\documentclass[11pt]{amsart}

\usepackage{epsfig,overpic,comment}
\usepackage[usenames,dvipsnames,svgnames,table]{xcolor}
\usepackage[hyphens]{url}
\usepackage[pagebackref,linktocpage=true,colorlinks=true,linkcolor=Blue,citecolor=BrickRed,urlcolor=RoyalBlue]{hyperref}
\usepackage[msc-links,abbrev]{amsrefs}
\usepackage{amsmath,amsthm,amssymb,marginnote,upgreek}
\usepackage{enumerate}
\usepackage[T1]{fontenc}

\newtheorem{theorem}{Theorem}[section]

\newtheorem{lemma}[theorem]{Lemma}
\newtheorem{proposition}[theorem]{Proposition}

\theoremstyle{definition}
\newtheorem{note}[theorem]{Note}

\newcommand{\be}{\begin{equation}}
\newcommand{\ee}{\end{equation}}
\newcommand{\ol}{\overline}

\newcommand{\R}{\mathbf{R}}
\newcommand{\RP}{\mathbf{RP}}
\newcommand{\C}{\mathcal{C}}
\newcommand{\G}{\mathcal{G}}

\newcommand{\CAT}{\textup{CAT}}

\renewcommand{\S}{\mathbf{S}}
\renewcommand{\epsilon}{\varepsilon}
\renewcommand{\tilde}{\widetilde}

\DeclareMathOperator{\conv}{conv}

\DeclareMathOperator{\Hol}{Hol}
\DeclareMathOperator{\length}{length}
\DeclareMathOperator{\graph}{graph}
\DeclareMathOperator{\epi}{epi}

\makeatletter
\DeclareFontFamily{U}{tipa}{}
\DeclareFontShape{U}{tipa}{m}{n}{<->tipa10}{}
\newcommand{\arc@char}{{\usefont{U}{tipa}{m}{n}\symbol{62}}}%

\newcommand{\arc}[1]{\mathpalette\arc@arc{#1}}

\newcommand{\arc@arc}[2]{%
  \sbox0{$\m@th#1#2$}%
  \vbox{
    \hbox{\resizebox{\wd0}{\height}{\arc@char}}
    \nointerlineskip
    \box0
  }%
}
\makeatother

\begin{document}

\title[Total absolute curvature and rigidity of surfaces] 
{Total absolute curvature and rigidity of surfaces\\in Cartan-Hadamard manifolds}

\author[M. Ghomi]{Mohammad Ghomi}
\address{School of Mathematics, Georgia Institute of Technology,
Atlanta, GA 30332}
\email{ghomi@math.gatech.edu}
\urladdr{www.math.gatech.edu/~ghomi}

\author[J.A. Hoisington]{Joseph Ansel Hoisington}
\address{Dept. of Mathematics, Rose-Hulman Inst. of Technology,
Terre Haute, IN 47803}
\email{jhoisingt@rose-hulman.edu}

\author[M. Raffaelli]{Matteo Raffaelli}
\address{School of Mathematics, Georgia Institute of Technology, Atlanta, GA 30332}
\email{raffaelli@math.gatech.edu}
\urladdr{https://matteoraffaelli.com}

\author[J.I. Stavroulakis]{John Ioannis Stavroulakis}
\address{School of Mathematics, Georgia Institute of Technology,
Atlanta, GA 30332}
\email{jstavroulakis3@gatech.edu}

\begin{abstract}
We show that closed surfaces with minimal total absolute curvature in Cartan-Hadamard 3-manifolds bound flat convex bodies. This generalizes Chern-Lashof's theorem for surfaces in Euclidean space and solves a problem posed by Gromov in 1985. Our proof is based on an isometric embedding construction via holonomy, and uses Pogorelov’s theory of surfaces with bounded extrinsic curvature. Along the way, we obtain a regularity result for convex hulls  and a Schur-type comparison theorem for 
  curves in Cartan-Hadamard manifolds.
\end{abstract}

\date{\today \,(Last Typeset)}

\makeatletter
\def\subjclassname{\textup{2020} Mathematics Subject Classification}
\makeatother
\subjclass[2020]{Primary: 53C20, 53C24; Secondary: 53C21, 52A15.}

\keywords{Tight surfaces, total curvature, Schur comparison theorem, bow lemma, bounded extrinsic curvature, semiconvex functions, tangent cone, majorization, holonomy.}

\thanks{The first-named author was supported by NSF grant DMS-2202337.}

\maketitle


\section{Introduction}
A Cartan-Hadamard manifold $M^n$ is a complete simply connected Riemannian $n$-space with nonpositive curvature. The \emph{total absolute curvature} of a  closed  hypersurface $\Gamma$ immersed in $M$ is defined as 
$$
\tilde{\G}(\Gamma):=\int_\Gamma|GK|,
$$
where $GK$ is the Gauss-Kronecker curvature of $\Gamma$. Chern and Lashof \cite{chern-lashof1957,cecil-chern1985} showed that when $M$ is the Euclidean space $\R^n$, $\tilde{\G}(\Gamma)\geq |\S^{n-1}|$ with equality only if $\Gamma$ is convex, where $|\S^{n-1}|$ is the volume of the unit sphere in $\R^n$. 
We extend this result  to Cartan-Hadamard $3$-manifolds, 
as proposed by Gromov \cite[p.\ 66(b)]{ballmann-gromov-schroeder}:

\begin{theorem}\label{thm:main}
Let $\Gamma\subset M^3$ be a smooth closed immersed surface. Then 
\be\label{eq:main}
\tilde{\G}(\Gamma)\geq 4\pi,
\ee
with equality only if $\Gamma$ bounds a flat convex body.
\end{theorem}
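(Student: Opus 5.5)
The inequality and the rigidity have different difficulty. I would prove the inequality first, with a Chern--Lashof type counting argument adapted to $M$, and then spend most of the effort on the equality case.

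\emph{The inequality.} Fix a point $o\in M$ and use the exponential map at $o$. Since $M$ is Cartan--Hadamard, $\exp_o$ is a diffeomorphism, and every unit vector $u\in T_oM$ determines a Busemann function $b_u$. Each $b_u$ is convex and $C^2$, with gradient of norm one. Restrict $b_u$ to $\Gamma$: for almost every $u$ this restriction is a Morse function, so it has at least two critical points, a maximum and a minimum. A critical point $p$ of $b_u|_\Gamma$ is a point where $\nabla b_u(p)$ is normal to $\Gamma$.

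This gives a map from the unit normal bundle $N\Gamma$ to $\S^2=UT_oM$. It sends $(p,\nu)$ to the $u$ whose horosphere through $p$ has normal $\nu$, i.e.\ the asymptotic direction of the geodesic ray from $p$ in direction $\nu$, transported to $o$. Every $u$ has at least two preimages. The step I expect to need care is the Jacobian comparison: I want $|\mathrm{Jac}|\le |GK|$ at each point. Near a critical point, $b_u|_\Gamma$ has Hessian $\mathrm{II}_\Gamma - \nabla^2 b_u$ (up to sign conventions), and $\nabla^2 b_u\ge 0$ because $b_u$ is convex. I would argue that the area distortion of the direction map at infinity is controlled by $\det(\mathrm{II}_\Gamma)$. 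This should follow from Jacobi field comparison along rays in nonpositive curvature, which says the visual map shrinks areas. Integrating over both normals then gives
\[
2\cdot 4\pi\le \int_{N\Gamma}|\mathrm{Jac}|\le 2\tilde{\G}(\Gamma),
\]
which is \eqref{eq:main}.

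\emph{Equality, first consequences.} If $\tilde{\G}(\Gamma)=4\pi$, the counting argument forces two things. Almost every $b_u$ has exactly two critical points on $\Gamma$, which is the analogue of tightness. Also, the Jacobian inequality is an equality wherever $K_\Gamma\ne 0$. I would first show that tightness for horofunctions implies $\Gamma$ is embedded and is the boundary of its convex hull. The argument: if some point of $\Gamma$ lay strictly inside the hull, a suitable $b_u$ would acquire an extra local maximum. This is where a regularity statement for convex hulls enters. Next, equality in the Jacobi comparison should force the sectional curvature of $M$ to vanish along the tangent planes of $\Gamma$ over the positively curved part $\Gamma^+$. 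The remaining part $\Gamma\setminus \Gamma^+$ has $GK=0$, so it should be ruled by segments.

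\emph{Flatness of the body, the main obstacle.} The hard step is to show that the whole convex body $D$ bounded by $\Gamma$ is flat, and not merely that $M$ is flat along its boundary. My plan is to compare $\Gamma$ with a model surface in $\R^3$. Use parallel transport (holonomy) along curves in $\Gamma$ to map $\Gamma$ into $\R^3$, building a surface $\Gamma'\subset\R^3$ whose metric and second fundamental form agree with those of $\Gamma$. Since the curvature of $M$ vanishes on tangent planes along $\Gamma$, the Gauss and Codazzi equations should hold for $\Gamma'$. The trouble is that this flatness is known only on $\Gamma^+$, and on the developable part the surface is only $C^{1,1}$-like. So I would use Pogorelov's theory of surfaces with bounded extrinsic curvature to realize $\Gamma'$ as a convex surface with total curvature $4\pi$ and the same intrinsic metric.

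Then I would apply a Schur/bow-lemma comparison. For a geodesic chord of $D$ with endpoints on $\Gamma$, the chord in $M$ is at least as long as the corresponding chord in $\R^3$, because $M$ is nonpositively curved. Conversely, the convexity of $\Gamma'$ gives the reverse inequality. So equality holds for all chords. Rigidity in the comparison theorem then makes every geodesic triangle spanned by points of $\Gamma$ flat, and filling $D$ with such triangles shows that $D$ is flat. I expect making the holonomy embedding well defined and of the right regularity to be the crux.
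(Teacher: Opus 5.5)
There is a genuine gap already in the inequality. The pointwise bound $|\mathrm{Jac}|\le|GK|$ for your map $N\Gamma\to S_oM$ is false. By your own Hessian computation, the differential of $(p,\nu)\mapsto u$ is governed by $\mathrm{II}_\Gamma-\nabla^2 b_u|_{T_p\Gamma}$, not by $\mathrm{II}_\Gamma$. The fact that $\nabla^2 b_u\ge 0$ does not make this determinant smaller than $\det\mathrm{II}_\Gamma$. For instance, let a surface in $\mathbf{H}^3$ contain a piece of a totally geodesic plane. There $GK=0$, but the normal geodesics diverge and your map is a local diffeomorphism onto an open subset of the sphere at infinity (in the ball model, the equatorial disc goes onto a hemisphere). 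The bound also fails globally. You claim $b_u|_\Gamma$ is Morse for almost every $u$, so the Morse inequalities would give at least $2+2g$ preimages. Hence $\tilde\G(\Gamma)\ge 2\pi(2+2g)$ for surfaces of genus $g$ in $\mathbf{H}^3$, which contradicts Solanes' examples cited at the end of the paper. The paper avoids horofunctions altogether. It passes to $\Gamma_0=\partial\conv(\Gamma)$ and uses Kleiner's observation $\G(\Gamma\cap\Gamma_0)=\G(\Gamma_0)$. It then applies the Gauss equation and Gauss--Bonnet on the $\C^{1,1}$ outer parallel surfaces, $\G(\Gamma_0^t)=4\pi-\int_{\Gamma_0^t}K(T_p\Gamma_0^t)\ge 4\pi$. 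This gives $\tilde\G(\Gamma)\ge\G_+(\Gamma)\ge\G(\Gamma_0)\ge 4\pi$.

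Your equality case rests on equality in this false comparison, and it has further gaps. First, horofunction tightness does not yield $\Gamma=\partial\conv(\Gamma)$. A point $p$ is a local maximum of some $b_u|_\Gamma$ only if $\mathrm{II}_\Gamma$ dominates $\nabla^2 b_u$ at $p$. In $\mathbf{H}^3$, $\nabla^2 b_u$ restricts to the identity on horosphere tangent planes, so a point with principal curvatures in $(-1,1)$ is never such a maximum. Parts of $\Gamma$ inside the hull can therefore be invisible to horospheres. Note that if this step did work, $\Gamma$ would be smooth and convex, and Gauss--Bonnet alone would give $K=0$ on all its tangent planes. The whole difficulty is that convexity of $\Gamma$ is not available a priori.

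Without it you get flatness only over $\Gamma^+$, and ``ruled by segments'' gives no information on $K$ elsewhere. But trivial holonomy (Lemma \ref{lem:holonomy}) needs $R(X,Y)Z=0$ at every point of the surface being developed. Pogorelov's theory cannot supply this; in the paper it only shows that the already constructed $\C^1$ development is convex. The missing idea is to develop $\Gamma_0$ rather than $\Gamma$. Equality forces $\int_{\Gamma_0^t}K(T_p\Gamma_0^t)\to 0$, hence $K=0$ on every tangent plane of $\Gamma_0$. These planes are well defined because $\Gamma_0$ is $\C^1$ by Proposition \ref{prop:convexhull}, via Borb\'ely's lemma. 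Convexity of $\Gamma$ comes only at the end, from Chern--Lashof in $\R^3$, once $\conv(\Gamma)$ is known to be flat.

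Finally, the chord inequality $|xy|\ge|x'y'|$ does not follow from $K\le 0$ and intrinsic isometry. A geodesic sphere of radius $r$ in $\mathbf{H}^3$ is intrinsically a round sphere of radius $\sinh r$, yet its diametral chord $2r$ is shorter than $2\sinh r$. The inequality requires Theorem \ref{thm:schur} applied to the convex planar sections of $\Gamma'$, using that the development preserves total curvature of curves. The reverse inequality comes from Reshetnyak majorization and the congruence of mutually majorizing convex curves. The extension to the body then uses the Lang--Schroeder--Kirszbraun argument of Proposition \ref{prop:nested}.
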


By \emph{smooth} we mean $\C^\infty$. A \emph{convex body} in $M$  is a compact convex subset with interior points, and  is called \emph{flat} if the (sectional) curvature $K$ of $M$ vanishes on it. Schroeder and Strake \cite{schroeder-strake1989a}  proved Theorem \ref{thm:main} for strictly convex $\Gamma$. See also  \cite{ghomi-spruck2023a} for a refinement of that result by the first-named author and Spruck which incorporates upper bounds on $K$. More recently, Theorem \ref{thm:main} was established  for simply connected $\Gamma$ \cite{ghomi2025-convexity}, via techniques from Alexandrov geometry  outlined by Petrunin \cite{petrunin2022}. Here we refine those methods and devise other tools to settle the general case.

To prove Theorem \ref{thm:main}, we study the boundary $\Gamma_0$ of the convex hull of $\Gamma$. Kleiner observed that
$\tilde{\G}(\Gamma)\geq \tilde{\G}(\Gamma_0)$
\cite{kleiner1992,ghomi-spruck2022}, while the Gauss-Bonnet theorem together with Gauss' equation yields
$\tilde{\G}(\Gamma_0)\geq 4\pi$. Thus  inequality
\eqref{eq:main} is obtained relatively quickly. The main difficulty is characterizing the equality case, which involves a blend of metric and Riemannian geometry techniques to overcome the low  regularity of $\Gamma_0$.

For the equality case in \eqref{eq:main}, we first show via a result of Borb\'ely
\cite{borbely1995} that $\Gamma_0$ 
is $\C^1$ (Proposition~\ref{prop:convexhull}). 
Gauss' equation then forces the ambient curvature $K$ to vanish on tangent planes of $\Gamma_0$. This yields a
parallel frame along $\Gamma_0$, which we use to construct a $\C^1$
isometric embedding $f\colon \Gamma_0\to\R^3$. 
We show that 
$f$ preserves the total curvature of curves, and use Pogorelov’s theory of surfaces with bounded extrinsic curvature \cite{pogorelov1973} to prove that 
$f(\Gamma_0)$ is convex (Proposition~\ref{prop:C11}). 
Next we extend $f$ isometrically to the
entire convex hull (Proposition~\ref{prop:nested}) by combining the
techniques of \cite{ghomi2025-convexity}, namely the Kirszbraun-Lang-Schroeder extension  \cite{lang-schroeder1997} and Reshetnyak majorization \cite{reshetnyak1968} in $\CAT(0)$ spaces, with a new Schur-type
comparison theorem for $\C^1$ curves (Theorem~\ref{thm:schur}). The problem is
thereby reduced to the Euclidean case covered by 
Chern-Lashof's theorem.

Theorem~\ref{thm:main} naturally raises the question of whether the same phenomenon
persists in higher dimensions \cite[p.~66]{ballmann-gromov-schroeder}. In particular,
does the inequality $\tilde{\G}(\Gamma)\geq |\S^{n-1}|$ hold for convex hypersurfaces
$\Gamma\subset M^n$? This is a long-standing problem \cite{willmore-saleemi} which remains open for $n\geq 4$. An affirmative answer would imply the
Cartan--Hadamard conjecture on the extension of the classical isoperimetric
inequality to spaces of nonpositive curvature. See \cite{ghomi-spruck2022, kloeckner-kuperberg2019,ritore2023} for
background and references, and \cite{ghomi-spruck2023c,ghomi-stavroulakis2026,ghomi-stavroulakis2026b}
for more recent studies.

\section{Regularity of the Convex Hull}\label{sec:CH}
Throughout this work $M^n$ denotes an $n$-dimensional Cartan-Hadamard manifold, unless noted otherwise. Every pair of points of $M$ may be joined by a unique geodesic.
A subset of $M$ is \emph{convex} if it contains the geodesic connecting each pair of its points. The \emph{convex hull} of a set $X\subset M$, denoted by $\conv(X)$, is the intersection of all closed convex sets which contain $X$. We say that $\conv(X)$ is of regularity class $\C^{k,\alpha}$  if its boundary $\partial\conv(X)$ is a $\C^{k,\alpha}$ hypersurface. It is known that the convex hull of a closed $\C^{1,1}$ hypersurface in $\R^n$ is $\C^{1,1} $\cite[Note 6.8]{ghomi-spruck2022}; however, this fact has not been established in $M$.  The following weaker result will be sufficient for us:

\begin{proposition}\label{prop:convexhull}
Let $\Gamma\subset M^3$ be a closed topologically immersed surface that is differentiable at each point.  Then $\conv(\Gamma)$ is $\C^1$.
\end{proposition}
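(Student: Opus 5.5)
Let $C:=\conv(\Gamma)$ and $\Gamma_0:=\partial C$. The plan is to show that every point $p\in\Gamma_0$ has a unique supporting plane, that is, the tangent cone $T_pC$ is a closed half-space of $T_pM$. For a convex body with nonempty interior in a Riemannian manifold, this is equivalent to $\Gamma_0$ being a $\C^1$ surface. First I will check that $C$ is compact with interior points. Compactness is standard in Cartan-Hadamard manifolds, since $C$ lies in any geodesic ball containing $\Gamma$. Interior points exist because a closed surface cannot lie in a totally geodesic surface-like set of dimension $\le 2$ while bounding; alternatively, if $C$ had empty interior the claim is vacuous or reduces to a degenerate case I would exclude.

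I then split into two cases. \emph{Case 1: $p\in\Gamma_0\cap\Gamma$.} Since $\Gamma$ is differentiable at $p$, it has a tangent plane $T_p\Gamma$, and $\Gamma\subset C$. So every supporting half-space $H$ of $C$ at $p$ (in the sense of the tangent cone, $T_pC\subset H$) contains the tangent cone of $\Gamma$ at $p$, which is the plane $T_p\Gamma$. A half-space containing a $2$-plane through the origin in $\R^3$ is bounded by that plane. Hence $H$ is unique.

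\emph{Case 2: $p\in\Gamma_0\setminus\Gamma$.} Here I invoke Borb\'ely's result~\cite{borbely1995}. Away from the set $X=\Gamma$, the boundary of the convex hull is locally ruled or ``flat-like'': each such $p$ lies in the relative interior of a geodesic segment, or of a geodesic triangle, with endpoints in $\Gamma\cap\Gamma_0$, contained in $\Gamma_0$. More precisely, $p$ lies in the convex hull of finitely many points of $\Gamma$ lying on $\Gamma_0$. Given such a segment $\sigma\subset\Gamma_0$ through $p$ with endpoints $a,b\in\Gamma$, any supporting half-space at $p$ must contain $\pm\sigma'(p)$. I would then transport support information along $\sigma$: the set of supporting hyperplanes of $C$ along the interior of a segment in $\partial C$ is ``constant'' up to parallel transport, using a Reshetnyak/comparison-type argument, or the fact that a local supporting function (a convex function vanishing on $\sigma$) has support set independent of the interior point. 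At the endpoint $a\in\Gamma$ the support plane is unique by Case 1. By semicontinuity of supporting planes, any support plane at $p$ yields, after transport and a limit toward $a$, a support plane at $a$, and hence is determined.

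The main obstacle is this last step. In nonpositive curvature, unlike $\R^3$, supporting planes along a boundary segment need not be parallel-transports of each other, so the ``constancy along segments'' argument is not immediate. My first attempt would be the following. Given two distinct supporting half-spaces at $p$, take their intersection, which is a wedge containing $T_pC$. Using Borb\'ely's local description that $C$ near $p$ is bounded by the level set of a convex function, I would push this wedge singularity along $\sigma$ to show the edge-type singularity persists up to $a$. This would contradict differentiability of $\Gamma$ at $a$, via the argument of Case 1 applied to points of $\Gamma$ near $a$ together with upper semicontinuity of tangent cones of convex sets. If $p$ instead lies in the interior of a geodesic triangle, the same argument applies along a segment through $p$ toward a vertex.
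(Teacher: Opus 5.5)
Your Case~1 is correct and is the paper's argument. Case~2, however, has a genuine gap, and it occurs in two places.

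First, the structural statement you attribute to Borb\'ely is not what he proves, and it is not available in a general Cartan--Hadamard manifold. There is no Carath\'eodory-type description of convex hulls in $M$: the union of geodesics joining a vertex to the opposite side of a triangle is in general not convex, and $\conv(\Gamma)$ is only the closure of iterated geodesic joins. So you cannot assume that every $p\in\Gamma_0\setminus\Gamma$ lies in a segment or ``triangle'' with vertices in $\Gamma$, or in the hull of finitely many points of $\Gamma$. What Borb\'ely's lemma (Lemma~\ref{lem:borbely}) actually gives is a dichotomy: if no geodesic with endpoints in $\Gamma$ passes through $p$, then $T_p\Gamma_0$ is already flat. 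The only remaining case is that $p$ is an interior point of a geodesic $\alpha\subset C$ with an endpoint $q\in\Gamma$. Your triangle case never arises.

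Second, for that remaining case you do not supply an argument. As you note, support planes do not propagate along $\alpha$ by parallel transport. Pushing the wedge along $\sigma$ and invoking semicontinuity of tangent cones does not close the gap either. The point $q$ is at a fixed distance from $p$, and even if every interior point of $\alpha$ were non-smooth, the two support directions could merge in the limit at $q$. Without quantitative control, which you do not have, semicontinuity yields nothing.

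The missing idea is to linearize at $p$ rather than transport along $\alpha$:
\begin{enumerate}
\item Let $\ol C:=\exp_p^{-1}(C)$. Since $C$ is convex, $\ol C$ is star-shaped about $0$, so $\ol C\subset T_p\ol C$, and $T_p\ol C$ is a convex cone.
\item If $T_pC$ is not a half-space, pick two distinct planes $\ol H,\ol H'\subset T_pM$ supporting $T_p\ol C$. Because $\ol C\subset T_p\ol C$, they support all of $\ol C$.
\item The segment $\ol\alpha:=\exp_p^{-1}(\alpha)$ is straight, lies in $\ol C$, and has $0$ in its interior. Hence it lies in $\ol H\cap\ol H'$.
\item Therefore $H:=\exp_p(\ol H)$ and $H':=\exp_p(\ol H')$ are smooth complete surfaces through $q$, each with $C$ on one side. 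They are transversal at $q$ because $\exp_p$ is a diffeomorphism.
\item Consequently $q\in\partial C$, and $T_qC$ lies in the wedge bounded by the distinct planes $T_qH$ and $T_qH'$. That wedge cannot contain the plane $T_q\Gamma$, contradicting your Case~1.
\end{enumerate}

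Two smaller points. The implication ``unique support plane everywhere $\Rightarrow$ $\C^1$'' needs an argument in $M$; the paper obtains it from a local semiconvex graph representation together with $T_p(\partial C)=\partial(T_pC)$. Interior points of $C$ follow from invariance of domain: a compact convex set of dimension at most $2$ is a point, a segment, or a disk, and none of these contains a topologically immersed closed surface.
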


The conditions  above mean that $\Gamma$ is the image of a locally one-to-one continuous map $f\colon\ol\Gamma\to M$, for a closed $2$-manifold $\ol\Gamma$, and $f$ is differentiable at each point.
The proof uses basic facts from the theory of tangent cones \cite{ghomi-howard2014,ghomi-spruck2022}. For any set $X\subset\R^n$ and $p\in X$, the \emph{tangent cone} $T_p X$ of $X$ at $p$ is  the limit of all secant rays which emanate from $p$ and pass through a sequence of points of $X\setminus\{p\}$ converging to $p$. For a set $X\subset M$ and $p\in X$, the tangent cone is defined as
$
T_p X:= T_p\big(\exp_p^{-1}(X)\big)\subset T_p M,
$
where $\exp_p\colon T_p M\to M$ is the exponential map. We say that $T_p X$ is \emph{flat} if it is a hyperplane. A point of $X$ is \emph{extreme} if it lies on $\partial\conv(X)$. Since tangent cones of a differentiable surface are flat, the above proposition follows at once from:

\begin{theorem}\label{thm:convexhull}
Let $X\subset M^3$ be a compact set. Suppose that $\conv(X)$ has interior points and $T_p X$ is flat for all extreme points $p\in X$. Then $\conv(X)$ is  $\C^1$.
\end{theorem}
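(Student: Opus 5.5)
We will show that every point of $\partial\conv(X)$ admits a unique supporting plane. For a convex body in a $3$-manifold, a boundary that is locally the graph of a convex-type (semiconcave) function with a unique supporting direction at each point is $\C^1$. So it suffices to prove that the tangent cone $T_q\conv(X)$ is a half-space for every $q\in\partial\conv(X)$. Write $C:=\conv(X)$ and fix $q\in\partial C$. There are two cases: $q$ is an extreme point of $X$, or $q\notin X$.

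\emph{Case 1: $q\in X$.} Here $T_qX\subset T_qC$. Since $C$ is convex with interior points, $T_qC$ is a closed convex cone contained in a half-space $H$ bounded by a supporting plane. The cone $T_qX$ is a plane contained in $T_qC\subset H$, so it must coincide with $\partial H$. Any other supporting half-space $H'$ would also contain this plane, forcing $H'=H$. Hence the supporting plane at $q$ is unique, namely $T_qX$.

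\emph{Case 2: $q\notin X$.} We use the structure theory of convex hulls in Cartan–Hadamard manifolds. The boundary $\partial C$ away from $X$ is a locally convex surface with zero extrinsic curvature in the sense that it contains no ``strictly convex'' points; in fact each such $q$ lies in the relative interior of a geodesic segment or a geodesic triangle with vertices in $X$ lying in $\partial C$. In $\R^3$ this is Carath\'eodory's theorem. In $M^3$ we would invoke Borb\'ely's result \cite{borbely1995}, as indicated in the introduction, which gives an analogous local description: near $q$, $\partial C$ is foliated or ruled by geodesic segments whose endpoints lie in $X$, or $q$ lies on a flat face.

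Suppose two distinct supporting planes $P_1\neq P_2$ exist at $q$. Then $T_qC$ is contained in a wedge, and $q$ lies on a ``ridge''. Take a geodesic segment $\sigma\subset\partial C$ through $q$ with endpoints $a,b\in X$. We want to show that at the endpoints the non-uniqueness propagates. The supporting planes at $q$ give, by parallel transport along $\sigma$ combined with the CAT(0) comparison (supporting totally-geodesic-like barriers given by convex sublevel sets of distance functions), supporting cones at $a$ or $b$ that are again wedges. This would contradict Case 1. Concretely, we would try the following: the ridge at $q$ means that $C$ lies in the intersection of two convex ``half-spaces'' determined by Busemann or distance functions tangent to $P_1,P_2$ at $q$. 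If the ridge direction is transverse to $\sigma$, the endpoint $a$ also sees a nondegenerate wedge, so $T_aC$ is not a half-space. If the ridge direction is parallel to $\sigma$, we instead use a second segment or triangle through nearby points to reach a contradiction. For flat faces (triangles), we use the same argument with the vertices.

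The main obstacle is Case 2, in particular making the propagation of the ridge from $q$ to an endpoint rigorous in a curved ambient space, where there are no affine half-spaces. We expect to rely on Borb\'ely's local structure theorem together with a limiting argument: blow up at $q$ so that $\exp_q^{-1}(C)$ converges to $T_qC$, apply the Euclidean Carath\'eodory argument in the limit cone, and check that flatness of $T_pX$ at the extreme points passes to the limit. The final step is routine: uniqueness of supporting planes at every boundary point of a convex body gives continuity of the normal, hence $\C^1$ regularity.
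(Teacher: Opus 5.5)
Your reduction to uniqueness of supporting planes and your Case~1 match what the paper does (Lemmas~\ref{lem:convexhull}, \ref{lem:boundary-tangent-cone} and the first half of its proof). Case~2, however, has a genuine gap.

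First, the ``structure theory'' you attribute to Borb\'ely is not what his result says. The Carath\'eodory-type description you rely on (every $q\in\partial C\setminus X$ lies in the relative interior of a geodesic segment or geodesic triangle with vertices in $X$) is not available in a general Cartan--Hadamard manifold, where convex hulls are not unions of geodesic simplices. Lemma~\ref{lem:borbely} is instead a dichotomy: if no geodesic with endpoints in $X$ passes through $q$, then $T_q(\partial C)$ is already flat. So the only case left is that $q$ is an interior point of a geodesic $\sigma$ with endpoints $a,b\in X$. No triangles or flat faces ever arise.

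In that remaining case you never supply the propagation mechanism, and your case split is inverted. Because $C$ is convex and $M$ is Cartan--Hadamard, $\bar C:=\exp_q^{-1}(C)$ is star-shaped about $0$ and hence contained in the convex cone $T_qC$. Suppose $T_qC$ had two distinct supporting planes $\bar H\neq\bar H'$. Then $\exp_q^{-1}(\sigma)\subset\bar C\subset T_qC$ is a line segment with $0$ in its relative interior, lying in both half-spaces bounded by $\bar H$ and $\bar H'$. So it must lie in $\bar H\cap\bar H'$.

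Thus the ``ridge transverse to $\sigma$'' case cannot occur, and the ``parallel'' case you defer to unspecified nearby segments is the only one. It is settled directly. The sets $H:=\exp_q(\bar H)$ and $H':=\exp_q(\bar H')$ are complete surfaces with $C$ on one side of each. Both contain all of $\sigma$, and they are transversal along $\sigma$ since $\exp_q$ is a diffeomorphism. At the endpoint $a$ this produces two distinct supporting half-spaces for $T_aC\supset T_aX$. Hence $a\in\partial C$ and $T_aX$ is not flat, a contradiction.

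No parallel transport or Busemann functions are needed. Indeed, the regions cut out by Busemann functions that contain $C$ are complements of horoballs, which are not convex. Your blow-up fallback cannot work either: since $q\notin X$ and $X$ is closed, $X$ vanishes in the blow-up at $q$, so the hypothesis on the cones $T_pX$ leaves no trace in the limit.
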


A \emph{convex hypersurface} $\Gamma\subset M$ is the boundary of a \emph{convex body}, i.e., a compact convex set with interior points. 
It is well known that a convex hypersurface in $\R^n$ is $\C^1$ if its  tangent cones are flat \cite{ghomi-howard2014}. We check that this fact holds in Cartan-Hadamard manifolds as well  via semiconvex functions \cite{cannarsa-sinestrari2004}. A function $f\colon\Omega\subset\R^{n-1}\to\R$ is \emph{semiconvex} provided that there exists $A\geq 0$ such that $x\mapsto f(x) +A|x|^2$ is convex.

\begin{lemma}\label{lem:convexhull}
A convex hypersurface $\Gamma\subset M^n$ with flat tangent cones is $\C^1$.
\end{lemma}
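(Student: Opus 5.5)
The plan is to work locally in normal coordinates and write $\Gamma$ as the graph of a semiconvex function. I will then use the standard fact that a semiconvex function is $\C^1$ exactly when its subdifferential is a single point everywhere. Finally I will identify the subdifferential with the tangent cone of $\Gamma$.

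\emph{Step 1 (local semiconvex graph).} Fix $p\in\Gamma$ and let $B$ be the convex body bounded by $\Gamma$. Choose an interior point $o$ of $B$ near $p$ and a small ball $U$ around $p$. Work in the chart $\exp_p^{-1}$ near $p$, and choose linear coordinates $(x,t)\in\R^{n-1}\times\R$ on $T_pM$ so that the $t$-axis points roughly from $p$ toward $o$. Because $B$ contains a small geodesic ball around $o$ and is convex, each geodesic from points near $p$ to that ball lies in $B$. Hence, near $p$, $\exp_p^{-1}(B)$ is the epigraph of a function $u$ over a neighborhood $\Omega\subset\R^{n-1}$ of $0$, and $u$ is Lipschitz by the usual cone condition. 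Next I show $u$ is semiconvex. In this chart geodesics of $M$ are $\C^\infty$ curves with uniformly bounded second derivatives: they are solutions of the geodesic ODE with Christoffel symbols vanishing at $0$. So for $x_0,x_1\in\Omega$, the geodesic joining $(x_i,u(x_i))$ deviates from the straight segment by at most $C|x_1-x_0|^2$. Convexity of $B$ then gives
$$u\big((1-s)x_0+sx_1\big)\le (1-s)u(x_0)+s u(x_1)+C s(1-s)|x_1-x_0|^2.$$
This is the midpoint-type semiconvexity inequality, so $u+A|x|^2$ is convex for suitable $A$. I expect this step to be the main technical point. The estimate must be uniform in the pair of points, and the geodesic's $t$-direction deviation has to be measured properly, which uses the Lipschitz bound to keep the slopes of the relevant geodesics bounded.

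\emph{Step 2 (tangent cone vs.\ subdifferential).} For the convex function $v=u+A|x|^2$, the tangent cone of $\epi(v)$ at $(x,v(x))$ is the epigraph of the directional derivative $v'(x;\cdot)$. This is a sublinear function, and it is linear iff $\partial v(x)$ is a singleton. The quadratic term does not change the tangent cone, since it affects only second order. So the tangent cone of $\epi(u)$ at $(x,u(x))$ is the epigraph of $u'(x;\cdot)=v'(x;\cdot)-2A\langle x,\cdot\rangle$. Applying this at an arbitrary point $q\in\Gamma$ near $p$ requires relating $T_q\Gamma$, which is defined via $\exp_q^{-1}$, to the cone computed in the chart $\exp_p^{-1}$. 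The transition map $\exp_p^{-1}\circ\exp_q$ is a diffeomorphism near $q$. Tangent cones are mapped by its differential, which is a linear isomorphism, so flatness is preserved. Thus $T_q\Gamma$ flat implies the chart tangent cone is a hyperplane, hence $u'(x;\cdot)$ is linear, hence $u$ is differentiable at $x$.

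\emph{Step 3 (conclusion).} A convex function differentiable at every point of an open set is automatically $\C^1$, because subgradients of convex functions are upper semicontinuous. The same holds for $u=v-A|x|^2$. Therefore $\Gamma$ is locally a $\C^1$ graph in a smooth chart, and so $\Gamma$ is a $\C^1$ hypersurface.
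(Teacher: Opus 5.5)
Your proof is correct and follows essentially the same route as the paper. You write $\Gamma$ locally as the graph of a semiconvex function, transport flatness of tangent cones through the chart diffeomorphism, and deduce differentiability at every point; the Lipschitz bound excluding vertical hyperplanes is implicit in your identification of the graph's tangent cone with the graph of $u'(x;\cdot)$, i.e.\ the boundary of the epigraph's cone as in Lemma~\ref{lem:boundary-tangent-cone}. You then upgrade to $\C^1$ via semiconvexity, and the only real difference is that you sketch the local semiconvex-graph representation and the ``differentiable semiconvex implies $\C^1$'' step yourself, where the paper cites Ghomi--Spruck and Cannarsa--Sinestrari.
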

\begin{proof}
Each point of $\Gamma$ may be covered by a coordinate chart $(U,\phi)$ of $M$ such that $\phi(U\cap\Gamma)=\graph(f)$ for a semiconvex function $f\colon \Omega\subset\R^{n-1}\to\R$  \cite[Lem.\ 6.2]{ghomi-spruck2022}. The differential $d\phi$ maps tangent cones of $\Gamma\cap U$ to those of $\graph(f)$ and is a linear isomorphism at each point. 
Thus the tangent cones of $\operatorname{graph}(f)$ are flat. Since semiconvex functions
are locally Lipschitz, these hyperplanes cannot be vertical. Hence each tangent
cone of $\operatorname{graph}(f)$ is the graph of a linear function, and therefore $f$ is
differentiable at every point. Since $f$ is semiconvex, it follows that it is $\C^1$ \cite[Prop.\ 3.3.4]{cannarsa-sinestrari2004}.
\end{proof}

We also need the following result, which again follows from its analogue in $\R^n$ \cite[Lem.\ 5.7]{ghomi-howard2014}  via the same reduction technique used above.

\begin{lemma}\label{lem:boundary-tangent-cone}
Let $C\subset M^n$ be a convex body, and let $p\in \partial C$. Then
$
T_p(\partial C)=\partial(T_pC).
$
\end{lemma}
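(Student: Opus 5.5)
The plan is to reduce to the Euclidean statement \cite[Lem.\ 5.7]{ghomi-howard2014} by the same chart argument used in Lemma~\ref{lem:convexhull}. Fix $p\in\partial C$. As in that proof, we choose a coordinate chart $(U,\phi)$ of $M$ around $p$. Any diffeomorphism will do for tangent cones, since its differential is a linear isomorphism carrying tangent cones to tangent cones. The convenient choice is $\phi=\exp_p^{-1}$ restricted to $U$. Then by definition $T_pC=T_0(\exp_p^{-1}(C))$ and $T_p(\partial C)=T_0(\exp_p^{-1}(\partial C))$. Moreover $\exp_p^{-1}(\partial C\cap U)=\partial(\exp_p^{-1}(C\cap U))\cap \exp_p^{-1}(U)$, because $\exp_p$ is a homeomorphism. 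Tangent cones are local, so it suffices to prove the identity for $\tilde C:=\exp_p^{-1}(C)\cap B$, where $B$ is a small ball around $0$ in $T_pM\cong\R^n$.

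The main obstacle is that $\tilde C$ is not convex in $\R^n$, so Lemma 5.7 of \cite{ghomi-howard2014} does not apply verbatim. I would handle this in the way \cite[Lem.\ 6.2]{ghomi-spruck2022} handles it. Near $p$, after a rotation, $\partial\tilde C$ is the graph of a semiconvex function $f$ over a neighborhood $\Omega$ of $0$ in $\R^{n-1}$, with $f(0)=0$ and $\tilde C$ lying on the epigraph side $\epi(f)$. Set $g(x):=f(x)+A|x|^2$, which is convex. The map $\psi(x,t)=(x,t+A|x|^2)$ is a diffeomorphism with $d\psi_0=\mathrm{id}$, and it sends $\graph(f)$ to $\graph(g)$ and $\epi(f)$ to $\epi(g)$. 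The set $\epi(g)$ is convex near $0$. Hence the Euclidean lemma gives $T_0(\graph g)=\partial T_0(\epi g)$. Since $d\psi_0$ is the identity, it preserves tangent cones, so $T_0(\graph f)=\partial T_0(\epi f)$. Finally, $\tilde C$ agrees with $\epi(f)$ near $0$, which gives $T_0(\partial\tilde C)=\partial(T_0\tilde C)$. Transporting back by $\exp_p$ completes the argument.

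The one point to verify is that a diffeomorphism $\psi$ with $\psi(0)=0$ satisfies $T_0(\psi(X))=d\psi_0(T_0X)$. This is standard: secant directions $(\psi(x_k)-\psi(0))/|\psi(x_k)-\psi(0)|$ converge to the normalized image under $d\psi_0$ of the limiting secant directions of $x_k$, using $\psi(x)=d\psi_0x+o(|x|)$ and the invertibility of $d\psi_0$. Because $d\psi_0$ is a homeomorphism, it also commutes with taking the topological boundary of a cone. This justifies each transfer step above.
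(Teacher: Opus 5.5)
Your proposal is correct and takes essentially the same route as the paper. You write $\partial C$ locally as the graph of a semiconvex $f$, apply the shear $(x,t)\mapsto(x,t+A|x|^2)$ (whose differential at $0$ is the identity) to pass to the convex epigraph of $g=f+A|x|^2$, and then invoke \cite[Lem.\ 5.7]{ghomi-howard2014}. The only differences are cosmetic: you fix the chart to be $\exp_p^{-1}$, which makes the match with the definition of $T_p$ immediate, and you spell out the diffeomorphism-invariance of tangent cones, which the paper uses implicitly.
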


\begin{proof}
Let $(U,\phi)$ be a coordinate chart of $M$ around $p$ such that 
$
\phi(U\cap \partial C)=\graph(f),
$
for a semiconvex function $f\colon \Omega\subset \R^{n-1}\to \R$. Suppose  $\phi(p)=0=(0,f(0))$.
It is enough to show
$$
T_{0}\big(\graph (f)\big)=\partial\bigl(T_{0}(\epi (f))\bigr),
$$
where $\epi(f)$ stands for the epigraph of $f$.
Since $f$ is semiconvex, there exists $A\geq 0$ such that
$
g(x):=f(x)+A|x|^2
$
is convex near $0$. Define $F\colon\R^{n-1}\times\R\to\R^n$ by 
$
F(x,t):=(x,\; t+A|x|^2).
$
Then $F$ is a local diffeomorphism near $0$, with $F(0)=0$ and
$dF_{0}=I$, which maps $\graph(f)$ to $\graph(g)$ and $\epi(f)$ to $\epi(g)$. Thus
$$
T_{0}\big(\graph (f)\big)=T_{0}\big(\graph (g)\big), \qquad \partial\big(T_{0}(\epi (f))\big)=\partial\big(T_{0}(\epi (g))\big).
$$
But $\epi (g)$ is a convex set in $\R^n$ which is bounded by $\graph (g)$ near $0$. So by  \cite[Lem.\ 5.7]{ghomi-howard2014},
$
T_{0}(\graph (g))=\partial(T_{0}(\epi (g))),
$
which completes the proof.
\end{proof}

Finally we need the following result of Borb\'{e}ly \cite[Lem.\ 2.3]{borbely1995}:

\begin{lemma}[\cite{borbely1995}]\label{lem:borbely}
Let $X\subset M^3$ be a compact set, and $p\in\partial\conv(X)$. Suppose that there are no geodesics of $M$ with endpoints on $X$ which pass through $p$. Then $T_p (\partial\conv(X))$ is flat.
\end{lemma}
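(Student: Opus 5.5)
The plan is to argue by contradiction. Write $C:=\conv(X)$. By Lemma~\ref{lem:boundary-tangent-cone}, $T_p(\partial C)=\partial(T_pC)$, so it suffices to show that the closed convex cone $T_pC\subset T_pM$ is a half-space. It has interior points and is contained in a supporting half-space, since $p\in\partial C$. Note that $p\notin X$, because a point of $X$ is a degenerate geodesic with endpoints on $X$. Since $X$ is compact, $\delta:=d(p,X)>0$.

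Suppose $T_pC$ is not a half-space. Since $T_pM$ is $3$-dimensional, there are two cases. Either (a) $T_pC$ is pointed, i.e.\ contains no line, or (b) $T_pC$ is a wedge bounded by two half-planes meeting along a line $L$ at an angle $<\pi$. In case (a), the dual cone has interior, so there is a unit vector $u$ with $\langle v,u\rangle\geq c|v|$ for all $v\in T_pC$ and some $c>0$. In normal coordinates $v=\exp_p^{-1}(x)$, define
$$
h(x):=\langle v,u\rangle - A|v|^2 .
$$
For $A$ large, $h$ is concave on a small ball $B$ about $p$, since the Hessian of the coordinate-linear part is bounded there. On $C\cap B$ we have $h\geq c|v|-A|v|^2-o(|v|)$, which is positive away from $p$ near $\partial B$. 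So for small $\epsilon>0$ the set
$$
C':=\big(C\setminus B\big)\cup\big(C\cap B\cap\{h\geq\epsilon\}\big)
$$
differs from $C$ only in a small neighborhood of $p$ disjoint from $X$. Moreover, $C'$ excludes $p$, is closed and connected, and is locally convex: near the cut it is locally an intersection of $C$ with the convex superlevel set of a concave function. By the local-to-global principle for closed connected locally convex sets in a Cartan-Hadamard manifold, $C'$ is convex. Since $C'\supset X$ and $C'\subsetneq C$, this contradicts the minimality of $\conv(X)$.

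In case (b), the idea is to use the hypothesis on geodesics. Let $\gamma$ be the geodesic through $p$ tangent to $L$. First I would show that a neighborhood of $p$ in $\gamma$ lies in $C$. If it did not, the same concave cut-off, now using $h=\langle v,u\rangle-A|v|^2+B\,\mathrm{dist}$-type corrections transverse to $L$, would shave off a cap. Then I would consider the maximal segment $\sigma\subset\gamma\cap C$ containing $p$. If both endpoints lay in $X$, this would contradict the hypothesis. So some endpoint $q$ of $\sigma$ is not in $X$. At $q$ the tangent cone $T_qC$ is a wedge or pointed cone that does not contain the continuation of $\gamma$ beyond $q$. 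I would then run a case (a)-type cut-off at $q$, with $h$ chosen increasing along $\gamma$ into $C$, and propagate it along $\sigma$ using compactness and the positive distance of $\sigma\setminus X$ from $X$. This would remove a thin neighborhood of $\sigma$ near its free end while preserving local convexity, again contradicting minimality.

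I expect case (b) to be the main obstacle. The difficulty is that the edge of the wedge gives no strictly supporting direction at $p$, so the concave cut-off must be built along a whole geodesic segment, and the bound $\angle<\pi$ must be kept uniform along $\sigma$. This is exactly where the dimension $3$ and the no-geodesic hypothesis enter. It is also the point at which I would follow Borb\'ely's original construction most closely.
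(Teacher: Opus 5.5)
The paper does not prove this lemma; it quotes it from Borb\'ely \cite[Lem.~2.3]{borbely1995}, so your argument has to stand on its own. Your case (a) is sound. The inclusion $\exp_p^{-1}(C)\subset T_pC$ gives $h\ge c|v|-A|v|^2$ on $C$ exactly. For large $A$, $h$ is concave near $p$, because the squared distance from $p$ has Hessian at least $2$ when $K\le 0$. The local-to-global step holds because an intrinsic shortest path in the closed, connected, locally convex set $C'$ is a local geodesic of $M$, hence the geodesic.

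However, case (a) is valid in every dimension and never uses the hypothesis on geodesics. It only shows that pointed cones cannot occur at points of $\partial C\setminus X$. The lemma fails in $M^4$ (see the Note after the proof of Theorem~\ref{thm:convexhull}). So its substance is a genuinely three-dimensional argument excluding non-pointed, non-flat cones, i.e.\ your case (b). There you use $n=3$ only to classify cones, and the steps you outline have a genuine gap.

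\emph{First}, the dichotomy ``either a neighborhood of $p$ in $\gamma$ lies in $C$, or a cap can be shaved off'' is asserted, not proved.
\begin{itemize}
\item Since $u\perp L$, the linear part of $h$ vanishes on $L$. Positivity of $h$ on $C$ near $L$ must therefore come from the rate at which $\exp_p^{-1}(C)$ leaves $L$, and this may be slower than quadratic.
\item Meanwhile, the Hessian of $\langle\exp_p^{-1}(\cdot),u\rangle$ is of order $|v|$ and in general indefinite, with nonzero entries involving the direction of $L$. Absorbing it with a term like $-A|v|^2$ kills positivity along $L$.
\item The transverse corrections you mention are not shown to overcome this.
\item In $M^3$ the dichotomy is itself a consequence of the lemma: apply the lemma to $\overline{C\setminus B_\rho(p)}$ and use that a wedge contains no half-space. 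So it is a substantial part of what must be proved, not a preliminary.
\end{itemize}

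\emph{Second}, the free-endpoint step is incorrect as stated.
\begin{itemize}
\item $\sigma$ passes through $p$ in the direction of $L$, which lies in every support plane at $p$. Hence $\sigma\subset\partial C$, and $-\gamma'(q)\in T_q(\partial C)=\partial(T_qC)$ by Lemma~\ref{lem:boundary-tangent-cone}.
\item So if $T_qC$ is a half-space, which nothing rules out, it contains $\gamma'(q)$, and no case (a) cut-off exists at $q$.
\item Even when $T_qC$ is a wedge not containing $\gamma'(q)$, local information at $q$ cannot yield a cut-off. The hypothesis concerns only geodesics through $p$, and $q$ may lie on another geodesic with endpoints in $X$. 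Then $q$ belongs to every convex set containing $X$.
\item This already happens in $\R^3$. Let $X$ be the vertices of a tetrahedron $abcd$ and $q$ the midpoint of $ab$. The segment $cq$ is maximal on its line, and $T_qC$ is a wedge excluding its continuation, yet $q$ cannot be removed.
\item Finally, if the other endpoint of $\sigma$ lies in $X$, then $\sigma\setminus X$ is not at positive distance from $X$. So the proposed propagation along $\sigma$ has no uniform room to work with.
\end{itemize}
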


Now we are ready to establish the main result of this section:

\begin{proof}[Proof of Theorem \ref{thm:convexhull}]
Let $C:=\conv(X)$.
By Lemma \ref{lem:convexhull}, it suffices to show that $T_p(\partial C)$ is flat for all $p\in\partial C$. 

First suppose that $p\in\partial C\cap X$.  Since $C$ is convex, $T_p C$ is a proper convex cone \cite[Prop.\ 1.8]{cheeger-gromoll1972}. Furthermore, $T_pX$ is flat by assumption. Since $T_pX\subset T_p C$, it follows that $T_p C$ is one of the two closed half-spaces of $T_pM$ bounded by $T_p X$. Hence $T_pX=\partial(T_pC)$. But, by Lemma \ref{lem:boundary-tangent-cone}, $\partial (T_pC)=T_p(\partial C)$. Thus $T_p(\partial C)=T_pX$. In particular $T_p(\partial C)$ is flat, as desired.

It remains to consider the case where $p\in\partial C\setminus X$. By Lemma \ref{lem:borbely}, we may suppose that there exists a geodesic, say $\alpha$, which passes through $p$ with endpoints $q$, $q'\in X$.  Assume towards a contradiction that $T_p(\partial C)$ is not flat. 

Set $\ol C:=\exp_p^{-1}(C)$. Then again $T_p\ol C$ is a proper convex cone which 
contains $\ol C$ \cite[Prop.\ 1.8]{cheeger-gromoll1972}, and $T_p(\partial C)=\partial (T_p C)=\partial (T_p\ol C)$ by Lemma \ref{lem:boundary-tangent-cone} and the definition of tangent cone. Thus if $T_p(\partial C)$ is not flat, $\partial (T_p\ol C)$ is not flat either. Consequently there exist two distinct planes $\ol H$, $\ol H'\subset T_pM$ which support $\ol C$ at $p$. Then $H:=\exp_p(\ol H)$, $H':=\exp_p(\ol H')$  are complete surfaces in $M$ with respect to which $C$ lies on one side. 

Since $\alpha$ is a geodesic in $C$,  $\ol\alpha:=\exp_p^{-1}(\alpha)$ is a line segment in $\ol C$ passing through $p$.  It follows that $\ol\alpha\subset\ol H\cap\ol H'$, which yields  $\alpha\subset H\cap H'$. In particular $q\in H\cap H'$. Since $\ol H$ and $\ol H'$ are distinct, they are transversal along $\ol\alpha$. Thus, since $\exp_p$ is a diffeomorphism, $H$ and $H'$ are transversal along $\alpha$ and in particular at $q$. So $X$ admits distinct support surfaces at $q$. Hence $q\in\partial C$ and $T_qX$ is not flat, which is a contradiction.
\end{proof}

\begin{note}
The proof of Theorem \ref{thm:convexhull} carries over to higher dimensions
except for Lemma \ref{lem:borbely}. Borb\'ely \cite[p.\ 13]{borbely1995} constructed a counterexample to the
analogue of that lemma in $M^4$. However, his example
involves a set $X$ which does not have flat tangent cones, and therefore
does not rule out  higher-dimensional analogues of Theorem \ref{thm:convexhull}. See Lytchak and Petrunin \cite{lytchak-petrunin2022} for a recent study of the boundary structure of convex bodies in generic Riemannian manifolds.
\end{note}

\section{Isometric Embedding}
Let $\gamma\colon [0,\ell]\to M$ be a $\C^1$ unit speed curve.  Parallel transporting all tangent vectors $\gamma'(t)$ to $\gamma(0)$ along $\gamma$, we obtain a curve in the unit sphere $S_{\gamma(0)}M\subset T_{\gamma(0)}M$. The \emph{total curvature} $\tau(\gamma)$ may be defined as  the length of that curve.  If $\gamma$ is $\C^{1,1}$, then 
$$
\tau(\gamma)=\int_0^\ell\kappa\, dt,
$$ 
where $\kappa$ is the geodesic curvature.
Thus $\tau(\gamma)$ reduces to the usual notion of total curvature when $\gamma$ is sufficiently regular. In this section we show:

\begin{proposition}\label{prop:C11}
Let $\Gamma\subset M^3$ be a $\C^{1}$ convex surface. Suppose that
$K$ vanishes on tangent planes of $\Gamma$. Then there exists a $\C^{1}$ convex isometric  embedding $f\colon\Gamma\to \R^3$ that preserves the total curvature of all $\C^1$ curves in $\Gamma$.
\end{proposition}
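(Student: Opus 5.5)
The plan is to build $f$ by integrating a parallel frame along $\Gamma$. This uses the vanishing of $K$ on tangent planes to make the ambient connection restricted to $\Gamma$ flat. Convexity of the image then comes from Pogorelov's theory.

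\textbf{Step 1 (pointwise algebra).} For $p\in\Gamma$, view the curvature operator $\mathcal R_p$ of $M$ as a symmetric endomorphism of $\Lambda^2T_pM$, a $3$-dimensional space. Since $n=3$, every bivector is decomposable, so $K\le 0$ says $\mathcal R_p\le 0$ as a quadratic form. The hypothesis $K(T_p\Gamma)=0$ says $\langle \mathcal R_p(X\wedge Y),X\wedge Y\rangle=0$ for a basis $X,Y$ of $T_p\Gamma$. A negative semidefinite form vanishes on a vector only if that vector lies in its kernel, so $\mathcal R_p(X\wedge Y)=0$. Hence $R(X,Y)Z=0$ for all $X,Y\in T_p\Gamma$ and $Z\in T_pM$. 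In other words, the pullback of the Levi-Civita connection to the bundle $TM|_\Gamma$ is flat.

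\textbf{Step 2 (global parallel frame and the map).} The surface $\Gamma$ bounds a convex body, so it is a topological sphere and hence simply connected. Parallel transport along $\C^1$ curves in $\Gamma$ is defined, since it is a linear ODE with continuous coefficients. I will show that holonomy around loops in $\Gamma$ is trivial.

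The main technical point is that $\Gamma$ is only $\C^1$, so Stokes/Ambrose--Singer cannot be applied directly. I would handle this as follows.
\begin{enumerate}[(i)]
\item Write $\Gamma$ locally as a $\C^1$ graph over a disk in normal coordinates.
\item Approximate a small loop by its images under mollified graphs $\Gamma_\epsilon$, which are smooth.
\item On $\Gamma_\epsilon$, estimate the holonomy by $\int_{D}|R|_{T\Gamma_\epsilon}|$. This integrand tends to $0$ uniformly, because $T\Gamma_\epsilon\to T\Gamma$ uniformly and $R$ vanishes on $T\Gamma$.
\item Parallel transport depends continuously on the curve in the $\C^1$ topology, so the holonomy on $\Gamma$ itself is trivial for small loops.
\item Since $\Gamma$ is simply connected, this local triviality gives trivial holonomy globally.
\end{enumerate}

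This yields a parallel orthonormal frame $E_1,E_2,E_3$ of $TM$ along $\Gamma$. Define the $\R^3$-valued $1$-form $\theta(v)=(\langle v,E_i\rangle)_i$ on $\Gamma$. It is closed: for smooth surfaces, $d\theta^i(X,Y)=\langle [X,Y]-\nabla_XY+\nabla_YX,E_i\rangle=0$ by torsion-freeness and $\nabla E_i=0$. For $\Gamma$ itself I would get closedness by the same approximation argument, phrased as vanishing of $\oint_\gamma\theta$ over small loops. Then set $f(p):=\int_{\gamma}\theta$ along any $\C^1$ path $\gamma$ from a base point to $p$.

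\textbf{Step 3 (isometry and total curvature).} Since $df_p=\theta_p$ is the restriction of the linear isometry $T_pM\to\R^3$, $v\mapsto(\langle v,E_i\rangle)$, the map $f$ is a $\C^1$ isometric immersion. Let $\gamma$ be a $\C^1$ curve in $\Gamma$. Parallel transport to $\gamma(0)$ followed by the fixed isometry at $\gamma(0)$ carries $\gamma'(t)$ to $(f\circ\gamma)'(t)$, because the $E_i$ are parallel. Thus the tangent indicatrices of $\gamma$ and $f\circ\gamma$ differ by a fixed rotation, so $\tau(f\circ\gamma)=\tau(\gamma)$.

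\textbf{Step 4 (convexity, the main obstacle).} I expect this to be the hardest step, since $f(\Gamma)$ is merely a $\C^1$ immersed sphere. The normal map of $f(\Gamma)$ is $\nu=(\langle N,E_i\rangle)$, where $N$ is the outer normal of $\Gamma$ in $M$. The plan has four parts.
\begin{enumerate}[(i)]
\item Show $f(\Gamma)$ has bounded extrinsic curvature with nonnegative sign. Convexity of $\Gamma$ in $M$, via the semiconvex local graphs of Lemma~\ref{lem:convexhull}, should imply that $\nu$ is locally orientation-preserving and monotone. Equivalently, the total curvature of normal sections is controlled, and by Step 3 this transfers to $f(\Gamma)$.
\item Invoke Pogorelov: a $\C^1$ surface of bounded extrinsic curvature with no negative-curvature part is locally convex.
\item Conclude with a Hadamard/van Heijenoort-type theorem: a closed, locally convex, immersed surface in $\R^3$ is the boundary of a convex body.
\item Deduce that $f$ is an embedding.
\end{enumerate}
The delicate point is (i), namely making the sign of the "extrinsic curvature measure" of $f(\Gamma)$ precise from the convexity of $\Gamma$ in $M$. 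I would first try comparing $\nu$ with the Gauss map of $\Gamma$ through the parallel frame, and using Gauss' equation to see that the intrinsic curvature measure is nonnegative.
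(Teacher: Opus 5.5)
\textbf{Steps 1--3 are fine; the gap is Step~4(i).} Steps 1--3 are sound and track the paper. Your curvature-operator argument is a self-contained proof of Lemma~\ref{lem:R}, and approximating by mollified graphs plays the same role as the paper's smooth approximating homotopies in Lemma~\ref{lem:holonomy}. The trouble is Step~4(i), which is the heart of the proposition and which you leave at ``should imply.'' Two ingredients are missing from it.

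First, bounded extrinsic curvature of $f(\Gamma)$ means that $\nu'\circ f=(\langle N,E_i\rangle)$ is BV. $N$ is BV in local coordinates because $\Gamma$ is locally a semiconvex $\C^1$ graph. However, pairing a BV function with a merely continuous one need not give a BV function, and Step~2 only gives you a continuous frame. The paper proves that the parallel frame is actually $\C^1$ (Lemma~\ref{lem:ei}), by extracting continuous partial derivatives of its coefficients from the transport ODE along coordinate lines. This is not automatic, since a function that is $\C^1$ along every $\C^1$ curve need not be $\C^1$.

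Second, and more seriously, nothing in your plan produces the sign. Preservation of total curvature is an unsigned statement, and $f$ need not carry normal sections to normal sections, so Step~3 says nothing about $\sigma^-$. Saying that $\nu'$ is ``locally orientation-preserving and monotone'' just restates what has to be proved. Your fallback, nonnegativity of the intrinsic curvature, is insufficient on its own for a $\C^1$ surface: by Nash--Kuiper, intrinsic curvature places no constraint on extrinsic shape. The missing idea is Pogorelov's generalized Theorema Egregium: for a $\C^1$ surface of bounded extrinsic curvature, $\sigma^+=\omega^+$ and $\sigma^-=\omega^-$. Combined with $\omega\ge0$ this gives $\sigma^-=0$, and only then does your step~(ii) apply.

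Moreover, $\omega\ge0$ cannot be read off from Gauss' equation on the $\C^1$ surface $\Gamma$ itself. The paper gets it from smooth convex approximations $\Gamma_i\to\Gamma$ in $\C^1$ (Lemma~\ref{lem:Gamma-i}), which satisfy $K_{\Gamma_i}\ge K(T\Gamma_i)\ge-\epsilon_i\to0$. It then uses stability of Alexandrov lower curvature bounds under Gromov--Hausdorff limits.

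\textbf{An alternative route.} Your idea of comparing normals through the parallel frame could probably be made rigorous at the level of measures. Once $E_i$ is known to be $\C^1$, one should have $D\langle N,E_i\rangle=\langle\nabla N,E_i\rangle$ as measures. The measure-valued second fundamental form of $f(\Gamma)$ would then coincide with that of $\Gamma$, which is positive semidefinite by convexity. A $\C^1$ local graph with semidefinite distributional Hessian is convex, so this would bypass Pogorelov entirely. As written, however, neither argument is carried out, and Step~4(i) remains an unproved assertion.
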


In the case where $\Gamma$ is $\C^3$, the above result was established in \cite[Prop.\ 2.1]{ghomi2025-convexity} via the Gauss-Codazzi equations. The $\C^1$ case is considerably more involved. First we show that $TM$ has no holonomy along $\Gamma$  due to the condition on $K$ (Section \ref{subsec:holonomy}). Consequently there exists a parallel $\C^1$ frame on $\Gamma$ (Section \ref{subsec:frame}). Using this frame we construct a $\C^1$ isometric immersion $f\colon\Gamma\to\R^3$ which preserves the total curvature of curves (Section \ref{subsec:immersion}). Finally we use intrinsic and extrinsic curvature measures due to Alexandrov-Zalgaller and Pogorelov to show that $f$ is a convex embedding (Section \ref{subsec:convex}).

\subsection{Trivial holonomy}\label{subsec:holonomy}
Let $\nabla$ be the Levi-Civita connection on $M$ and $R$ be the Riemann curvature operator given by
$
R(X,Y)Z:=\nabla_X\nabla_Y Z-\nabla_Y\nabla_X Z-\nabla_{[X,Y]}Z,
$
for vector fields $X$, $Y$, $Z$ on $M$. 
The vanishing of $K$ on tangent planes of $\Gamma$, together with the fact that $K\leq 0$ in $M$, yields the following stronger curvature condition which was established in \cite[Lem.\ 2.2]{ghomi2025-convexity}:

\begin{lemma}[\cite{ghomi2025-convexity}]\label{lem:R}
For all $p\in\Gamma$, $X$ and $Y\in T_p\Gamma$, and $Z\in T_pM$, $R(X,Y)Z=0$.
\end{lemma}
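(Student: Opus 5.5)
The plan is to use the nonpositivity of sectional curvature together with the algebraic symmetries of the curvature tensor. Fix $p\in\Gamma$ and write $T_pM=T_p\Gamma\oplus\R\nu$, with $\nu$ a unit normal. By hypothesis the sectional curvature vanishes on $T_p\Gamma$, so $\langle R(X,Y)Y,X\rangle=0$ for all $X,Y\in T_p\Gamma$. Polarizing this identity (the Riemann tensor restricted to a $2$-plane is determined by its sectional curvature), we get $\langle R(X,Y)Z,W\rangle=0$ for all $X,Y,Z,W\in T_p\Gamma$. Since $R(X,Y)$ is skew-adjoint, this reduces the claim to showing $\langle R(X,Y)Z,\nu\rangle=0$ for $X,Y,Z\in T_p\Gamma$. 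The case $Z=\nu$ then follows: $\langle R(X,Y)\nu,\nu\rangle=0$ by skew-symmetry, and $\langle R(X,Y)\nu,W\rangle=-\langle R(X,Y)W,\nu\rangle$.

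The key step is a first-variation argument exploiting $K\le 0$. For $X,Y\in T_p\Gamma$ and $s\in\R$, consider the plane spanned by $X$ and $Y+s\nu$. Its unnormalized sectional curvature is
\[
q(s):=\langle R(X,Y+s\nu)(Y+s\nu),X\rangle\le 0 .
\]
Expanding gives $q(s)=q(0)+2s\langle R(X,Y)\nu,X\rangle+s^2\langle R(X,\nu)\nu,X\rangle$, using the pair symmetry of $R$. Here $q(0)=0$ and $q\le0$ for all $s$, so the linear coefficient must vanish. Hence $\langle R(X,Y)X,\nu\rangle=0$ for all $X,Y\in T_p\Gamma$. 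If $X,Y$ are dependent, the plane is degenerate and the inequality holds trivially, so there is no issue there.

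It remains to upgrade $\langle R(X,Y)X,\nu\rangle=0$ to the trilinear vanishing of $T(X,Y,Z):=\langle R(X,Y)Z,\nu\rangle$. The form $T$ is antisymmetric in $X,Y$ and satisfies the first Bianchi identity, and we have just shown $T(X,Y,X)=0$. Polarizing in $X$ gives $T(X,Y,Z)+T(Z,Y,X)=0$, so $T$ is antisymmetric in its first and third slots as well as its first two. It is therefore totally antisymmetric. Bianchi then gives $3T(X,Y,Z)=0$. (Alternatively, observe that an alternating $3$-form on the $2$-dimensional space $T_p\Gamma$ vanishes, which avoids Bianchi altogether.)

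I expect the main obstacle, mild as it is, to be the sign argument that kills the mixed term. This is the only place where $K\le0$ is used, and one must take care that the normalization of the sectional curvature does not affect the sign. The remaining steps are routine symmetry manipulations.
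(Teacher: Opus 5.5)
Your proof is correct and complete. The expansion of $q(s)$ via pair symmetry is right, and the sign argument killing the linear term is sound. In fact $q(s)\le 0$ holds for every $X,Y\in T_p\Gamma$ and $s\in\R$, since $q(s)$ is either $0$ or $K$ times a positive Gram determinant, so you need no case distinction. Polarizing $T(X,Y,X)=0$ and then applying Bianchi (or the vanishing of alternating $3$-forms on a plane) finishes the argument.

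The paper does not prove this lemma itself; it imports it from \cite[Lem.~2.2]{ghomi2025-convexity}, so there is no in-text proof to match.

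For comparison, dimension three allows a one-line alternative. Every element of $\Lambda^2T_pM$ is decomposable, so $K\le 0$ says exactly that the curvature operator $\mathcal{R}\colon\Lambda^2T_pM\to\Lambda^2T_pM$ is negative semidefinite. Then $\langle\mathcal{R}(X\wedge Y),X\wedge Y\rangle=0$ forces $\mathcal{R}(X\wedge Y)=0$, i.e.\ $R(X,Y)Z=0$ for all $Z$. That route is shorter but uses $n=3$ essentially, since for $n\ge 4$ the condition $K\le 0$ no longer implies $\mathcal{R}\le 0$. Your first-variation argument, with the Bianchi version of the last step, works verbatim for hypersurfaces in any dimension on which $K$ vanishes on all tangent $2$-planes.
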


This condition yields that $\Gamma$ is holonomically trivial in $M$. More precisely, for any pair of points $p$, $q\in M$ and $\C^1$ curve $\gamma$ connecting $p$ to $q$, let $P^\gamma_{p,q}\colon T_pM\to T_qM$ denote the isomorphism given by parallel translation along $\gamma$. Then we have the following result. The proof would have been quick if $\Gamma$ were smooth (see Note \ref{note:AS}), but the $\C^1$ case requires more care.

\begin{lemma}\label{lem:holonomy}
Let $p$, $q\in\Gamma$ and $\gamma_1$, $\gamma_2$ be $\C^1$ curves in $\Gamma$ connecting $p$ to $q$. Then 
$$
P^{\gamma_1}_{p,q}=P^{\gamma_2}_{p,q}.
$$
\end{lemma}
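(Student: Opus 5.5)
We must show that parallel transport along curves in $\Gamma$ depends only on the endpoints. The plan is to reduce to a local statement and then use that $\Gamma$ is topologically a sphere. Since $\Gamma$ is the boundary of a convex body in $M$, it is homeomorphic to $\S^2$: radial projection from an interior point $o$ gives a homeomorphism $\Gamma\to S_oM$. Hence $\Gamma$ is simply connected. It therefore suffices to show that the holonomy $P^{\gamma}_{p,p}$ around any $\C^1$ loop $\gamma$ in $\Gamma$ is trivial.

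We first check that holonomy is locally trivial. Around each point of $\Gamma$ we use a chart in which $\Gamma$ is the graph of a $\C^1$ function $h$ over a disk $\Omega\subset\R^2$, as in the proof of Lemma~\ref{lem:convexhull}. Consider the $\C^1$ map $F(x)=(x,h(x))$. We cannot differentiate the connection twice along $F$, so the smooth argument (computing $[\partial_1,\partial_2]$ and applying Lemma~\ref{lem:R}) does not apply directly. Instead we mollify: let $h_\epsilon:=h*\rho_\epsilon$, which is smooth and satisfies $h_\epsilon\to h$ in $\C^1$ on compact subsets. On the smooth graphs $\Gamma_\epsilon$ the standard formula applies. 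For a small square loop $\sigma$ in $\Omega$, the holonomy of $F_\epsilon\circ\sigma$ differs from the identity by a quantity controlled by
$$
\int_{\text{square}}|R(dF_\epsilon e_1,dF_\epsilon e_2)|.
$$
Here we use the Ambrose--Singer type estimate obtained by integrating the curvature over the spanned surface, via Gronwall applied to the parallel transport ODE. Since $R$ is continuous on $M$, $dF_\epsilon\to dF$ uniformly, and $R(dF e_1,dF e_2)=0$ by Lemma~\ref{lem:R}, this integral tends to $0$.

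Parallel transport along $\C^1$ curves depends continuously on the curve in the $\C^1$ topology, because it solves a linear ODE whose coefficients depend on $\gamma$ and $\gamma'$. Therefore the holonomy of $F\circ\sigma$ is the limit of the holonomies of $F_\epsilon\circ\sigma$, and it is the identity. Subdividing any square into small squares then gives trivial holonomy around every loop of the form $F\circ\sigma$ in a chart. For a general $\C^1$ loop in $F(\Omega)$, we write it as $F\circ\beta$ and homotope $\beta$ in the convex set $\Omega$ to a point via straight-line homotopy, applying the same mollification argument to the $\C^1$ map $(s,t)\mapsto F_\epsilon(\beta_s(t))$.

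We then pass to the global statement. Take $\gamma_1,\gamma_2$ from $p$ to $q$. Since $\Gamma$ is simply connected, there is a continuous homotopy between them rel endpoints. Cover its image by finitely many graph charts with the Lebesgue number property, and replace the intermediate curves by piecewise-$\C^1$ curves that are concatenations of chords in chart coordinates; this is valid because holonomy is defined for piecewise-$\C^1$ curves. Each elementary move in the resulting discretized homotopy changes the curve only inside a single chart, so by local triviality it does not change parallel transport. This yields $P^{\gamma_1}_{p,q}=P^{\gamma_2}_{p,q}$.

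The main obstacle is the local step, specifically justifying the curvature-integral bound for holonomy along the mollified surfaces uniformly in $\epsilon$. That bound needs only $\C^1$ control of $F_\epsilon$ together with continuity of $R$ and of the Christoffel symbols, so the convergence argument should go through.
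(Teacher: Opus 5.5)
Your proof is correct, and its analytic core matches the paper's. You smooth the relevant map, bound the holonomy defect by the integral of $|R|$ over a spanned two-parameter family, and send this to $0$ using Lemma~\ref{lem:R} and continuity of $R$. You then pass to the limit using $\C^1$-continuity of parallel transport.

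The difference is in how you globalize. The paper does it in one step. It takes a $\C^1$ homotopy $H\colon[0,1]^2\to\Gamma$ rel endpoints from $\gamma_1$ to $\gamma_2$ and approximates it in $\C^1$ by smooth homotopies $H^k$ in $M$. It then bounds $|P^{\gamma_2^k}_{p,q}-P^{\gamma_1^k}_{p,q}|$ by $\sup|R(H^k_t,H^k_s)|\to 0$.

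You first prove trivial holonomy for loops inside a single graph chart, where the smoothing is explicit: you mollify $h$ and use the straight-line homotopy in the convex domain $\Omega$. You then run the standard ``locally flat plus simply connected implies trivial holonomy'' argument through a discretized homotopy.

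Your route buys the following:
\begin{itemize}
\item you need only a continuous homotopy in $\Gamma$;
\item you never have to produce a global $\C^1$ homotopy inside the merely $\C^1$ surface $\Gamma$;
\item you never need smooth approximations of such a homotopy with prescribed boundary curves, which the paper uses without further comment.
\end{itemize}
The price is bookkeeping in the global step. You must choose the chords so that every elementary loop lies in one chart image, including loops formed by a subarc of $\gamma_i$ and its chord. You must also allow piecewise $\C^1$ loops in the local statement.

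Two small remarks. First, your square-loop step is redundant. The integral $\int_Q|R(dF_\epsilon e_1,dF_\epsilon e_2)|\to 0$ for every compact $Q\subset\Omega$, not just small ones, and the straight-line homotopy step handles all loops anyway. Second, in that step $H(s,t):=F_\epsilon(\beta_s(t))$ is only $\C^1$, so the commutation formula $\nabla_t\nabla_s V=\nabla_s\nabla_t V+R(H_t,H_s)V$ needs a word of justification. It does hold: $\beta_s(t)$ is affine in $s$, so the mixed partials of $H$ exist, are continuous, and agree. Alternatively, approximate $\beta$ by smooth loops first.
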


\begin{proof}
Since $\Gamma$ is simply connected,
there exists a $\C^1$ homotopy $H \colon [0,1]^2 \to \Gamma$ such that
$$
H(0,t)=\gamma_1(t), \qquad H(1,t)=\gamma_2(t), \qquad
H(s,0)=p, \qquad H(s,1)=q.
$$
Choose smooth curves $\gamma_i^k \colon [0,1]\to M$ with endpoints $p,q$ such that
$\gamma_i^k \to \gamma_i$ in $\C^1$.  There are smooth homotopies
$H^k \colon [0,1]^2 \to M$ such that
$$
H^k(0,t)=\gamma_1^k(t),\quad H^k(1,t)=\gamma_2^k(t), \quad H^k(s,0)=p,\quad H^k(s,1)=q,
$$
and $H^k \to H$ in $\C^1$. Set $H^k_t:=\partial_t H^k$ and $H^k_s:=\partial_sH^k$.
Pick a vector $V_0$ in the unit sphere $S_pM\subset T_pM$. For each $k$, let $V_k(s,t)$ be the parallel translate of $V_0$
along the curve $\alpha(t):=H^k(s,t)$. Then $\nabla_tV_k:=\nabla_{H_t^k}V_k=0$.
Since $H^k$ is smooth, so is $V_k$. Set $W_k=\nabla_sV_k:=\nabla_{H_s^k}V_k$.
Then
$$
\nabla_tW_k
=
\nabla_t\nabla_sV_k
=
\nabla_s\nabla_tV_k+R(H_t^k,H_s^k)V_k
=
R(H_t^k,H_s^k)V_k,
$$
since $[H_t^k,H_s^k]=0$. Furthermore, 
since $V_k(s,0)=V_0$, we have $W_k(s,0)=0$. 
Let
$
\ol W_k(t):=P^{\,\alpha}_{\alpha(t),\alpha(1)}W_k(s,t)\in T_{\alpha(1)}M.
$
Then
$
\ol W_k'(t)=P^{\,\alpha}_{\alpha(t),\alpha(1)}(\nabla_tW_k(s,t)).
$
Thus
$$
W_k(s,1)
=\ol W_k(1)-\ol W_k(0)
=\int_0^1 \ol W_k'(t)dt
=
\int_0^1
P^{\,\alpha}_{\alpha(t),\alpha(1)}\big(\nabla_t W_k(s,t)\big)\,dt.
$$
Hence, since parallel translation is an isometry,
$$
|W_k(s,1)|
\le
\int_0^1 |\nabla_tW_k(s,t)|\,dt
\le
\sup_{(s,t)\in[0,1]^2}\sup_{Z\in S_pM}|R(H_t^k,H_s^k)Z|=:C_k.
$$
Now, for every unit vector $V_0\in S_p M$, we obtain the uniform estimate
$$
\left|P^{\gamma_2^k}_{p,q}(V_0)-P^{\gamma_1^k}_{p,q}(V_0)\right|
=
|V_k(1,1)-V_k(0,1)|
\le
\int_0^1|W_k(s,1)|\,ds
\le
C_k.
$$
Since $H^k\to H$ in $\C^1$, we have $H_t^k\to H_t$ and $H_s^k\to H_s$ uniformly.
Furthermore, by Lemma~\ref{lem:R}, $R(H_t,H_s)Z=0$. So, by continuity of $R$,
$C_k\to 0$.
Consequently, the operator norm
$
|P^{\gamma_2^k}_{p,q}-P^{\gamma_1^k}_{p,q}|\to 0.
$
Since, by basic ODE theory, parallel transport depends continuously on the curve in the $\C^1$-topology, we also have
$|P^{\gamma_i^k}_{p,q}-P^{\gamma_i}_{p,q}|\to 0$. Therefore
$$
\big|P^{\gamma_1}_{p,q}-P^{\gamma_2}_{p,q}\big|
\leq 
\big|P^{\gamma_1}_{p,q}-P^{\gamma_1^k}_{p,q}\big|
+
\big|P^{\gamma_1^k}_{p,q}-P^{\gamma_2^k}_{p,q}\big|
+
\big|P^{\gamma_2^k}_{p,q}-P^{\gamma_2}_{p,q}\big|\to 0,
$$
 which completes the proof.
\end{proof}

\begin{note}\label{note:AS}
When $\Gamma$ is smooth, Lemma \ref{lem:holonomy} follows quickly from the Ambrose-Singer holonomy theorem  \cite[Thm.\ 2]{ambrose-singer1953} \cite[Thm.\ 3.1.22]{ballmann2022}.
Consider the vector bundle
$TM|_\Gamma\to \Gamma$ with the connection induced from  $M$.  Let $\Hol_p^\Gamma(M)$ denote its holonomy group at $p\in\Gamma$. Since $\Gamma$ is simply connected, 
 the Ambrose-Singer theorem yields that the associated Lie algebra  is
generated by the endomorphisms
$$
P^{-\gamma}_{q,p}\circ R(X,Y)\circ P^{\,\gamma}_{p,q},
$$
where $\gamma$ is a $\C^1$ curve in $\Gamma$ from $p$ to another point $q\in \Gamma$, $-\gamma$ indicates the reverse parametrization of $\gamma$, and
$X,Y\in T_q\Gamma$. By Lemma \ref{lem:R}, $R(X,Y)Z=0$ for all $Z\in T_qM$. Thus $\Hol_p^{\Gamma}(M)$ is trivial. In particular, letting $\gamma:=\gamma_1\ast(-\gamma_2)$ be the concatenation of the curves  in Lemma \ref{lem:holonomy} completes the proof. The proof of Lemma \ref{lem:holonomy} basically confirms that the Ambrose-Singer theorem  holds in the $\C^1$ category, by extending  the standard argument with an approximation.
\end{note}

\subsection{The parallel frame}\label{subsec:frame}
A vector field $V\colon \Gamma\to TM$ is \emph{parallel} along $\Gamma$ provided that $\nabla_X V=0$ for all tangent vectors $X\in T\Gamma$. Using the triviality of the holonomy established above, we now construct a parallel frame on $\Gamma$. Since $\Gamma$ is only $\C^1$, it requires some care to show that the frame is $\C^1$ as well.

\begin{lemma}\label{lem:ei}
There exists a parallel orthonormal $\C^1$ frame field $(e_1,e_2,e_3)$ along $\Gamma$.
\end{lemma}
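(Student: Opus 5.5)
We will define the frame by parallel transport from a fixed base point. Fix $p_0\in\Gamma$ and an orthonormal basis $(v_1,v_2,v_3)$ of $T_{p_0}M$. For $q\in\Gamma$, set $e_i(q):=P^{\gamma}_{p_0,q}(v_i)$, where $\gamma$ is any $\C^1$ curve in $\Gamma$ from $p_0$ to $q$. Such curves exist because $\Gamma$, being a convex surface, is a topological sphere and in particular connected; as a $\C^1$ surface it is $\C^1$-path connected. By Lemma \ref{lem:holonomy} the definition does not depend on $\gamma$. Parallel transport is an isometry, so $(e_1,e_2,e_3)$ is orthonormal at every point. The frame is parallel along every $\C^1$ curve $\sigma$ in $\Gamma$ starting at $q$: concatenate a curve from $p_0$ to $q$ with $\sigma$ (after reparametrizing so the concatenation is $\C^1$, or simply working with piecewise $\C^1$ curves, to which Lemma \ref{lem:holonomy} extends by the same approximation argument). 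Then $e_i(\sigma(t))=P^{\sigma}_{q,\sigma(t)}e_i(q)$, which gives $\nabla_{\sigma'}e_i=0$. Taking $\sigma$ with arbitrary initial velocity $X\in T_q\Gamma$ yields $\nabla_Xe_i=0$, provided $e_i$ is differentiable, which is the remaining issue.

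The main obstacle is showing that $e_i$ is $\C^1$ as a map $\Gamma\to TM$, given that $\Gamma$ is only a $\C^1$ submanifold. We will work locally. Take a $\C^1$ parametrization $x\colon U\subset\R^2\to\Gamma$ around $q=x(0)$, and define the radial curves $\sigma_u(t):=x(tu)$ for $t\in[0,1]$. Then $e_i(x(u))=P^{\sigma_u}_{q,x(u)}e_i(q)$. In a coordinate chart of $M$, this is the value at $t=1$ of the solution of the linear ODE
$$
E'(t)=-\Gamma(x(tu))\big(dx_{tu}(u),E(t)\big),
$$
where $\Gamma(\cdot)$ denotes the Christoffel symbols. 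The coefficients are continuous in $(t,u)$, so the solution is continuous in $u$. This already gives continuity of $e_i$, but smooth dependence on parameters would need $\C^1$ coefficients in $u$, and $dx$ is only $C^0$. So we will not differentiate the ODE in $u$.

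Instead, we will compute directional derivatives directly from the parallel property along arbitrary curves. Along any $\C^1$ curve $\sigma$ in $\Gamma$, the coordinate expression $E(t)=e_i(\sigma(t))$ is a $\C^1$ function satisfying
$$
E'(t)=-\Gamma(\sigma(t))\big(\sigma'(t),E(t)\big).
$$
Hence the composite $\tilde e(u):=e_i(x(u))$ has, along every $\C^1$ curve $u(t)$ in $U$, a derivative of the form
$$
\frac{d}{dt}\tilde e(u(t))=-\Gamma(x(u))\big(dx_u(u'),\tilde e(u)\big).
$$
In particular the partial derivatives
$$
\partial_j\tilde e(u)=-\Gamma(x(u))\big(\partial_jx(u),\tilde e(u)\big)
$$
exist everywhere. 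The right-hand side is continuous in $u$, since $x$ is $\C^1$ and $\tilde e$ is continuous. A function with continuous partial derivatives is $\C^1$, so $\tilde e$ is $\C^1$ and the frame is $\C^1$. The formula for the partials also shows $\nabla_Xe_i=0$ for every $X\in T\Gamma$, which completes the plan.
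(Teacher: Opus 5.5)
Your proposal is correct and follows essentially the same route as the paper. You define the frame by parallel transport from a base point via Lemma \ref{lem:holonomy}, get continuity from continuous dependence of the transport ODE along the straight-line curves $t\mapsto x(tu)$, and then obtain continuous partial derivatives by applying the parallel-transport equation along the coordinate curves of the parametrization. Your remark that concatenations can be made $\C^1$ by reparametrizing to zero speed at the junction also addresses a point the paper covers only with the phrase ``by path independence.''
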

\begin{proof}
Fix $p_0\in\Gamma$, and choose an orthonormal basis $e_i$ for $T_{p_0} M$. For any point $p$ of $\Gamma$ we parallel translate $e_i$ to $p$ along a $\C^1$ curve in $\Gamma$ connecting $p_0$ to $p$. By Lemma \ref{lem:holonomy}, the choice of the curve is immaterial. Thus we obtain an orthonormal frame on $\Gamma$. 

By path independence, the restriction of $e_i$ to any $\C^1$ curve is obtained by parallel transport along that curve. Hence, for every $X\in T_p\Gamma$, if $\alpha$ is a curve with $\alpha'(0)=X$, then $\nabla_X e_i=0$. Thus $e_i$ is parallel on $\Gamma$.

It remains to show that $e_i$ is $\C^1$ on $\Gamma$. 
First note that since $e_i$ is parallel along $\C^1$ curves, it is the solution along those curves to an ODE with coefficients given by the Christoffel symbols, which are continuous. Hence $e_i$ is $\C^1$ along any $\C^1$ curve.

Let $\varphi\colon U\subset\R^2\to \Gamma$
be a local $\C^1$ parametrization, and  $x_i$ be local coordinates of $M$ on an open set containing $\varphi(U)$. We will show that $e_i\circ\varphi$ is $\C^1$. Fix $u_0\in U$, and for $u$ near $u_0$,
join $\varphi(u_0)$ to $\varphi(u)$ by the curve
$
t\mapsto \varphi((1-t)u_0+tu).
$
These curves depend continuously on $u$ in the $\C^1$ topology. Since $e_i\circ\varphi(u)$ is obtained by parallel transport
of  $e_i\circ\varphi(u_0)$ along these curves, $e_i\circ\varphi$
is continuous.
Now write (using Einstein's summation convention)
$$
e_i\circ\varphi(u)=a_{ij}(u)\,\partial_j\big|_{\varphi(u)},
$$
where $\partial_j=\partial/\partial x_j$.
Note that  $a_{ij}$ are continuous, because $e_i\circ\varphi(u)$ and $\partial_j\big|_{\varphi(u)}$ both depend continuously on $u$.
Let 
$
\gamma_k(t):=\varphi(u+te'_k)
$
be the coordinate curves on $\Gamma$, where $e'_k$ is the standard basis for $\R^2$. 
Since $e_i$ is $\C^1$ on $\gamma_k$, the functions $t\mapsto a_{ij}(u+te'_k)$ are $\C^1$. So we may compute that
$$
0=\nabla_{\gamma_k'(t)}e_i
=\nabla_{\gamma_k'(t)}
\Big(a_{ij}(u+te'_k)\,\partial_j\Big)
=\frac{d}{dt}a_{ij}(u+te'_k)\,\partial_j
+a_{i\ell}(u+te'_k)\,\nabla_{\gamma_k'(t)}\partial_\ell.
$$
Since
$
\gamma_k'=(d(x_m\circ \gamma_k)/dt)\partial_m
$
and
$
\nabla_{\partial_m}\partial_\ell=\Upgamma^j_{\ell m}\,\partial_j,
$
it follows that
$$
\frac{d}{dt}a_{ij}(u+te'_k)
=
-\Upgamma^j_{\ell m}(\gamma_k(t))
\frac{d(x_m\circ \gamma_k)}{dt}(t)\,
a_{i\ell}(u+te'_k).
$$
Evaluating at $t=0$, we obtain
$$
\partial_ka_{ij}(u)
=
-\Upgamma^j_{\ell m}(\varphi(u))
\,\partial_k(x_m \circ\varphi)(u)\,
a_{i\ell}(u).
$$
The right-hand side is continuous, since the Christoffel symbols are smooth,
$\varphi$ is $\C^1$, and $a_{i\ell}$ is continuous. Hence each $a_{ij}$ is $\C^1$, and
therefore $e_i$ is $\C^1$ as desired.
\end{proof}

There are functions which are $\C^1$ on every $\C^1$ curve  but are not $\C^1$ \cite[Thm.\ 3]{boman1967}. Thus the discussion in the second part of the above proof is not superfluous.

\subsection{Isometric immersion}\label{subsec:immersion}
Extend the frame $e_i$, given by Lemma \ref{lem:ei}, to a $\C^1$ orthonormal frame on an open neighborhood $U$ of $\Gamma$ in $M$. 
Let $\theta_i(\cdot):=\langle\cdot,e_i\rangle$ be the dual coframe. Since $e_i$ is $\C^1$ on $U$, so is $\theta_i$. Fix $p_0\in\Gamma$. For any $p\in\Gamma$ let $\gamma\colon[0,\ell]\to\Gamma$ be a $\C^1$ unit speed curve connecting $p_0$ to $p$. Then define $f=(f_1,f_2,f_3)\colon\Gamma\to\R^3$ by
$$
f_i(p):=\int_{\gamma}\theta_i.
$$
First we check that $f$ does not depend on the choice of $\gamma$. Note that for tangent vector fields $X$, $Y$ on $\Gamma$,
\begin{eqnarray*}
d\theta_i(X,Y) &=& X(\theta_i(Y))-Y(\theta_i(X))-\theta_i([X,Y])\\
&=& X\langle Y, e_i\rangle-Y\langle X, e_i\rangle-\langle[X,Y],e_i\rangle\\
&=& \langle \nabla_X Y, e_i\rangle+\langle Y,\nabla_Xe_i\rangle-\langle \nabla_Y X, e_i\rangle-\langle X,\nabla_Ye_i\rangle-\langle[X,Y],e_i\rangle\\
&=&\langle \nabla_X Y-\nabla_Y X-[X,Y],e_i\rangle=0.
\end{eqnarray*}
 Now let $\gamma_1$, $\gamma_2$ be $\C^1$ curves connecting $p_0$ to $p$. Since $\Gamma$ is simply connected, there exists a domain $\Sigma\subset\Gamma$, possibly with multiple components, such that $\partial\Sigma=\gamma_1\cup-\gamma_2$. So by Stokes' theorem $\int_{\gamma_1}\theta_i-\int_{\gamma_2}\theta_i=\int_\Sigma d\theta_i=0$, as desired. Next we check that
 $$
 df_i=\theta_i.
 $$
 Since $d\theta_i=0$, this would have followed from the Poincar\'{e} lemma if $\Gamma$ were smooth, but again we need a direct argument.
 Fix $p\in\Gamma$, and let $\sigma\colon(-\varepsilon,\varepsilon)\to\Gamma$ be a $\C^1$ curve with $\sigma(0)=p$. Choose a $\C^1$ curve $\gamma_0$ in $\Gamma$ connecting $p_0$ to $p$, and for each $t$ let $\gamma_t$ be the concatenation of $\gamma_0$ with $\sigma|_{[0,t]}$. Then, by path-independence,
$$
f_i\big(\sigma(t)\big)=\int_{\gamma_t}\theta_i
=\int_{\gamma_0}\theta_i+\int_{\sigma|_{[0,t]}}\theta_i.
$$
Thus
$
f_i(\sigma(t))-f_i(\sigma(0))
=\int_{\sigma|_{[0,t]}}\theta_i.
$
Now, since $\theta_i$ is continuous, the fundamental theorem of calculus yields that
$$
df_i\big(\sigma'(0)\big)=\frac{d}{dt}\Big|_{t=0} f_i\circ\sigma(t)=\theta_i(\sigma'(0)).
$$
So $df_i=\theta_i$, as claimed.
Using this property, we obtain the next two lemmas.
\begin{lemma}\label{lem:isometry}
$f$ is a $\C^1$ isometric immersion.
\end{lemma}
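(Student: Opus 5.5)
The plan is to use the identity $df_i=\theta_i$ just established, together with the fact that $(e_1,e_2,e_3)$ is orthonormal along $\Gamma$.

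\emph{Regularity.} Each $f_i$ is differentiable on $\Gamma$ with $df_i=\theta_i|_{T\Gamma}$. Since $\theta_i$ is continuous and $\Gamma$ is a $\C^1$ surface, $df_i$ is continuous. To make this precise, take a local $\C^1$ parametrization $\varphi\colon U\to\Gamma$. Then the partial derivatives of $f_i\circ\varphi$ are
$$
\partial_k(f_i\circ\varphi)=\theta_i(\partial_k\varphi)=\langle \partial_k\varphi, e_i\circ\varphi\rangle,
$$
and these are continuous in $u$ because $\partial_k\varphi$ and $e_i\circ\varphi$ are continuous by Lemma~\ref{lem:ei}. Continuous partial derivatives give $f_i\circ\varphi\in\C^1$, so $f$ is $\C^1$.

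\emph{Isometry.} For $X\in T_p\Gamma$ we have
$$
df_p(X)=\big(\langle X,e_1\rangle,\langle X,e_2\rangle,\langle X,e_3\rangle\big),
$$
which is the coordinate vector of $X$ in the orthonormal basis $(e_i(p))$ of $T_pM$. By Parseval,
$$
|df_p(X)|^2=\sum_i\langle X,e_i\rangle^2=|X|^2 .
$$
Hence $df_p$ is a linear isometry of $T_p\Gamma$ onto its image. In particular it is injective, so $f$ is an immersion, and it preserves the induced metric.

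I expect no real obstacle here. The only point needing care is that $f$ is merely $\C^1$, so the chain rule and the passage from directional derivatives along curves to a genuine differential must be justified. The coordinate computation above does this, since a function with continuous partial derivatives is $\C^1$.
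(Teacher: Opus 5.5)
Your proposal is correct and follows essentially the same argument as the paper: $\C^1$ regularity from $df_i=\theta_i$ with $\theta_i$ continuous, and the isometry property from the orthonormality of $(e_i)$. Your coordinate check via partial derivatives along coordinate curves, and your use of norm preservation (equivalent to the paper's inner-product identity by polarization), just make explicit what the paper states in one line.
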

\begin{proof}
Since 
$df_i=\theta_i$ and $\theta_i$ is continuous,  each $f_i$ is $\C^1$.  Moreover, for any $X,Y\in T_p\Gamma$,
$$
\big\langle df_p(X),df_p(Y)\big\rangle_{\R^3}
=df_i(X)\,df_i(Y)
=\theta_i(X)\,\theta_i(Y)
=\langle X,Y\rangle_M,
$$
since $e_i$ is an orthonormal frame, which completes the proof.
\end{proof}

\begin{lemma}
 $f$ preserves the total
curvature of $\C^1$ curves in $\Gamma$.
\end{lemma}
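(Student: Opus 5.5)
The idea is to compare the two "tangent indicatrices" directly through the parallel frame. Let $\gamma\colon[0,\ell]\to\Gamma$ be a $\C^1$ unit speed curve. By Lemma~\ref{lem:isometry}, $f\circ\gamma$ is a $\C^1$ unit speed curve in $\R^3$, and its total curvature is the length of its tangent indicatrix $t\mapsto (f\circ\gamma)'(t)\in\S^2$. In $\R^3$ parallel transport is the identity, so this matches the definition of $\tau$ given above. Using $df_i=\theta_i$ we compute
$$
(f\circ\gamma)'(t)=\big(\theta_1(\gamma'(t)),\theta_2(\gamma'(t)),\theta_3(\gamma'(t))\big)=\big(\langle\gamma'(t),e_i(\gamma(t))\rangle\big)_{i=1}^3 .
$$
In other words, $(f\circ\gamma)'(t)$ is the coordinate vector of $\gamma'(t)$ with respect to the orthonormal frame $(e_1,e_2,e_3)$ at $\gamma(t)$.

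Next we pull back to $\gamma(0)$. By Lemma~\ref{lem:ei} and the path independence from Lemma~\ref{lem:holonomy}, each $e_i$ restricted to $\gamma$ is the parallel translate of $e_i(\gamma(0))$ along $\gamma$. Since parallel translation is an isometry, we have
$$
\langle\gamma'(t),e_i(\gamma(t))\rangle=\big\langle P^{\gamma}_{\gamma(t),\gamma(0)}\gamma'(t),\,e_i(\gamma(0))\big\rangle .
$$
Let $L\colon T_{\gamma(0)}M\to\R^3$ be the linear isometry $v\mapsto(\langle v,e_i(\gamma(0))\rangle)_i$. It maps the unit sphere $S_{\gamma(0)}M$ onto $\S^2$, and
$$
(f\circ\gamma)'(t)=L\big(P^{\gamma}_{\gamma(t),\gamma(0)}\gamma'(t)\big)\quad\text{for all } t.
$$
So the indicatrix of $f\circ\gamma$ is the image under $L$ of the curve in $S_{\gamma(0)}M$ that defines $\tau(\gamma)$. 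A linear isometry preserves the lengths of curves (lengths are sups of chordal or spherical polygonal sums, both of which $L$ preserves). Hence $\tau(f\circ\gamma)=\tau(\gamma)$.

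The only point needing care is that $e_i$ along $\gamma$ really is the parallel translate along $\gamma$ itself, and not along some other curve. This is exactly the path-independence statement from Lemma~\ref{lem:holonomy}, already used in the proof of Lemma~\ref{lem:ei}. The identity above then holds for merely $\C^1$ curves, with no curvature integrals involved.

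Curves that are only piecewise $\C^1$ or closed are not needed here. If they were, the same argument applies segment by segment, and the corner turning angles match because $df$ is an isometry at each point.
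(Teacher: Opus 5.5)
Your proposal is correct and is essentially the paper's argument. Both proofs expand $\gamma'(t)=a_i(t)e_i(\gamma(t))$ in the parallel frame, note that parallel transport to $\gamma(0)$ gives $a_i(t)e_i(\gamma(0))$ while $(f\circ\gamma)'(t)=a_i(t)e_i'$, and conclude that the two tangent indicatrices differ by a linear isometry, so they have the same length.
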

\begin{proof}
Let $\gamma\colon [0,\ell]\to \Gamma$ be a $\C^1$ unit speed curve, and write
$
\gamma'(t)=a_i(t)e_i(\gamma(t)).
$
Since $e_i$ is parallel along $\Gamma$, the parallel transport of $\gamma'(t)$
to $T_{\gamma(0)}M$ along $\gamma$ yields the curve
$
t\mapsto a_i(t)e_i(\gamma(0)),
$
whose length equals $\tau(\gamma)$.
On the other hand,
$$
(f\circ\gamma)'(t)=df_{\gamma(t)}(\gamma'(t))
= \theta_i(\gamma'(t))\,e'_i
= a_i(t)e'_i,
$$
where $e'_i$ is the standard basis of $\R^3$. So $\tau(f\circ\gamma)$ is the length of the curve
$
t\mapsto  a_i(t)e'_i.
$
Thus $\tau(f\circ\gamma)=\tau(\gamma)$.
\end{proof}

\subsection{Convexity}\label{subsec:convex}
Here we show that $\Gamma' := f(\Gamma)$ is a convex surface. Consequently, by a covering argument,
$f$ is an embedding,  which completes the proof of Proposition \ref{prop:C11}.
To prove the convexity of $\Gamma'$ we show that the intrinsic curvature of $\Gamma'$ in the sense of Alexandrov-Zalgaller is nonnegative. Furthermore, the extrinsic curvature  in the sense of Pogorelov is bounded. It follows then from Pogorelov's generalization of Gauss' Theorema Egregium that $\Gamma'$ has nonnegative extrinsic curvature measure, and hence is convex.

\subsubsection{Nonnegativity of the intrinsic curvature}
The \emph{intrinsic curvature measure} of a $\C^1$ surface  $\Gamma$, 
   due to Alexandrov-Zalgaller \cite{alexandrov-zalgaller},  
   is  a signed Radon measure $\omega$ defined via excess angles of geodesic triangles $\Delta\subset \Gamma$. More precisely, if the interior angles of $\Delta$ are $\alpha,\beta,\gamma$, then its excess angle is
$
e(\Delta):=\alpha+\beta+\gamma-\pi.
$
For a region $U\subset\Gamma$, one defines
$$
\omega^+(U):=\sup \sum_i e(\Delta_i)^+,
\qquad
\omega^-(U):=\sup \sum_i e(\Delta_i)^-,
$$
where the supremum is taken over all finite families of pairwise nonoverlapping geodesic triangles $\Delta_i\subset U$, and
$
x^+:=\max\{x,0\},
$
$
x^-:=\max\{-x,0\}.
$
Then
$
\omega:=\omega^+-\omega^-.
$
In particular, $\omega\geq 0$ if $e(\Delta)\geq 0$ for all geodesic triangles $\Delta\subset\Gamma$. 
 Furthermore, when $\Gamma$ is smooth, 
$
\omega(U)=\int_U K_\Gamma,
$
where $K_\Gamma$ is the (intrinsic) Gauss curvature of $\Gamma$. We need the following  fact:

\begin{lemma}\label{lem:Gamma-i}
Let $\Gamma\subset M^n$ be a $\C^1$ convex hypersurface. Then there exists a sequence of smooth convex hypersurfaces $\Gamma_i\subset M^n$ which converges to $\Gamma$ in $\C^1$-topology.
\end{lemma}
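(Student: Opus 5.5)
Let $C$ be the convex body bounded by $\Gamma$. The plan is to approximate $C$ from outside by sublevel sets of smooth convex functions built from the distance function, and then take boundaries.

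\emph{Step 1: an exhaustion function.} Let $d_C:=\mathrm{dist}(\cdot,C)$. In a Cartan-Hadamard manifold $d_C$ is convex on $M$, and it is $\C^{1,1}_{\rm loc}$ on $M\setminus C$ with $|\nabla d_C|=1$ there. For small $\varepsilon>0$ the outer parallel surface $\Gamma^\varepsilon:=\{d_C=\varepsilon\}$ is therefore a $\C^{1,1}$ convex hypersurface. Its outward normal at $x$ is the direction from the footpoint $\pi(x)\in\Gamma$ to $x$, i.e.\ the parallel translate of an outward normal of $\Gamma$ at $\pi(x)$. Since $\Gamma$ is $\C^1$, its normal field is continuous. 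Hence the nearest-point projection $\Gamma^\varepsilon\to\Gamma$ is a homeomorphism converging to the identity as $\varepsilon\to0$, and the normals of $\Gamma^\varepsilon$ converge uniformly to those of $\Gamma$. This gives $\C^1$ convergence $\Gamma^\varepsilon\to\Gamma$: write both locally as graphs over a fixed chart as in Lemma~\ref{lem:convexhull} and use uniform convergence of the graph functions together with their gradients.

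\emph{Step 2: smoothing.} It remains to replace the $\C^{1,1}$ surfaces $\Gamma^\varepsilon$ by smooth convex ones that are $\C^1$-close. I would use Greene-Wu smoothing of convex functions on Riemannian manifolds. First make $d_C$ strictly convex near $\Gamma^\varepsilon$, for instance $u:=d_C+\delta\, r^2$ with $r$ the distance from a fixed interior point of $C$, which is strictly convex on $M$. Then convolve on $M$ via Riemannian mollification, $u_\eta(x)=\int u(\exp_x v)\phi_\eta(|v|)\,dv$. For small $\eta$, $u_\eta$ is smooth and strictly convex on a compact neighborhood, since strict convexity survives mollification up to an error $o(1)$. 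Moreover $u_\eta\to u$ in $\C^1$ on compacts, because $u$ is $\C^1$ near $\Gamma^\varepsilon$. Take $\Gamma_i:=\{u_{\eta}=c\}$, where $c$ is the value of $u$ chosen so that the level set is near $\Gamma^\varepsilon$ and $\nabla u\neq0$ there. By the implicit function theorem, $\Gamma_i$ is a smooth closed hypersurface $\C^1$-close to $\{u=c\}$. It bounds the convex sublevel set of a convex function, so it is convex. Finally, as $\delta\to0$ the level set $\{u=c\}$ is $\C^1$-close to $\Gamma^\varepsilon$. A diagonal sequence in $(\varepsilon,\delta,\eta)$ then yields $\Gamma_i\to\Gamma$ in $\C^1$.

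\emph{Main obstacle.} The step needing the most care is Step 1's claim of \emph{uniform} $\C^1$ convergence of $\Gamma^\varepsilon$ to $\Gamma$, rather than mere Hausdorff convergence. I would handle it by using that outward normals of $\Gamma^\varepsilon$ at $x$ and of $\Gamma$ at $\pi(x)$ are related by parallel transport along the short geodesic $[\pi(x),x]$ of length $\varepsilon$. Combined with uniform continuity of the normal field of the compact $\C^1$ surface $\Gamma$, this gives uniform closeness of the tangent planes in the chart. A secondary issue is checking that Riemannian mollification preserves convexity up to small error for merely convex (not $\C^2$) $u$. For this I would invoke Greene-Wu's theorem directly rather than reprove it.
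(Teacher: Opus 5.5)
Your proposal is correct and follows the paper's proof. The paper also strictly convexifies the distance function $u$ of $C$ by adding a small multiple of $\mathrm{dist}^2(\cdot,x_0)$, then uses Greene--Wu Riemannian convolution to get smooth strictly convex functions converging to $u$ in $\C^1$ on compact subsets of $M\setminus C$, takes their level sets, and diagonalizes using $u^{-1}(1/i)\to\Gamma$ in $\C^1$. The only differences are refinements: you decouple the convexification parameter $\delta$ from the mollification scale $\eta$ (the paper ties both to a single $\epsilon$), and you justify the $\C^1$ convergence of the outer parallel surfaces by parallel transporting normals along the footpoint geodesics, which the paper simply asserts.
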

\begin{proof}
Let $u\colon M\to\R$ be the distance function of the convex body $C$ bounded by $\Gamma$. Then $u$ is $\C^1$ on $M\setminus C$ \cite[Lem.\ 2.3]{ghomi-spruck2022}. Set 
$
\ol u_{\epsilon}(x):=u(x)+\epsilon \text{dist}^2(x,x_0)/2,
$
for some point $x_0\in C$ and $\epsilon>0$.
Then $\ol u_{\epsilon}$ is strictly convex in the sense of Greene and Wu,  and thus their Riemannian convolution preserves convexity \cite{greene-wu1972} \cite[Prop.\ 4.8]{ghomi-spruck2022}. More specifically, let $\phi\colon \R \to \R$ be a nonnegative $\C^{\infty}$ function supported in $[-1,1]$ which is constant in a neighborhood of the origin, and satisfies $\int_{\R^n}\phi(|x|)dx=1$. Then 
$$
\tilde u_\epsilon(x):=\frac{1}{\epsilon^{n}}\int_{v\in T_xM} \phi\left(\frac{|v|}{\epsilon}\right) \ol u_{\epsilon}\big(\exp_x(v)\big),
$$
yields a family of smooth strictly convex functions such that $\tilde u_\epsilon\to u$ in $\C^{1}$ on compact subsets of $M\setminus C$, as $\epsilon\to 0$. 
Thus
$
\Gamma_{i,\epsilon}:=\tilde u_{\epsilon}^{-1}(1/i)
$
are smooth convex hypersurfaces for $i\in\mathbf{N}$.
As $\epsilon\to 0$, $\Gamma_{i,\epsilon}\to u^{-1}(1/i)$ in $\C^1$. Furthermore, $u^{-1}(1/i)\to\Gamma$ in $\C^1$ as $i\to\infty$. Hence we may choose a sequence $\epsilon_i\to 0$ such that $\Gamma_i:=\Gamma_{i,\epsilon_i}\to \Gamma$ in $\C^1$.
\end{proof}

The approximation above, together with stability under Gromov-Hausdorff convergence of Alexandrov spaces with curvature bounded below  \cite{bbi2001}, yields that: 

\begin{lemma}\label{lem:intrinsic}
The intrinsic curvature of $\Gamma$ is nonnegative.
\end{lemma}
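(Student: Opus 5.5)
We argue by approximation, using Lemma \ref{lem:Gamma-i}. Let $\Gamma_i\subset M^3$ be smooth convex surfaces converging to $\Gamma$ in the $\C^1$-topology. The plan has three steps.

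\textbf{Step 1: each $\Gamma_i$ has $K_{\Gamma_i}\geq 0$.} By Gauss' equation,
$$
K_{\Gamma_i}=K(T\Gamma_i)+\det(\mathrm{II}_i).
$$
Convexity of $\Gamma_i$ gives $\det(\mathrm{II}_i)\geq 0$. However, the ambient term $K(T\Gamma_i)\leq 0$ has the wrong sign, so $\Gamma_i$ need not be nonnegatively curved. What we do know is that $K(T\Gamma_i)\to 0$ uniformly. Indeed, the tangent planes of $\Gamma_i$ converge uniformly to those of $\Gamma$, and $K$ vanishes on the latter by hypothesis. So for any $\delta>0$ we get $K_{\Gamma_i}\geq -\delta$ once $i$ is large. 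Hence, with their intrinsic metrics, the $\Gamma_i$ are smooth surfaces of curvature $\geq -\delta_i$ with $\delta_i\to 0$. These are Alexandrov spaces with $\mathrm{curv}\geq-\delta_i$.

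\textbf{Step 2: the intrinsic metrics converge.} We must show that $\Gamma_i$ with its intrinsic metric converges in the Gromov--Hausdorff sense to $\Gamma$ with its intrinsic metric. Project $\Gamma_i$ onto $\Gamma$ by nearest-point retraction onto the convex body $C$; equivalently, write $\Gamma_i$ as normal graphs over $\Gamma$ in the charts of Lemma \ref{lem:convexhull}. Because the convergence is $\C^1$, these identifications are $\C^1$ diffeomorphisms $\psi_i\colon\Gamma\to\Gamma_i$ whose differentials converge to linear isometries. Therefore $\psi_i$ is $(1+\epsilon_i)$-bi-Lipschitz for the intrinsic metrics, with $\epsilon_i\to 0$, and Gromov--Hausdorff convergence follows. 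By stability of curvature lower bounds under Gromov--Hausdorff limits \cite{bbi2001}, the limit $\Gamma$ is an Alexandrov space of curvature $\geq 0$.

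\textbf{Step 3: conclude $\omega\geq 0$.} It remains to see that $e(\Delta)\geq 0$ for every geodesic triangle $\Delta\subset\Gamma$. In a two-dimensional Alexandrov space of curvature $\geq 0$, Toponogov comparison shows that the angles of a geodesic triangle are at least the corresponding Euclidean comparison angles, which sum to $\pi$. For a $\C^1$ surface, the angles between shortest paths in the Alexandrov sense agree with the angles used by Alexandrov--Zalgaller. It follows that $e(\Delta)\geq 0$ for all $\Delta$, so $\omega^-=0$ and $\omega\geq 0$.

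The main obstacle is Step 2, namely showing that $\C^1$ convergence of the embedded surfaces yields Gromov--Hausdorff convergence of the intrinsic metrics. The approach is to compare lengths of curves under the maps $\psi_i$. The length distortion is controlled by $\|d\psi_i\|$, which tends to $1$ uniformly by $\C^1$ closeness together with compactness of $\Gamma$.
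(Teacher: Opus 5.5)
Your three steps are the paper's proof: the smooth convex approximations $\Gamma_i$ of Lemma \ref{lem:Gamma-i}, and Gauss' equation with $GK\geq 0$ giving $K_{\Gamma_i}\geq -\epsilon_i\to 0$. Then come Gromov--Hausdorff convergence $(\Gamma_i,d_{\Gamma_i})\to(\Gamma,d_\Gamma)$, stability of lower curvature bounds, and finally $e(\Delta)\geq 0$.

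The one thing to fix is your justification of Step 2, which the paper simply cites from \cite{bbi2001}. The nearest-point retraction $\Gamma_i\to\Gamma$ is \emph{not} a $\C^1$ diffeomorphism with almost isometric differential. Writing $\Gamma_i$ as a normal graph over $\Gamma$ gives the same map, and it is not the same as taking vertical graphs in the charts. Its inverse is $p\mapsto\exp_p(t_i(p)\nu(p))$ with $t_i\geq\delta_i>0$, and $\nu$ is merely continuous. Outward normal geodesics of a convex surface separate at a rate governed by the variation of $\nu$. So when $\Gamma$ is, say, $\C^{1,\alpha}$ but not $\C^{1,1}$, this inverse is not even Lipschitz, let alone $(1+\epsilon_i)$-bi-Lipschitz.

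Use instead a transversal that does not depend on $\Gamma$, for example the radial geodesics from an interior point $x_0$ of $C$. These meet $\Gamma$ transversally, since the direction to $x_0$ lies in the interior of the half-space $T_pC$. In geodesic polar coordinates about $x_0$, $\Gamma$ and $\Gamma_i$ are then radial graphs of functions $r,r_i$ on the unit sphere, with $r_i\to r$ in $\C^1$. The map $\psi_i\colon\exp_{x_0}(r(\theta)\theta)\mapsto\exp_{x_0}(r_i(\theta)\theta)$ is a $\C^1$ diffeomorphism with $d\psi_i$ uniformly close to isometries. This gives the $(1+\epsilon_i)$-bi-Lipschitz comparison and hence the Gromov--Hausdorff convergence you need. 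Vertical graphs in the semiconvex charts also work, but only locally, and would then need to be patched.
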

\begin{proof}
Let $\Gamma_i$ be the smooth approximations of $\Gamma$ provided by Lemma \ref{lem:Gamma-i}. Since $\Gamma_i\to\Gamma$ in $\C^1$ and $K$ vanishes on tangent planes of $\Gamma$, it follows that 
$$
\epsilon_i:=\sup_{p\in \Gamma_i}|K(T_p\Gamma_i)|\to 0,
$$
by continuity of $K$ on the Grassmannian of planes in $TM$ and compactness of $\Gamma$.
Since $\Gamma_i$ is convex, its Gauss-Kronecker curvature $GK\geq 0$. Thus, by Gauss' equation, 
$$
K_{\Gamma_i}(p)=K(T_p\Gamma_i)+GK(p)\geq K(T_p\Gamma_i)\geq -\epsilon_i.
$$
So $(\Gamma_i,d_{\Gamma_i})$ are Alexandrov spaces with curvature bounded
below by $-\varepsilon_i$. But $(\Gamma_i,d_{\Gamma_i})\to
(\Gamma,d_\Gamma)$ in the Gromov-Hausdorff sense \cite[Thm.\ 10.2.7 \& Ex.\ 7.4.2]{bbi2001}. It follows that $(\Gamma,d_\Gamma)$ is an Alexandrov space with curvature bounded below by $0$ \cite[Prop.\ 10.7.1]{bbi2001}. Thus the excess $e(\Delta)\geq 0$ for all geodesic triangles $\Delta\subset\Gamma$, which yields that $\omega\geq 0$.
\end{proof}

By Alexandrov's embedding theorem \cite{alexandrov2005}, the last lemma implies that there exists a convex isometric embedding $\Gamma\to\R^3$. Since the induced metric on $\Gamma$ is $\C^1$, one might expect that the embedding will be $\C^1$ as well; however, this is not known. The closest regularity result is due to Guan and Li \cite{guan-li1994}, who showed that if the metric is $\C^4$ then the embedding is $\C^{1,1}$; see also \cite{guan-li1997}.

\subsubsection{Boundedness of the extrinsic curvature}
Since $f$ is only $\C^1$, the nonnegativity of the intrinsic curvature of $\Gamma$ does not imply that $\Gamma'$ is convex. Indeed the Nash-Kuiper theorem \cite{nash1954,kuiper1955,eliashberg-mishachev2024} shows that there is no connection between the intrinsic and extrinsic geometry of $\C^1$ surfaces in $\R^3$. Thus we need to extract more information from the construction of $f$:

\begin{lemma}\label{lem:nuprime}
There exists a unit normal vector field $\nu'$ along $\Gamma'$ such that for all $p\in \Gamma$
$$
\big\langle \nu(p),e_i(p)\big\rangle_M=\big\langle \nu'(f(p)),e_i'\big\rangle_{\R^3},
$$
where $\nu$ is the outward unit normal of $\Gamma$, and $e_i'$ is the standard basis of $\R^3$. 
\end{lemma}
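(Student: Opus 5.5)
The natural candidate is to define $\nu'$ directly by the formula in the statement. For $p\in\Gamma$ we set
$$
\nu'(f(p)):=\big\langle \nu(p),e_i(p)\big\rangle_M\, e_i',
$$
and then check three things: this is well defined as a field along $\Gamma'$, it has unit length, and it is normal to $\Gamma'$. Unit length is immediate, since $(e_1,e_2,e_3)$ is an orthonormal frame of $T_pM$ by Lemma \ref{lem:ei}, so the coefficients $\langle \nu(p),e_i(p)\rangle$ form a unit vector in $\R^3$. Continuity also follows from the construction. $\Gamma$ is a $\C^1$ convex surface, so its outward normal $\nu$ is continuous, and the frame $e_i$ is $\C^1$ by Lemma \ref{lem:ei}. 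Hence $p\mapsto \nu'(f(p))$ is continuous on $\Gamma$.

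The main step is normality. Fix $p\in\Gamma$ and $X\in T_p\Gamma$. By the identity $df_i=\theta_i$ established above,
$$
df_p(X)=\theta_i(X)\,e_i'=\langle X,e_i(p)\rangle_M\, e_i'.
$$
The linear map $L_p\colon T_pM\to\R^3$, $Z\mapsto \langle Z,e_i(p)\rangle e_i'$, is a linear isometry because $e_i(p)$ is orthonormal. It satisfies $df_p=L_p|_{T_p\Gamma}$ and $\nu'(f(p))=L_p(\nu(p))$. Since $\nu(p)\perp T_p\Gamma$, it follows that $\nu'(f(p))\perp df_p(T_p\Gamma)$. By Lemma \ref{lem:isometry}, $df_p(T_p\Gamma)$ is a plane, namely the tangent plane of the immersed surface $\Gamma'$ at $f(p)$. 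So $\nu'(f(p))$ is a unit normal there.

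The expected obstacle is that $f$ is so far known only to be an immersion, not an embedding; the embedding property is exactly what this subsection is trying to prove. Consequently $\nu'$ cannot be regarded as a function on the set $\Gamma'\subset\R^3$. I would handle this by interpreting ``vector field along $\Gamma'$'' as a map on $\Gamma$ through $f$, namely the Gauss map of the immersion, $\nu'\circ f:=L(\nu)$. This is consistent with how the paper treats immersed surfaces as parametrized by $\ol\Gamma$. The alternative, where $f(p)=f(q)$ for distinct $p,q$, is irrelevant to the statement: the identity in the statement only involves $\nu'$ composed with $f$ at the point $p$, so it holds pointwise on $\Gamma$ by definition.
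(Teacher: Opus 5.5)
Your proposal is correct and follows the paper's proof: it defines $\nu'(f(p)):=\langle\nu(p),e_i(p)\rangle_M\,e_i'$, gets unit length from the orthonormality of the frame, and gets normality from $df_p(X)=\langle X,e_i(p)\rangle_M\,e_i'$, which is the paper's computation written via the linear isometry $L_p$. Your remark that $\nu'$ should be read along the immersion (since $f$ is not yet known to be injective) is a sensible clarification that the paper leaves implicit.
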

\begin{proof}
Let $\nu_i(p):=\langle \nu(p),e_i(p)\rangle_M$,
and set
$\nu'(f(p)):=\nu_i(p)\,e_i'.$
Then
$
\langle \nu(p),e_i(p)\rangle_M=\langle \nu'(f(p)),e_i'\rangle_{\R^3},
$
and $|\nu'(f(p))|=|\nu(p)|=1$ as desired. It remains to check that $\nu'$ is normal to $\Gamma'$. For any $X\in T_p\Gamma$,
$
df_p(X)=\theta_i(X)\,e_i'
=
 \langle X,e_i(p)\rangle_M \,e_i'.
$
Thus
$
\langle df_p(X),e_i'\rangle_{\R^3}
=
\langle X,e_i(p)\rangle_M.
$
It follows that
$$
\big\langle df_p(X),\nu'(f(p))\big\rangle_{\R^3}
=
 \nu_i(p)\langle X,e_i(p)\rangle_M
=
\langle X,\nu(p)\rangle_M
=
0,
$$
which completes the proof. 
\end{proof}

A closed $\C^1$ oriented surface $\Gamma\subset\R^3$ has \emph{bounded extrinsic curvature}, in the sense of Pogorelov \cite[p.\ 590]{pogorelov1973} \cite[Def.\ 2.1]{pakzad2024}, if its Gauss map has bounded variation or is $BV$, i.e., for any compact set $X\subset\Gamma$, $\nu(X)\subset\S^2$ has finite volume. Note that in the terminology of Pogorelov ``smooth'' means $\C^1$ 
\cite[p.\ 572]{pogorelov1973}.

\begin{lemma}\label{lem:bounded}
$\Gamma'$ has bounded extrinsic curvature.
\end{lemma}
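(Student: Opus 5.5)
By Lemma~\ref{lem:nuprime}, the Gauss map of $\Gamma'$ satisfies $\nu'\circ f=\Phi(\nu)$, where $\Phi$ takes a unit vector $v\in T_pM$ to $\langle v,e_i(p)\rangle e_i'\in\S^2$. Equivalently, $\nu'\circ f$ is obtained by parallel transporting $\nu(p)$ back to $p_0$ along $\Gamma$, which Lemma~\ref{lem:holonomy} makes well defined, and then identifying $S_{p_0}M$ with $\S^2$ via the frame $e_i(p_0)$. So it suffices to show that for every compact $X\subset\Gamma$ the set
$$
N(X):=\big\{P_{p,p_0}\nu(p): p\in X\big\}\subset S_{p_0}M
$$
has finite area. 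Since $\Gamma$ is closed, it is enough to bound the area of $N(\Gamma)$.

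I would get this bound from the convexity of $\Gamma$ together with the smooth approximation of Lemma~\ref{lem:Gamma-i}. Let $\Gamma_i$ be smooth convex surfaces converging to $\Gamma$ in $\C^1$. Extend the parallel frame $e_i$ to a $\C^1$ frame on a neighborhood $U$ of $\Gamma$ as in Section~\ref{subsec:immersion}. Define $G_i\colon\Gamma_i\to\S^2$ by $G_i(q):=\langle\nu_i(q),e_j(q)\rangle e_j'$, where $\nu_i$ is the outward normal of $\Gamma_i$. Then $G_i$ is $\C^1$, and
$$
dG_i(X)=\langle\nabla_X\nu_i,e_j\rangle e_j'+\langle\nu_i,\nabla_Xe_j\rangle e_j'.
$$
The first term is the shape operator expressed in the frame. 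Its Jacobian determinant is $GK\ge0$, and its operator norm is at most the mean curvature $H_i\ge0$. The second term is bounded by $\epsilon_i|X|$, with $\epsilon_i:=\sup_{\Gamma_i}|\nabla e_j|$ taken over vectors tangent to $\Gamma_i$. Here $\epsilon_i\to0$, because $\nabla e_j$ is continuous, vanishes in directions tangent to $\Gamma$, and $T\Gamma_i\to T\Gamma$. Hence
$$
\operatorname{Jac}(G_i)\le GK_i+C\epsilon_i H_i+C\epsilon_i^2 .
$$
The integrals $\int_{\Gamma_i}GK_i$ are uniformly bounded: by Gauss' equation and Gauss--Bonnet, $\int GK_i=4\pi-\int K(T\Gamma_i)$, and the last term tends to $0$. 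The integrals $\int H_i$ are also uniformly bounded, since for convex surfaces in $M$ they are controlled by area and the geometry of a fixed neighborhood, e.g.\ via the first variation of area of parallel surfaces, which stay in a compact set. So $\operatorname{area}(G_i(\Gamma_i))\le 4\pi+o(1)$.

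The hardest step is passing to the limit, since $G_i\to\nu'\circ f$ only in $\C^0$, and area of images is not continuous under uniform convergence. I would use lower semicontinuity of the multiplicity-free area in the following form. If $y\in\S^2$ is an interior point of $\nu'(f(\Gamma))$ in the degree sense, then $y$ lies in $G_i(\Gamma_i)$ for large $i$, because the degree of $G_i$ over small circles is stable under $\C^0$ perturbation. Pogorelov's definition concerns the measure of the image, which needs more than this. Instead I would run the same estimate locally on the normal images of small geodesic disks and use convexity: the outward normal cone of $\Gamma$ over a region is exactly the Hausdorff limit of the normal images of the approximations, and the area of compact sets is upper semicontinuous under Hausdorff convergence. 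Then
$$
|\nu'(f(X))|\le\limsup_i\operatorname{area}\big(G_i(X_i)\big)\le 4\pi.
$$
This is finite, which is the claimed bound.
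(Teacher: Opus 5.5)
There is a genuine gap; in fact, two. First, the quantity you end up bounding, $|\nu'(f(X))|$, is the area of a subset of $\S^2$. So $|\nu'(f(X))|\le 4\pi$ holds for trivial reasons, and your concluding line carries no information. Taken literally, ``$\nu(X)$ has finite volume'' is automatic, so it cannot be the operative condition. What the paper actually verifies is that $\nu'$ is BV. Pogorelov's own formulation asks for a uniform bound on $\sum_k|\nu'(E_k)|$ over finite families of pairwise disjoint closed sets $E_k$ in a neighborhood, i.e.\ spherical area counted with multiplicity. Your estimate $\int_{\Gamma_i}\operatorname{Jac}(G_i)\le 4\pi+o(1)$ is of the right multiplicity-counting type for the approximations, but you discard the multiplicity when you pass to images.

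Second, the limit step is invalid even for plain image areas. Lebesgue measure is upper semicontinuous under Hausdorff convergence of compact sets in the sense $\limsup_i|A_i|\le|A|$. That is the reverse of the inequality $|\nu'(f(X))|\le\limsup_i\operatorname{area}(G_i(X_i))$ you need. Uniform convergence does not stop image area from jumping up in the limit: take $g_i(x,y)=h_i(x)$ with polygonal $h_i$ converging uniformly to a space-filling curve. Your degree argument only reaches points of nonzero local degree, as you yourself note. So no information about $\nu'$ is transferred from the $G_i$.

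The missing idea is to get second-order control on $\Gamma$ itself rather than on its approximations, and convexity supplies this directly. The paper argues as follows.
In a chart, $\Gamma$ is the graph of a $\C^1$ semiconvex function $u$, so $\nabla u\in BV_{\mathrm{loc}}$. The Riemannian unit normal is $\nu=H(x,\nabla u)$ with $H$ of class $\C^1$, hence $\nu$ is BV. Since $e_i$ is $\C^1$, the components $\nu_i=\langle\nu,e_i\rangle$ are BV. Finally, $\nu'\circ f=\nu_i e_i'$ by Lemma~\ref{lem:nuprime} and $f$ is locally bi-Lipschitz, so $\nu'$ is BV. No smoothing, Gauss--Bonnet, or mean curvature bounds are needed.

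If you want the multiplicity form directly, the same input works. Write $u=w-A|x|^2$ with $w$ convex. Minty's parametrization shows that the graph of $\nabla u$ is a Lipschitz surface in $\R^4$ of locally finite area, and $\nu'\circ f$ factors through it by a $\C^1$ map. Hence the images of disjoint sets have uniformly bounded total area.
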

\begin{proof}
The outward normal $\nu'$ of $\Gamma'$ is BV if its components $\nu_i':=\langle \nu',e_i'\rangle_{\R^3}$ are BV. By Lemma \ref{lem:nuprime}, $\nu_i'\circ f=\nu_i:=\langle \nu, e_i\rangle_M$. So, since $f\colon\Gamma\to\Gamma'$ is  locally bi-Lipschitz, it suffices to check that $\nu_i$ is BV. This in turn follows if $\nu$ is BV in local coordinates, since $e_i$ is $\C^1$. Choose a local chart
$
(U, \phi)
$
of $M$
such that
$$
\phi(\Gamma\cap U)
=
\big\{(x,z)\in \Omega\times\R\mid\; z=u(x)\big\},
$$
where  $u\in \C^{1}(\Omega)$.
Since $\Gamma$ is convex, $u$ is semiconvex \cite[Lem.\ 6.2]{ghomi-spruck2022}, and therefore
$
\nabla u \in BV_{\mathrm{loc}}(\Omega;\R^{2}).
$
Set
$
F(x,z):=z-u(x)
$
so that $\phi(\Gamma\cap U)=F^{-1}(0)$.
Let $g=(g_{ij})$ denote the metric of $M$.
The Riemannian gradient is given by
$
\nabla^{g}F=g^{-1}\cdot\, \nabla F.
$
Thus 
$$
\nu
=
\frac{\nabla^{g}F}{|\nabla^{g}F|_{g}}
=
\frac{g^{-1}\,(-\nabla u,1)}
{\sqrt{(-\nabla u,1)^{T}
\,g^{-1}\,(-\nabla u,1)}}.
$$
So $\nu(\cdot,u(\cdot)) = H(\cdot,\nabla u(\cdot))$, where
$H\colon\Omega\times\R^2\to\R^3$ is $\C^1$, hence locally Lipschitz.
Since $\nabla u\in BV_{\mathrm{loc}}(\Omega;\R^2)$, it follows that
$\nu\in BV_{\mathrm{loc}}(\Omega;\R^3)$, as desired.
\end{proof}

\subsubsection{Gauss-Pogorelov Theorema Egregium}
Using the findings above,  together with two more results of Pogorelov for $\C^1$ surfaces with bounded extrinsic curvature, we now obtain the following lemma which concludes the proof of Proposition \ref{prop:C11}. 

The main tool we apply here is Pogorelov's remarkable generalization of Gauss' Theorema Egregium \cite[Chap.\ IX, Sec.\ 9]{pogorelov1973} 
to surfaces with bounded extrinsic curvature, which equates the intrinsic and extrinsic curvature measures; see  \cite[p.\ 1138]{borisenko2019} and \cite[Sec.\ 6]{borisenko2008} for overviews. The \emph{extrinsic curvature measure} $\sigma$ is defined  by generalizing the notions of elliptic and hyperbolic points to $\C^1$ surfaces $\Gamma\subset\R^3$. For any region $U\subset\Gamma$, let $\sigma^+(U)$ and $\sigma^-(U)$ denote the total  variation of the Gauss map over the elliptic and hyperbolic points respectively. Then  $\sigma(U):=\sigma^+(U)-\sigma^-(U)$ \cite[p.\ 593]{pogorelov1973}. The generalized Theorema Egregium  states that
$$
\sigma^+(U)=\omega^+(U), \qquad \sigma^-(U)=\omega^-(U),
$$
for all Borel sets $U\subset\Gamma$ \cite[Thm.\ 4, p.\ 649]{pogorelov1973}. Thus $\sigma=\omega$.

The other result of Pogorelov we need is that a closed surface of bounded extrinsic curvature in $\R^3$ with nonnegative extrinsic curvature  is convex \cite[Thm.\ 2, p.\ 615]{pogorelov1973}, which is a generalization of the Chern-Lashof characterization of $\C^2$ convex surfaces \cite{chern-lashof1957}.

\begin{lemma}
$\Gamma'$ is convex.
\end{lemma}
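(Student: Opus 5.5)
The plan is to combine the three ingredients just assembled: Lemma~\ref{lem:intrinsic}, Lemma~\ref{lem:bounded}, and Pogorelov's two theorems. I first observe that $f\colon\Gamma\to\Gamma'$ is a $\C^1$ isometric immersion (Lemma~\ref{lem:isometry}). Hence the intrinsic metric of $\Gamma'$, viewed as an immersed surface parametrized by $\Gamma$, coincides with that of $\Gamma$. Geodesic triangles and their excess angles are intrinsic notions, so they correspond under $f$. The Alexandrov--Zalgaller intrinsic curvature measure $\omega'$ of $\Gamma'$ is then the pushforward of $\omega$, and by Lemma~\ref{lem:intrinsic} we get $\omega'\ge 0$, i.e.\ $\omega'^-=0$.

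Next, Lemma~\ref{lem:bounded} says $\Gamma'$ is a closed $\C^1$ surface of bounded extrinsic curvature in Pogorelov's sense, oriented by the normal $\nu'$ of Lemma~\ref{lem:nuprime}. Pogorelov's generalized Theorema Egregium then applies and gives $\sigma'^-(U)=\omega'^-(U)=0$ for every Borel $U\subset\Gamma'$. Therefore the extrinsic curvature $\sigma'=\sigma'^+\ge 0$.

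Finally I invoke Pogorelov's theorem that a closed surface of bounded extrinsic curvature with nonnegative extrinsic curvature is convex. This yields that $\Gamma'$ is convex, i.e.\ bounds a convex body in $\R^3$. The embedding claim follows from a covering argument, as indicated at the start of Section~\ref{subsec:convex}. Since $\Gamma'$ is convex, it is a topological sphere, so $f\colon\Gamma\to\Gamma'$ is a local homeomorphism between closed surfaces. Such a map is a covering, and because $\Gamma'$ is simply connected and $\Gamma$ is connected, it is one-sheeted.

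The main obstacle is making sure Pogorelov's results apply to $\Gamma'$ as a possibly non-embedded immersed surface, since a priori we do not know $f$ is injective. I expect this to be fine: Pogorelov's theory is local and is stated for immersed $\C^1$ surfaces, with the convexity theorem for closed surfaces of bounded extrinsic curvature covering the immersed case (as in Chern--Lashof). Failing that, I would work in the parametrization by $\Gamma$ throughout: apply the Theorema Egregium locally on embedded pieces, and use additivity of the measures to obtain $\sigma'^-=0$ globally. A secondary point is checking that Pogorelov's notion of elliptic and hyperbolic points uses an orientation consistent with $\nu'$. This is harmless, because $\sigma^-$ (the hyperbolic part) is independent of orientation.
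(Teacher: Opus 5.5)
Your proposal is correct and follows the paper's proof essentially step for step. You transfer nonnegativity of the intrinsic curvature from $\Gamma$ to $\Gamma'$ via the isometry, use Lemma~\ref{lem:bounded} to place $\Gamma'$ in Pogorelov's class, apply the generalized Theorema Egregium to get $\sigma^-=\omega^-=0$, and conclude with Pogorelov's convexity theorem. Your covering argument for injectivity and your remarks on immersed surfaces and orientation (the sign of the Gauss-map Jacobian does not depend on the choice of normal for surfaces in $\R^3$) are sound elaborations of points the paper leaves implicit.
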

\begin{proof}
By Lemma \ref{lem:intrinsic} the intrinsic curvature  of $\Gamma$ is nonnegative. Consequently, the intrinsic curvature  of $\Gamma'$ is nonnegative as well by isometry.  Since, by Lemma \ref{lem:bounded}, $\Gamma'$ has bounded extrinsic curvature, it follows from the generalized Theorema Egregium \cite[Thm.\ 4, p.\ 649]{pogorelov1973}  that $\Gamma'$ has nonnegative extrinsic curvature. Consequently  $\Gamma'$ is convex \cite[Thm.\ 2, p.\ 615]{pogorelov1973}.
\end{proof}

\section{A Schur-type Comparison Theorem}
Here we generalize the classical Schur comparison theorem \cites{chern1967,sullivan2008}  for curves in $\R^n$, also known as the ``bow lemma'' \cite{petrunin-zamora2022,gromov2025}, to Cartan-Hadamard manifolds.
A curve $\gamma\colon[0,\ell]\to\R^2$ is \emph{chord-convex} if connecting its endpoints  $\gamma(0)$ and $\gamma(\ell)$ 
yields a simple closed curve which bounds a convex body.
For every pair of points $p$, $q\in M$ let $pq$ denote the geodesic connecting them, and $|pq|$ be the length of $pq$. 

\begin{theorem}\label{thm:schur}
Let $\gamma_1\colon [0,\ell]\to\R^2$, $\gamma_2\colon [0,\ell]\to M^n$ be $\C^{1}$ unit speed curves.
Suppose that $\gamma_1$ is chord-convex, and for every interval $I\subset [0,\ell]$, $\tau(\gamma_2(I))\leq \tau(\gamma_1(I))$. Then 
$$
|\gamma_2(0)\gamma_2(\ell)|\geq |\gamma_1(0)\gamma_1(\ell)|.
$$
\end{theorem}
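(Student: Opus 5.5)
We follow the classical proof of Schur's theorem (the bow lemma), with the Euclidean inner product replaced by a comparison that uses the convexity of the distance function in $M$. Both curves are $\C^1$, so total curvature is only defined through parallel transport of tangents. We therefore first reduce to a convenient normalization on the planar side and work throughout with the tangent indicatrices in $T_{\gamma_2(0)}M$ and in $\R^2$.

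\emph{Step 1 (choice of reference point on the planar curve).} Since $\gamma_1$ is chord-convex and $\C^1$, there is a parameter $t_0\in[0,\ell]$ at which $\gamma_1'(t_0)$ is parallel to the chord $\gamma_1(\ell)-\gamma_1(0)$. This is the point of the arc farthest from the chord line. Let $u:=\gamma_1'(t_0)$. Chord-convexity gives that the tangent direction of $\gamma_1$ turns monotonically. It follows that for $t\in[0,\ell]$ the angle $\angle(\gamma_1'(t),u)$ equals $\tau(\gamma_1([t,t_0]))$, or $\tau(\gamma_1([t_0,t]))$ when $t>t_0$, and this angle is at most $\pi$. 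Hence
$$
|\gamma_1(0)\gamma_1(\ell)|=\int_0^\ell\langle\gamma_1'(t),u\rangle\,dt=\int_0^\ell\cos\tau\big(\gamma_1([t,t_0])\big)\,dt,
$$
where $[t,t_0]$ means the interval between $t$ and $t_0$.

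\emph{Step 2 (Riemannian side).} Set $p:=\gamma_2(t_0)$ and $w:=\gamma_2'(t_0)$, and let $c$ be the complete geodesic through $p$ in direction $w$. Let $b$ be the Busemann function of the ray $c|_{[0,\infty)}$. In a Cartan-Hadamard manifold $b$ is $\C^1$, convex, and $1$-Lipschitz, with $\nabla b(p)=-w$. We will show
$$
|\gamma_2(0)\gamma_2(\ell)|\geq b(\gamma_2(0))-b(\gamma_2(\ell))=-\int_0^\ell\langle\nabla b,\gamma_2'\rangle\,dt.
$$
The inequality holds because $b$ is $1$-Lipschitz. It then suffices to prove the pointwise bound $-\langle\nabla b(\gamma_2(t)),\gamma_2'(t)\rangle\geq\cos\tau(\gamma_2([t,t_0]))$ for $t\ge t_0$, and the analogue for $t<t_0$ after reversing signs. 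For this, set $V(t):=-\nabla b(\gamma_2(t))$. The angle $\angle(\gamma_2'(t),V(t))$ is at most the length of the path traced by the tangent indicatrix, measured in a way that controls how $V$ rotates along $\gamma_2$. The key claim is that along any curve, the rotation of the unit field $-\nabla b$, measured via parallel transport, is only such that
$$
\angle\big(\gamma_2'(t),V(t)\big)\leq \tau\big(\gamma_2([t_0,t])\big)\le\tau\big(\gamma_1([t_0,t])\big)\le\pi.
$$
The intended mechanism is the convexity of $b$, which says that $\nabla^2 b\geq0$. Differentiating $\langle\gamma_2',V\rangle$ gives the derivative $\langle\nabla_t\gamma_2',V\rangle-\nabla^2b(\gamma_2',\gamma_2')$, so the Hessian term has the favourable sign. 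Since cosine is decreasing on $[0,\pi]$, integrating and comparing with Step 1 gives the theorem.

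\emph{Main obstacle.} The delicate point is the angle estimate. The plan is to use the function $\phi(t):=\angle(\gamma_2'(t),V(t))$ and show that it is absolutely continuous with $\phi'\le$ (the speed of the tangent indicatrix). The Hessian contribution pushes $V$ toward $\gamma_2'$ along the curve: the derivative of $V$ is $-\nabla^2 b(\gamma_2')$, and its component in the plane spanned by $V$ and $\gamma_2'$ decreases the angle when $\nabla^2b\ge0$. Making this rigorous has two difficulties. First, $\gamma_2$ is only $\C^1$, which we handle by approximating with smooth curves whose total curvature on each subinterval converges, and passing to the limit. Second, the Hessian can have a component orthogonal to that plane, which we handle by writing $\cos\phi=\langle\gamma_2',V\rangle$ and differentiating this directly rather than differentiating $\phi$ itself. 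If the Busemann approach fails for lack of control on $\nabla^2 b$ near $\phi=\pi$, the fallback is to replace $b$ by the distance from a far point on $c$ and take a limit, or to use Reshetnyak majorization together with a CAT$(0)$ comparison of the chord against the development of $\gamma_2$ along its parallel frame.
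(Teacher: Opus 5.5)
Your Step 1 is fine, but Step 2 rests on a sign error. The pointwise bound it is meant to give is false for $t>t_0$ as soon as $M$ has negative curvature.

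You need a \emph{lower} bound on $\cos\phi=\langle\gamma_2',V\rangle$. In your own formula $\frac{d}{dt}\langle\gamma_2',V\rangle=\langle\nabla_t\gamma_2',V\rangle-\nabla^2b(\gamma_2',\gamma_2')$, the Hessian term is $\le0$ and pushes $\cos\phi$ down. Convexity of $b$ means $\langle\nabla_tV,\gamma_2'\rangle=-\nabla^2b(\gamma_2',\gamma_2')\le0$: the normal of the convex horospheres turns \emph{away} from the direction of motion. The sign is favourable only for $t<t_0$, where you move against the ray, and a single $b$ cannot serve both sides. In $\mathbf{H}^2$, a curve that turns through a small angle just after $t_0$ and then continues as a geodesic ends at an ideal point other than $c(\infty)$. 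Its angle with $V$ therefore tends to $\pi$, while $\tau(\gamma_2([t_0,t]))$ stays small.

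This breaks the whole chain, not just your proof of the angle lemma. Let $\gamma_1$ be two segments of length $L$ joined by a short arc turning through $\epsilon$, and let $\gamma_2\subset\mathbf{H}^2$ be the curve with the same geodesic curvature function. Then $t_0$ is the midpoint of the turn. Take $b=-\log y$ in the upper half-plane; along a unit speed geodesic, $b=\log\cosh s+\mathrm{const}$ with $s$ measured from the highest point. One finds $b(\gamma_2(0))-b(\gamma_2(\ell))\approx\log\frac{\cosh(L+s_0)}{\cosh(L-s_0)}<2s_0$, where $\tanh s_0=\cos(\epsilon/2)$, i.e.\ $s_0=\log\cot(\epsilon/4)$. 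By contrast, $|\gamma_1(0)\gamma_1(\ell)|\approx 2L\cos(\epsilon/2)$. So for $L\gg\log(1/\epsilon)$ your middle inequality fails. Your first fallback has the same defect, since $d(\cdot,c(s))-s$ is also convex and converges to $b$.

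Your second fallback names the paper's route but omits the step where all the work lies. The paper properly majorizes $\gamma_2$ by a chord-convex $\C^1$ planar curve (Lemma~\ref{lem:reshetnyak}). It then proves that this majorization does not increase total curvature on any subinterval (Proposition~\ref{prop:tau}), after which the classical planar Schur theorem finishes. The curvature comparison comes from two ingredients:
\begin{itemize}
\item the expansion $|\gamma(t-h)\gamma(t+h)|=2h-\frac{\kappa(t)^2}{3}h^3+o(h^3)$ (Lemma~\ref{prop:riemann}), played against $|\tilde\gamma(t-h)\tilde\gamma(t+h)|\ge|\gamma(t-h)\gamma(t+h)|$;
\item the uniform bound of Lemma~\ref{lem:gamma2}, which shows $\tilde\gamma$ is $\C^{1,1}$.
\end{itemize}
Together these give $\tilde\kappa\le\kappa$ a.e.\ for $\C^{1,1}$ curves. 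The $\C^1$ case follows by rounded polygonal approximation and Helly selection on turning angles. Nonpositive curvature enters through Reshetnyak's theorem and the noncontracting exponential map, so no reference direction ever has to be transported along the curve.

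Finally, a chord comparison between $\gamma_2$ and its parallel-frame development is not a standard fact, and it fails in general. A full circle of radius $r$ in $\mathbf{H}^2$ has zero chord. Its development, however, is a circle of radius $\tanh r$ run through angle $2\pi\cosh r$, which generally does not close. Such a comparison would therefore need its own proof under your hypotheses.
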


A partial extension of Schur's theorem to hyperbolic space $\mathbf{H}^n$ was developed by Epstein \cite{epstein1985}, and the polygonal version, known as Cauchy's ``arm lemma'' \cite{aigner-ziegler1999}, holds in $\CAT(0)$ spaces \cite[9.63]{akp2024}. 
The above result was established in \cite[Thm.\ 3.1]{ghomi2025-convexity} in the case where $\gamma_1$ and $\gamma_2$ are $\C^{2}$, via polygonal approximations after Epstein \cite{epstein1985} and estimates of Alexander-Bishop \cite{alexander-bishop1996}. Here we adopt a more analytic approach. See \cite{gromov2025,ni2023,mendes2025,howard2024,lopez2011} for other recent variations of Schur's result and applications.

We prove Theorem \ref{thm:schur} by reducing it to the Euclidean case where the classical Schur theorem  finishes the argument. This is achieved via majorization in the sense of Reshetnyak (Section \ref{subsec:majorization}), once we check that  this operation does not increase curvature. To this end we develop several estimates for the chord-lengths of curves in Cartan-Hadamard manifolds (Section \ref{subsec:estimates}). These estimates yield the theorem in the $\C^{1,1}$ case (Section \ref{subsec:C11}), and an approximation argument completes the proof (Section \ref{subsec:proof}).

We will assume that all curves below  have unit speed, i.e., are parametrized by arclength, unless stated otherwise.

\subsection{Majorization}\label{subsec:majorization}
Let $\gamma\colon[0,\ell]\to M$ be a curve. We say that a curve $\tilde\gamma\colon[0,\ell]\to\R^2$ \emph{majorizes} $\gamma$ provided that  
 $
|\tilde\gamma(t)\tilde\gamma(s)|\geq|\gamma(t)\gamma(s)|
$ 
for all $t$, $s\in[0,\ell]$. In addition if $ |\tilde\gamma(0)\tilde\gamma(\ell)|=|\gamma(0)\gamma(\ell)|$, then we say that $\tilde\gamma$ is \emph{proper}. The majorizing curve is also known as ``unfolding''  \cites{cks2002,ghomi-wenk2021} or ``chord-stretching'' \cites{sallee1973,brooks-strantzen1992}. Reshetnyak's  theorem \cites{akp2024,reshetnyak1968} states that when $\gamma$ is closed, i.e., $\gamma(0)=\gamma(\ell)$,  it admits a proper majorization by a convex curve. We need the following variation:

 \begin{lemma}\label{lem:reshetnyak}
 For every $\C^1$ curve $\gamma\colon[0,\ell]\to M$  there exists a chord-convex $\C^1$ curve $\tilde\gamma\colon[0,\ell]\to \R^2$ which properly majorizes  $\gamma$.
  \end{lemma}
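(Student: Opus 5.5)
The plan is to reduce to Reshetnyak's theorem for closed curves. Close up $\gamma$ by adding the geodesic segment $\gamma(\ell)\gamma(0)$, obtaining a closed rectifiable curve $\beta$ of length $\ell+d$, where $d:=|\gamma(0)\gamma(\ell)|$. Since $M$ is $\CAT(0)$, Reshetnyak's majorization theorem \cite{reshetnyak1968,akp2024} provides a convex region $D\subset\R^2$ and a short (1-Lipschitz) map $F\colon D\to M$ whose restriction to $\partial D$ is a length-preserving parametrization of $\beta$. Let $\tilde\beta$ be the boundary curve of $D$, parametrized by arclength so that $F\circ\tilde\beta=\beta$. The arc of $\tilde\beta$ over the geodesic side $\gamma(\ell)\gamma(0)$ has length $d$, and its endpoints are mapped by $F$ to points at distance $d$. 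Since $F$ is short, their Euclidean distance is at least $d$, while the arc has length $d$; hence that arc is a straight segment of length exactly $d$. Let $\tilde\gamma$ be the complementary arc of $\partial D$. Then $\tilde\gamma$ together with this segment bounds the convex body $D$, so $\tilde\gamma$ is chord-convex. Moreover $|\tilde\gamma(t)\tilde\gamma(s)|\geq|F(\tilde\gamma(t))F(\tilde\gamma(s))|=|\gamma(t)\gamma(s)|$, and the chord of $\tilde\gamma$ equals $d$. So $\tilde\gamma$ properly majorizes $\gamma$. If $d=0$, this is just Reshetnyak's theorem; the degenerate case where $D$ is a segment is handled by the same reasoning.

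The remaining issue is regularity: $\tilde\gamma$ must be $\C^1$ and unit speed. It is $1$-Lipschitz with $|\tilde\gamma(t)\tilde\gamma(s)|\geq|\gamma(t)\gamma(s)|$. Since $\gamma$ is $\C^1$ with unit speed, $|\gamma(t)\gamma(s)|/|t-s|\to 1$ uniformly as $|t-s|\to0$, so $\tilde\gamma$ satisfies $|t-s|\geq|\tilde\gamma(t)\tilde\gamma(s)|\geq(1-o(1))|t-s|$. Thus every secant of $\tilde\gamma$ over short parameter intervals has almost unit length ratio.

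A convex boundary arc parametrized by arclength has one-sided unit tangents everywhere, with monotone turning angle. At a corner with turning angle $\theta>0$, points at parameters $t-h$ and $t+h$ would satisfy $|\tilde\gamma(t-h)\tilde\gamma(t+h)|\approx 2h\cos(\theta/2)<(1-c)\,2h$, contradicting the bound above. Hence $\tilde\gamma$ has no corners in the open interval, so its tangent angle, being monotone with no jumps, is continuous. Therefore $\tilde\gamma$ is $\C^1$ on $(0,\ell)$, and up to the endpoints via one-sided derivatives.

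I expect this regularity step to be the main obstacle. It requires a careful uniform form of the chord-length estimate near every point, including the endpoints where $\tilde\gamma$ meets the segment, where only one-sided tangents are needed. Corners of $\partial D$ at the junctions with the segment are allowed, since they lie outside the open arc.
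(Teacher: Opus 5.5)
Your proposal is correct and follows essentially the same route as the paper. Both close up $\gamma$ with a geodesic and apply Reshetnyak's theorem; you show directly from shortness of $F$ that the geodesic side becomes a straight segment of length $d$, where the paper cites \cite[Prop.\ 9.54]{akp2024}. Both then exclude corners of $\tilde\gamma$ in $(0,\ell)$ by comparing $|\tilde\gamma(t-h)\tilde\gamma(t+h)|\geq 2h+o(h)$ with the corner chord $2h\cos(\theta/2)+o(h)$ (the paper phrases this via the law of cosines), and use convexity to upgrade differentiability to $\C^1$.
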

  \begin{proof}
  Join $\gamma(\ell)$ to $\gamma(0)$  by a geodesic to obtain the extension of $\gamma$ to a closed constant speed curve $\gamma_0\colon[0,\ell_0]\to M$, where $\ell_0:=\ell+|\gamma(0)\gamma(\ell)|$. 
  By Reshetnyak's theorem, $\gamma_0$ is majorized by a convex planar curve $\tilde\gamma_0\colon [0,\ell_0]\to\R^2$. Any majorization preserves the geodesic subsegments of the curve \cite[Prop.\ 9.54]{akp2024}. In particular $\tilde\gamma_0([\ell,\ell_0])$ is a line segment with  length equal to $|\gamma_0(\ell)\gamma_0(\ell_0)|=|\gamma(0)\gamma(\ell)|$. Thus the restriction of $\tilde\gamma_0$ to $[0,\ell]$ yields a proper majorizing  curve $\tilde\gamma$. 
  
Next we show that $\tilde\gamma$ is $\C^1$. First note that one-sided derivatives of $\tilde\gamma$ exist at each point by convexity \cite[Thm.\ 1.5.4]{schneider2014}. Furthermore, for each $t_0\in(0,\ell)$, 
$$
|\tilde\gamma(t_0-h)\tilde\gamma(t_0+h)|
\ge |\gamma(t_0-h)\gamma(t_0+h)|
=2h+o(h),
$$
since $\gamma$ is differentiable at $t_0$.
On the other hand, 
$
|\tilde\gamma(t_0-h)\tilde\gamma(t_0+h)|\le 2h,
$
by the triangle inequality.
So
$
|\tilde\gamma(t_0-h)\tilde\gamma(t_0+h)|=2h+o(h).
$
Next note that 
$$
2h+o(h)=|\tilde\gamma(t_0-h)\tilde\gamma(t_0+h)|
\le
|\tilde\gamma(t_0-h)\tilde\gamma(t_0)|
+
|\tilde\gamma(t_0)\tilde\gamma(t_0+h)|
\le 2h.
$$
Therefore
$|\tilde\gamma(t_0-h)\tilde\gamma(t_0)|=h+o(h)$ and
$|\tilde\gamma(t_0)\tilde\gamma(t_0+h)|=h+o(h)$. Now
the law of cosines  implies that
the angle 
$$
\measuredangle\big(\tilde\gamma(t_0-h),\tilde\gamma(t_0),\tilde\gamma(t_0+h)\big)\to\pi.
$$
So the left and right derivatives of $\tilde\gamma$ coincide at $t_0$, which shows that $\tilde\gamma$  is differentiable everywhere. Finally, 
  since $\tilde\gamma$ is convex, it follows that it is $\C^1$ \cite[Thm.\ 1.5.4]{schneider2014}.
    \end{proof}

To prove Theorem \ref{thm:schur}, let $\tilde\gamma_2$ be a proper majorization of $\gamma_2$ furnished by the above lemma. It suffices then to show that $|\tilde\gamma_2(0)\tilde\gamma_2(\ell)|\geq |\gamma_1(0)\gamma_1(\ell)|$. This follows immediately from the classical Schur theorem \cite[Thm.\ 5.1]{sullivan2008} if the majorization does not increase curvature, i.e.,
$
\tau(\tilde\gamma(I))\leq \tau(\gamma(I))$ for all intervals $I\subset[0,\ell].
 $
 Thus it remains only to show:

 \begin{proposition}\label{prop:tau}
 For every $\C^1$ curve $\gamma\colon[0,\ell]\to M$   there exists a chord-convex $\C^1$ proper majorization $\tilde\gamma\colon[0,\ell]\to\R^2$ which does not increase curvature.
  \end{proposition}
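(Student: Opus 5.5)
We take $\tilde\gamma$ to be the chord-convex $\C^1$ proper majorization given by Lemma~\ref{lem:reshetnyak}. What remains is to show $\tau(\tilde\gamma(I))\le\tau(\gamma(I))$ for every interval $I=[a,b]\subset[0,\ell]$. Since $\tilde\gamma$ is a $\C^1$ convex arc in the plane, $\tau(\tilde\gamma(I))$ is simply the angle between $\tilde\gamma'(a)$ and $\tilde\gamma'(b)$, measured as the turning of the tangent along a convex arc, so it is at most $\pi$ plus boundary corrections. A convenient way to localize is to write this turning as a limit of sums of exterior angles of inscribed polygons $\tilde\gamma(t_0),\dots,\tilde\gamma(t_k)$ with $t_j$ a fine partition of $I$. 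Each exterior angle at $\tilde\gamma(t_j)$ is $\pi$ minus the Euclidean angle $\measuredangle(\tilde\gamma(t_{j-1}),\tilde\gamma(t_j),\tilde\gamma(t_{j+1}))$. By the law of cosines, the Euclidean angle depends only on three chord lengths. The plan is therefore to bound these chord lengths using majorization and to compare the result with the curvature of $\gamma$ on $[t_{j-1},t_{j+1}]$.

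The key local estimate I would aim for is a "chord-length lower bound in terms of curvature" for $\C^{1,1}$ curves in $M$ (the content of Section~\ref{subsec:estimates}). For a $\C^{1,1}$ unit speed curve $\gamma$ on $[s,s+2h]$ with total curvature $\tau$, comparison in $\CAT(0)$ gives $|\gamma(s)\gamma(s+2h)|\ge$ the corresponding Euclidean bound, namely that of a circular arc or bow with the same total curvature. This follows from the Euclidean Schur theorem applied to the development of $\gamma$ via parallel transport, combined with the fact that nonpositive curvature only lengthens chords, as in a Rauch/Alexander--Bishop argument. Majorization then gives the same lower bound for $|\tilde\gamma(s)\tilde\gamma(s+2h)|$, while the lengths of the two halves stay $\le h$. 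Because $\tilde\gamma$ is convex, this forces the turning of $\tilde\gamma$ over $[s,s+2h]$ to be at most $\tau(\gamma[s,s+2h])+o(h)$. Summing over a partition and letting the mesh go to zero yields $\tau(\tilde\gamma(I))\le\tau(\gamma(I))$ in the $\C^{1,1}$ case. An equivalent and cleaner route is the differential version: show that the curvature measure of $\tilde\gamma$ is absolutely continuous and has density bounded by $\kappa$ almost everywhere, using $|\tilde\gamma(t-h)\tilde\gamma(t+h)|\ge 2h-\kappa(t)^2h^3/3+o(h^3)$ at Lebesgue points of $\kappa$.

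For general $\C^1$ curves, I would approximate $\gamma$ in $\C^1$ by $\C^{1,1}$ (even smooth) curves $\gamma_k$ whose total curvature on each interval converges appropriately. Mollifying the tangent after parallel transport to $\gamma(0)$ keeps $\tau(\gamma_k(I))\le\tau(\gamma(I))+o(1)$. Take majorizations $\tilde\gamma_k$ as above, normalized by a rigid motion. By Blaschke selection, a subsequence converges to a convex curve $\tilde\gamma$ that still properly majorizes $\gamma$, since chord inequalities pass to the limit. Total curvature is lower semicontinuous under uniform convergence of convex curves, which gives $\tau(\tilde\gamma(I))\le\tau(\gamma(I))$. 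The $\C^1$ regularity of $\tilde\gamma$ then follows exactly as in the proof of Lemma~\ref{lem:reshetnyak}.

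I expect the main obstacle to be the local step in the $\C^{1,1}$ case: converting the chord-length inequality into a pointwise bound on the turning of the convex majorant. The difficulty is that majorization controls only distances, so the argument must exploit convexity of $\tilde\gamma$ to rule out curvature concentrating, for example as corners. Here the $\C^1$ property from Lemma~\ref{lem:reshetnyak} and the sharp third-order chord expansion will be essential.
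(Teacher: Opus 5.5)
Your overall architecture matches the paper's: settle the $\C^{1,1}$ case, then approximate by $\C^{1,1}$ curves and pass to a limit of convex majorants using compactness and semicontinuity of turning. The paper uses rounded geodesic polygons and Helly's theorem on turning angles. Your mollification plus lower semicontinuity also works, provided you take limits of the parametrized majorants (Arzel\`a--Ascoli, not Blaschke selection on their images) so that the chord inequalities pass to the limit.

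The genuine gap is the $\C^{1,1}$ case, which you flag as the main obstacle but do not resolve. Your first route fails. A lower bound on the single chord $|\tilde\gamma(s)\tilde\gamma(s+2h)|$ does not bound the turning of a convex arc over $[s,s+2h]$: the turning may be concentrated near an endpoint while the chord stays essentially $2h$. Such a bound does control the exterior angle of the inscribed polygon at the middle vertex. However, the intervals $[t_{j-1},t_{j+1}]$ then overlap, and summing yields only $\tau(\tilde\gamma(I))\le 2\tau(\gamma(I))$. Moreover, the comparison you invoke for $\gamma$ is asserted, not proved. With total curvature alone the extremal is the bent segment, with chord $2h\cos(\tau/2)$. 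A circular arc with the same total curvature has a longer chord, so it is not a valid lower bound even in $\R^2$.

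The differential route you mention as an alternative is the paper's, but it lacks exactly the step that makes it work. To have $\tau(\tilde\gamma(I))=\int_I\tilde\kappa$ you must rule out a singular part in the turning measure of $\tilde\gamma$. The tools you call essential do not do this:
\begin{itemize}
\item The $\C^1$ property from Lemma~\ref{lem:reshetnyak} excludes only atoms (corners), not Cantor-type turning.
\item The expansion $2h-\kappa(t)^2h^3/3+o(h^3)$ holds only for almost every $t$, with $o(h^3)$ not uniform in $t$. It says nothing on the null set where a singular part would live.
\end{itemize}

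The missing ingredient is a \emph{uniform} estimate. For $\C^{1,1}$ $\gamma$ there are $C,h_0>0$ with $|\gamma(t-h)\gamma(t+h)|\ge 2h-Ch^3$ for all $t$ and all $h<h_0$. This is Lemma~\ref{lem:gamma2}, proved via normal coordinates with bounds uniform along the curve and the fact that $\exp_o$ is noncontracting since $K\le 0$. Majorization transfers this bound to $\tilde\gamma$. Set $u=\tilde\gamma(t+h)-\tilde\gamma(t)$ and $v=\tilde\gamma(t)-\tilde\gamma(t-h)$, so $|u|,|v|\le h$. The parallelogram law gives $|u-v|^2\le 4h^2-(2h-Ch^3)^2\le Ch^4$. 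Hence the second differences of $\tilde\gamma$ are $O(h^2)$, and $\tilde\gamma$ is $\C^{1,1}$. Only then does comparing the sharp expansions of Lemma~\ref{prop:riemann}, at points where both curves are twice differentiable, give $\tilde\kappa\le\kappa$ almost everywhere and hence $\tau(\tilde\gamma(I))\le\tau(\gamma(I))$.

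A correct local bent-segment comparison would in fact yield this uniform bound, since $2h\cos(\|\kappa\|_\infty h)\ge 2h-\|\kappa\|_\infty^2h^3$. Its role is to give regularity of $\tilde\gamma$, not to be summed.
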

  
 We will show below (Lemma \ref{prop:tildegamma2}) that when $\gamma$ is $\C^{1,1}$, then \emph{every} majorization $\tilde\gamma$ is curvature nonincreasing. This may also be true in the $\C^1$ case, which would generalize the above proposition, but we do not need that stronger result here.

\subsection{Curvature-chord estimates}\label{subsec:estimates}
To prove Proposition \ref{prop:tau} we need the following estimates for the chord-length  of curves in $M^n$.
Given a nonnegative quantity $\rho \to 0$, we use the standard notation $f=O(\rho^m)$ if $|f|\le C\rho^m$ for some constant $C>0$ and small $\rho$, and $f=o(\rho^m)$ if $f/\rho^m\to 0$. These estimates are understood componentwise for vector-valued quantities.  The following  result holds in all Riemannian manifolds.

\begin{lemma}\label{lem:2point}
Choose normal coordinates
centered at a point $o$ of $M^n$. Then for $a,b\in T_oM\simeq\R^n$
$$
\big|\exp_o(a)\exp_o(b)\big|^2
=
|ab|^2-\frac13\,R(a,b,b,a)+O\!\left((|a|+|b|)^5\right).
$$
\end{lemma}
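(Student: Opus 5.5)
The plan is to expand the squared distance function in normal coordinates around $o$ and evaluate it along the straight segment from $a$ to $b$. Work in normal coordinates $x$ centered at $o$, so that $g_{ij}(x)=\delta_{ij}-\frac13R_{ikl j}x^kx^l+O(|x|^3)$. Set $\rho:=|a|+|b|$ and let $c(t):=\exp_o\big(a+t(b-a)\big)$, $t\in[0,1]$. In coordinates, $c$ is the straight segment from $a$ to $b$.

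Its length squared satisfies
$$
L(c)^2\le\int_0^1 g_{c(t)}(b-a,b-a)\,dt=|ab|^2-\frac13\int_0^1R\big(x(t),b-a,b-a,x(t)\big)\,dt+O(\rho^5),
$$
where $x(t)=a+t(b-a)$ and $|b-a|\le\rho$. Since $R(x,v,v,x)$ with $v=b-a$ is unchanged when $x$ is replaced by $x+sv$, we get $R(x(t),v,v,x(t))=R(a,v,v,a)=R(a,b,b,a)$. Hence the integrand is constant, and $L(c)^2\le|ab|^2-\frac13R(a,b,b,a)+O(\rho^5)$. This gives the upper bound for $|\exp_o(a)\exp_o(b)|^2$.

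For the matching lower bound, I will use the true geodesic $\sigma$ joining $\exp_o(a)$ to $\exp_o(b)$, which in coordinates is a curve $y(t)$ with endpoints $a,b$. Its energy is $E=\int_0^1 g_{y}(\dot y,\dot y)\,dt=d^2$, where $d=|\exp_o(a)\exp_o(b)|$. The geodesic equation in normal coordinates gives $\ddot y=-\Upgamma(y)(\dot y,\dot y)$ with $\Upgamma(y)=O(|y|)$. Together with $|\dot y|=O(\rho)$ and $|y|=O(\rho)$ (the geodesic stays in the convex ball), this yields $|\ddot y|=O(\rho^3)$. Therefore $y(t)=x(t)+O(\rho^3)$ and $\dot y=(b-a)+O(\rho^3)$.

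Now expand $E$. We have $\int|\dot y|^2\ge|b-a|^2$, since straight lines minimize Euclidean energy with fixed endpoints. The curvature term is $-\frac13\int R(y,\dot y,\dot y,y)\,dt=-\frac13R(a,b,b,a)+O(\rho\cdot\rho\cdot\rho\cdot\rho^3)=-\frac13R(a,b,b,a)+O(\rho^6)$, and the cubic remainder contributes $O(\rho^5)$. So $d^2\ge|ab|^2-\frac13R(a,b,b,a)+O(\rho^5)$. Combined with the upper bound, this proves the identity.

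The main obstacle is the a priori control on the geodesic, namely that it stays $O(\rho^3)$-close to the chord in $C^1$. This needs $|\dot y|=O(\rho)$, which follows from $g$ being uniformly comparable to the Euclidean metric on a small ball and $E=d^2\le C\rho^2$, together with constant speed. A Gronwall-type bound on $\ddot y$ then finishes it. The remaining steps are only Taylor expansion of the metric, for which I would take the standard normal-coordinate expansion as given.
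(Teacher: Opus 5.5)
Your argument is correct, and its lower-bound half is exactly the paper's proof: expand the energy of the actual geodesic $x(t)$ in normal coordinates, use $\ddot x=O(\rho^3)$ to get $x=\ell+O(\rho^3)$ and $\dot x=v+O(\rho^3)$, then reduce the curvature term to $R(a,b,b,a)$ by multilinearity and skew-symmetry. The one difference is that the paper computes $\int_0^1|\dot x|^2\,dt=|v|^2+2\langle v,\eta(1)-\eta(0)\rangle+\int_0^1|\dot\eta|^2\,dt=|ab|^2+O(\rho^6)$ directly, so it gets equality at once; your separate upper bound via the straight segment, and the Gronwall remark, are harmless but unnecessary.
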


\begin{proof}
Let $x\colon[0,1]\to\R^n$ be the coordinate representation of the geodesic segment $\exp_o(a)\exp_o(b)$ with
$
x(0)=a, x(1)=b.
$
Then
$$
\big|\exp_o(a)\exp_o(b)\big|^2=\int_0^1 g_{x(t)}(\dot x,\dot x)\,dt .
$$
In normal coordinates the metric
$
g_{ij}(x)=\delta_{ij}-\frac13 R_{ikjl}(o)x^k x^l+O(|x|^3),
$
which implies
$
g_x(u,u)=|u|^2-\frac13 R(x,u,u,x)+O(|x|^3|u|^2).
$
Set $\rho:=|a|+|b|$. If $|a|$ and $|b|$ are sufficiently small, the geodesic ball $B\subset M$ of radius $\rho$ centered at $o$ is convex. Therefore $\exp_o(a)\exp_o(b)\subset B$. It follows that $|x|=O(\rho)$ and $|\dot x|=O(\rho)$. Hence
$$
g_{x(t)}(\dot x,\dot x)
=
|\dot x|^2-\frac13 R(x,\dot x,\dot x,x)+O(\rho^5).
$$
The geodesic equation states that
$
\ddot x^k+\Upgamma^k_{ij}(x)\dot x^i\dot x^j=0,
$
where $\Upgamma^k_{ij}(x)=O(|x|)$ since we are in normal coordinates. Since $|x|=O(\rho)$ and $|\dot x|=O(\rho)$, it follows that
$
\ddot x=O(\rho^3).
$
Set $v:=b-a$, $\ell(t):=a+tv$, and $\eta(t):=x(t)-\ell(t)$. Then
$\ddot\eta=\ddot x=O(\rho^3)$. 
Since $\eta(0)=\eta(1)=0$, integration gives
$\dot\eta=O(\rho^3)$, and $\eta=O(\rho^3)$.
Since 
$
|\dot x|^2=|v+\dot\eta|^2
=
|v|^2+2\langle v,\dot\eta\rangle+|\dot\eta|^2,
$
$$
\int_0^1 |\dot x|^2\,dt
=
|v|^2+2\big\langle v, \eta(1)-\eta(0)\big\rangle+\int_0^1 |\dot\eta|^2 dt
=
|ab|^2+0+O(\rho^6).
$$
Since $x=\ell+\eta=\ell+O(\rho^3)$  and $\dot x=v+\dot\eta=v+O(\rho^3)$,  the multilinearity of $R$ gives
$$
\int_0^1 R(x,\dot x,\dot x,x)\,dt
=
\int_0^1 R(\ell,v,v,\ell)\,dt+O(\rho^6).
$$
Finally we compute that 
$$
R(\ell,v,v,\ell)=R(a+tv,v,v,a+tv)=R(a,v,v,a)=R(a,b-a,b-a,a)=R(a,b,b,a),
$$
which completes the proof.
\end{proof}

Using the last lemma we next obtain the key relation we need between geodesic curvature $\kappa$ and chord length of curves, which again holds in all Riemannian manifolds. 

\begin{lemma}\label{prop:riemann}
 Let $\gamma\colon[0,\ell]\to M^n$ be a $\C^1$ unit speed curve and suppose that $t\in(0,\ell)$ is a twice differentiable point of $\gamma$.
Then, for sufficiently small $h>0$
$$
|\gamma(t-h)\gamma(t+h)|
=
2h-\frac{\kappa(t)^2}{3}h^3+o(h^3).
$$
\end{lemma}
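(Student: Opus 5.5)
Our plan is to work in normal coordinates centered at $o:=\gamma(t)$ and apply Lemma~\ref{lem:2point} with $a:=\exp_o^{-1}(\gamma(t-h))$ and $b:=\exp_o^{-1}(\gamma(t+h))$. Let $x(s):=\exp_o^{-1}(\gamma(t+s))$ be the coordinate representation of $\gamma$ near $t$. Since $t$ is a twice differentiable point, we have the second-order Taylor expansion
$$
x(s)=sT+\tfrac{s^2}{2}N+o(s^2),
$$
where $T=x'(0)$ is a unit vector and $N=x''(0)$. In normal coordinates the Christoffel symbols vanish at $o$. Hence the covariant acceleration of $\gamma$ at $t$ equals $N$, so $|N|=\kappa(t)$. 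Differentiating $\langle x',x'\rangle_g=1$ at $s=0$, and using that $\partial g_{ij}(o)=0$, gives $\langle T,N\rangle=0$.

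Next we compute the Euclidean chord. We have $b-a=x(h)-x(-h)$, and the even-order terms cancel. A second-order expansion alone yields only $b-a=2hT+o(h^2)$, which gives $|b-a|^2=4h^2+o(h^3)\cdot h^{-1}$ accuracy at best; this is too weak. This is the main obstacle: we need the chord up to $o(h^3)$, which seems to require third-order information.

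To get around it, we will not expand $x$ to third order. Instead we use the unit-speed condition directly: $|x'(s)|_g=1$ with $g_{ij}=\delta_{ij}+O(|x|^2)=\delta_{ij}+O(s^2)$. This gives $|x'(s)|=1+O(s^2)$, and the $O(s^2)$ term is controlled by the metric expansion, so it is $-\frac16R(x,x',x',x)+o(s^2)=o(s^2)$, because $x\parallel x'$ to leading order. Thus $|x'(s)|^2=1+o(s^2)$. We also know $x'(s)=T+sN+o(s)$, which holds because $x$ is twice differentiable at $0$; this needs only differentiability of $x'$ at $0$.

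With these two facts, we write
$$
|b-a|^2=\int_{-h}^{h}\!\int_{-h}^{h}\langle x'(u),x'(v)\rangle\,du\,dv,
$$
and use the identity $\langle x'(u),x'(v)\rangle=\tfrac12\bigl(|x'(u)|^2+|x'(v)|^2-|x'(u)-x'(v)|^2\bigr)$. Here $|x'(u)-x'(v)|^2=(u-v)^2\kappa^2+o(h^2)$. Integrating gives
$$
|b-a|^2=4h^2-\tfrac12\kappa^2\cdot\tfrac{8h^4}{3}+o(h^4)=4h^2-\tfrac43\kappa^2h^4+o(h^4).
$$
The curvature correction in Lemma~\ref{lem:2point} is $R(a,b,b,a)$. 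Since $a=-hT+O(h^2)$ and $b=hT+O(h^2)$, we get $R(a,b,b,a)=O(h^5)$, because $R(T,T,\cdot,\cdot)=0$. The error term $O((|a|+|b|)^5)$ is also $O(h^5)$. So Lemma~\ref{lem:2point} yields
$$
|\gamma(t-h)\gamma(t+h)|^2=4h^2-\tfrac43\kappa^2h^4+o(h^4).
$$
Taking square roots, $\sqrt{4h^2-\frac43\kappa^2h^4}=2h-\frac{\kappa^2}{3}h^3+o(h^3)$, which proves the claim.
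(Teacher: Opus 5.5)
Your proof is correct and follows essentially the same route as the paper: normal coordinates at $\gamma(t)$, the unit-speed condition combined with the metric expansion to get $|x'(s)|^2=1+o(s^2)$, the expansion $x'(s)=T+sN+o(s)$ from twice differentiability, and Lemma~\ref{lem:2point} with the $R(a,b,b,a)$ term negligible. The only difference is cosmetic: you obtain the Euclidean chord from $\iint\langle x'(u),x'(v)\rangle\,du\,dv$ via the polarization identity, whereas the paper integrates the $e_1$-component of $\bar\gamma'$ and notes that the orthogonal component of $\bar\gamma(h)-\bar\gamma(-h)$ is $o(h^2)$.
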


\begin{proof}
Choose normal coordinates centered at $o:=\gamma(t)$. Since $\gamma$ has unit speed, we
have $\langle \nabla_{\gamma'(t)}\gamma', \gamma'(t)\rangle = 0$. Thus we may assume that
$$
\gamma'(t)=e_1,\qquad \nabla_{\gamma'(t)}\gamma'=\kappa(t)e_2.
$$
Set $\ol\gamma(s):=\exp_o^{-1}(\gamma(t+s))$. Since the Christoffel symbols vanish at the origin in normal
coordinates,
$
\kappa_0:=|\ol\gamma''(0)| = |\nabla_{\gamma'(t)}\gamma'| = \kappa(t),
$
and we have
$$
\ol\gamma(h)-\ol\gamma(-h)
=
\Bigl(he_1+\frac{\kappa_0}{2}h^2e_2+o(h^2)\Bigr)
-
\Bigl(-he_1+\frac{\kappa_0}{2}h^2e_2+o(h^2)\Bigr)
=
2he_1+o(h^2).
$$
Recall that $g_x(u,u)=|u|^2-\frac13R(x,u,u,x)+O(|x|^3|u|^2)$. Since $\ol\gamma(s)=se_1+O(s^2)$, and
$\ol\gamma'(s)=e_1+O(s)$, we have $R(\ol\gamma(s), \ol\gamma'(s), \ol\gamma'(s), \ol\gamma(s))=O(s^4)$. Also
$|\ol\gamma(s)|^3|\ol\gamma'(s)|^2=O(s^3)$. Therefore
$$
1 = g_{\ol\gamma(s)}(\ol\gamma'(s), \ol\gamma'(s)) = |\ol\gamma'(s)|^2 + O(s^3).
$$
Now write $\ol\gamma'(s)=e_1+\kappa_0s\,e_2+r(s)$, where $r(s)=o(s)$. Then
$$
|\ol\gamma'(s)|^2 = 1 + 2\langle r(s), e_1\rangle + \kappa_0^2s^2 + o(s^2).
$$
Since $|\ol\gamma'(s)|^2 = 1+O(s^3)$, it follows that $\langle r(s), e_1\rangle = -\kappa_0^2s^2/2 + o(s^2)$. Hence
$$
A(h):=\langle \ol\gamma(h)-\ol\gamma(-h), e_1\rangle
=
\int_{-h}^h \langle \ol\gamma'(s), e_1\rangle\, ds
=
2h-\frac{\kappa_0^2}{3}h^3+o(h^3).
$$
Since $\ol\gamma(h)-\ol\gamma(-h)=2he_1+o(h^2)$, its component orthogonal to $e_1$ is $o(h^2)$. Therefore
$
|\ol\gamma(h)\ol\gamma(-h)|^2 = A(h)^2 + o(h^4),
$
and consequently
$$
|\ol\gamma(h)\ol\gamma(-h)|
=
2h-\frac{\kappa_0^2}{3}h^3+o(h^3).
$$
Now apply Lemma \ref{lem:2point} to $a=\ol\gamma(-h)$ and $b=\ol\gamma(h)$. Since $\ol\gamma(h)=he_1+O(h^2)$, we have
$R(\ol\gamma(-h), \ol\gamma(h), \ol\gamma(h), \ol\gamma(-h))=O(h^6)$, since the $h^4$ and $h^5$ terms vanish by the skew-symmetry of $R$. Thus
$$
|\gamma(t-h)\gamma(t+h)|^2
=
|\ol\gamma(-h)\ol\gamma(h)|^2 + O(h^5)
=
4h^2-\frac{4\kappa_0^2}{3}h^4+o(h^4).
$$
Taking square roots completes the proof.
\end{proof}

In addition to the pointwise estimate obtained above we also need the following weaker but uniform  chord-length estimate. We say that a curve is $\C^{1,1}$ if it is $\C^{1,1}$ in some (and therefore every) local coordinate chart.
 
\begin{lemma}\label{lem:gamma2}
Let $\gamma\colon[0,\ell]\to M^n$ be a  $\C^{1,1}$ unit speed curve. Then
there exist constants $C>0$ and $h_0>0$ such that for every $t\in (0,\ell)$ and every $h\in(0,h_0)$ with $t\pm h\in[0,\ell]$,
$$
|\gamma(t-h)\gamma(t+h)| \ge 2h - C h^3 .
$$
\end{lemma}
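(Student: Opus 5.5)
The idea is to make the pointwise argument of Lemma \ref{prop:riemann} uniform: replace the pointwise second-order Taylor expansion by the uniform Lipschitz bound on $\gamma'$, and handle the metric correction with Lemma \ref{lem:2point}, whose error constant depends only on curvature bounds on a compact set. Since $\gamma([0,\ell])$ is compact, I would fix a compact neighborhood $N$ of it and $r_0>0$ smaller than the convexity radius on $N$. Normal coordinates centered at any $o\in\gamma([0,\ell])$ then satisfy
$$
g_{ij}(x)=\delta_{ij}-\tfrac13R_{ikjl}(o)x^kx^l+O(|x|^3)
$$
and $\Upgamma(x)=O(|x|)$, with constants independent of $o$, because the Taylor remainders are controlled by derivatives of the metric on $N$. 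The constant in Lemma \ref{lem:2point} is also uniform in $o$; one just reruns its proof with these uniform bounds.

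Fix $t$ and $h$, set $o:=\gamma(t)$, and write $\ol\gamma(s):=\exp_o^{-1}(\gamma(t+s))$. The key uniform input is that $\ol\gamma$ is $\C^{1,1}$ with a Lipschitz constant $L$ for $\ol\gamma'$ independent of $t$. This holds because $\gamma$ is $\C^{1,1}$ in fixed charts covering the curve, and $\exp_o^{-1}$ is a smooth change of coordinates depending smoothly on $o$, with uniform $\C^2$ bounds.

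Since $\ol\gamma'(0)=e_1$, we get $|\ol\gamma'(s)-e_1|\le L|s|$ and $|\ol\gamma(s)|\le |s|$. We also have $1=g_{\ol\gamma(s)}(\ol\gamma',\ol\gamma')=|\ol\gamma'|^2+O(s^2)$, since the curvature term is $O(|x|^2|u|^2)$. From $|\ol\gamma'-e_1|^2=|\ol\gamma'|^2-2\langle\ol\gamma',e_1\rangle+1$ it follows that
$$
\langle\ol\gamma'(s),e_1\rangle=1-\tfrac12|\ol\gamma'(s)-e_1|^2+O(s^2)\ge 1-C's^2.
$$
Integrating from $-h$ to $h$ gives
$$
|\ol\gamma(h)-\ol\gamma(-h)|\ge\langle\ol\gamma(h)-\ol\gamma(-h),e_1\rangle\ge 2h-\tfrac{2C'}{3}h^3.
$$

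Next apply Lemma \ref{lem:2point} with $a=\ol\gamma(-h)$ and $b=\ol\gamma(h)$. By skew-symmetry, $R(a,b,b,a)=R(a,b-a,b-a,a)$. Write $a=-he_1+O(h^2)$ and $b-a=2he_1+O(h^2)$, with $O$ uniform thanks to $L$. Every term containing $R(e_1,e_1,\cdot,\cdot)$ vanishes, so $R(a,b,b,a)=O(h^5)$, and hence
$$
|\gamma(t-h)\gamma(t+h)|^2\ge 4h^2-C''h^4.
$$
Taking square roots gives $|\gamma(t-h)\gamma(t+h)|\ge 2h-Ch^3$ for $h<h_0$, with $h_0$ chosen so that $2h<r_0$.

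The main obstacle is the bookkeeping that makes every constant uniform in the center $o=\gamma(t)$. In particular one must check that the Lipschitz constant of $\ol\gamma'$ in the moving normal charts is bounded independently of $t$. I would handle this by viewing $(o,x)\mapsto\exp_o(x)$ as a smooth map on a compact set and invoking the chain rule for $\C^{1,1}$ maps. Only a lower bound is needed, which is why the $o(h^3)$ terms of Lemma \ref{prop:riemann} can be replaced here by $O(h^3)$.
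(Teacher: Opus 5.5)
Your proof is correct. Its first half is essentially the paper's, with a different uniformity argument. The paper bounds $\ol\gamma''$ uniformly via the essential boundedness of $\nabla_{\ol\gamma'}\ol\gamma'$ and uniform bounds on the Christoffel symbols. You get a uniform Lipschitz constant for $\ol\gamma'$ from the chain rule applied to the smooth map $(o,p)\mapsto\exp_o^{-1}(p)$. Both arguments then derive $\langle\ol\gamma'(s),e_1\rangle\ge 1-C's^2$ and hence the Euclidean estimate $|\ol\gamma(h)-\ol\gamma(-h)|\ge 2h-Ch^3$ in $T_oM$.

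The routes diverge in converting this Euclidean chord into the Riemannian chord $|\gamma(t-h)\gamma(t+h)|$. The paper uses $K\le 0$: $\exp_o$ is noncontracting, so the Riemannian chord dominates the Euclidean one with no error term and nothing further to check. You use the expansion of Lemma~\ref{lem:2point}. This costs you a check that its $O(\rho^5)$ constant is uniform in $o$, but it never uses the sign of the curvature. So your argument gives the estimate (locally) in any Riemannian manifold, matching the generality of Lemmas~\ref{lem:2point} and~\ref{prop:riemann}.

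Two simplifications are available in your route. First, the skew-symmetry computation is unnecessary: the crude bound $|R(a,b,b,a)|\le C|a|^2|b|^2=O(h^4)$ already gives $|\gamma(t-h)\gamma(t+h)|^2\ge 4h^2-C''h^4$. Second, Lemma~\ref{lem:2point} can be bypassed altogether. The minimizing geodesic joining $\gamma(t\pm h)$ stays in the convex ball of radius $h$ about $o$, where $g_x(u,u)\ge(1-C|x|^2)|u|^2$ uniformly. Hence its length is at least $(1-Ch^2)\,|\ol\gamma(h)-\ol\gamma(-h)|\ge 2h-C'h^3$, using only the uniform bound $g_{ij}=\delta_{ij}+O(|x|^2)$ that the paper already has.
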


\begin{proof}
Since
$\gamma([0,\ell])$ is compact and $M$ is smooth,  in normal
coordinates centered at $o:=\gamma(t)$ we have the uniform estimates
$$
g_{ij}(x)=\delta_{ij}+O(|x|^2),\qquad \Upgamma_{ij}^k(x)=O(|x|),
$$
for all points $x$ in a ball $B_{h_0}(0)\subset T_oM$ independent of $t$.
Let
$
\ol \gamma(s):=\exp_o^{-1}(\gamma(t+s)).
$
Since $\gamma$ has unit speed, we have
$
|\overline{\gamma}(s)| = |\gamma(t)\gamma(t +s)| \le |s|.
$
Furthermore, 
we may choose $h_0$ so small that  for all $|s| < h_0$, we have the uniform bound
$
|\overline{\gamma}'(s)|^2 \le 2.
$
Since $\ol\gamma$ is $\C^{1,1}$,  $\nabla_{\overline{\gamma}'}\overline{\gamma}'$ is essentially bounded. In normal coordinates centered at $o$,
$$
(\nabla_{\overline{\gamma}'(s)}\overline{\gamma}')_k
=
\overline{\gamma}''_k(s)+\Upgamma_{ij}^k(\overline{\gamma}(s))\overline{\gamma}_i'(s)\overline{\gamma}_j'(s).
$$
Since the Christoffel symbols are uniformly bounded on $B_{h_0}(0)$, as is $|\ol\gamma'|$, it
follows that $\overline{\gamma}''$ is essentially bounded. Consequently, if  $e_1 := \overline{\gamma}'(0)$, we have
$$
\overline{\gamma}'(s)=\overline{\gamma}'(0)+\int_0^s\ol\gamma''(u)\,du=e_1+O(s).
$$

Next, since $|\overline{\gamma}(s)| \le |s|$, we have
$g_{\overline{\gamma}(s)}(\cdot,\cdot)=|\cdot|^2+O(|\overline{\gamma}(s)|^2)|\cdot|^2=|\cdot|^2+O(s^2)|\cdot|^2$.
It follows that $1=|\overline{\gamma}'(s)|^2 +O(s^2)|\overline{\gamma}'(s)|^2$. Since $|\overline{\gamma}'(s)|^2 \le 2$, we conclude that
$$
|\overline{\gamma}'(s)|^2 = 1 + O(s^2).
$$

 Now writing $\overline{\gamma}'(s)=e_1+\eta(s)$ with $|\eta(s)|=O(s)$, we have
$|\overline{\gamma}'(s)|^2=1+2\langle \eta(s),e_1\rangle+|\eta(s)|^2$.
It follows that $\langle \eta(s),e_1\rangle=O(s^2)$. So
$\overline{\gamma}_1'(s):=\langle \overline{\gamma}'(s),e_1\rangle=1+O(s^2)$.
Integration yields
$
\overline{\gamma}_1(h)-\overline{\gamma}_1(-h)=2h+O(h^3).
$
So
$$
|\overline{\gamma}(h)\overline{\gamma}(-h)| \ge |\overline{\gamma}_1(h)-\overline{\gamma}_1(-h)|\ge 2h-Ch^3,
$$
where $C$ is independent of $t$ since the preceding $O(\cdot)$-estimates  are independent of $t$, due to uniform bounds on $g_{ij}$, $\Upgamma_{ij}^k$, $|\ol \gamma'|$, and $|\nabla_{\ol \gamma'}\ol\gamma'|$.
Since $K\leq 0$ in $M$, the exponential map is noncontracting. So
$
|\gamma(t-h)\gamma(t+h)|
\ge |\ol \gamma(h)\ol \gamma(-h)|\ge 2h-Ch^3,
$
as desired. 
\end{proof}

\subsection{The $\C^{1,1}$ case}\label{subsec:C11}
Using the preceding estimates, we now establish the following result which yields Proposition \ref{prop:tau} in the $\C^{1,1}$ case:
 
\begin{lemma}\label{prop:tildegamma2}
Let $\gamma\colon[0,\ell]\to M^n$ be a  $\C^{1,1}$ curve, and $\tilde\gamma\colon[0,\ell]\to\R^2$ be any chord-convex curve which majorizes $\gamma$. Then $\tilde\gamma$ is also $\C^{1,1}$, and the geodesic curvatures $\tilde\kappa\leq \kappa$ almost everywhere.
\end{lemma}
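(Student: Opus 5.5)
The plan is to compare chord lengths of $\tilde\gamma$ and $\gamma$ at symmetric parameter pairs $t\pm h$, using the two estimates just established. Since $\tilde\gamma$ majorizes $\gamma$ and $\gamma$ has unit speed, the argument of Lemma~\ref{lem:reshetnyak} applies: $\tilde\gamma$ is $1$-Lipschitz and, for each interior $t$,
\[
|\tilde\gamma(t-h)\tilde\gamma(t+h)|\ \ge\ |\gamma(t-h)\gamma(t+h)|.
\]
Adding Lemma~\ref{lem:gamma2}, we get uniformly in $t$ and $h<h_0$ that
\[
2h-Ch^3\ \le\ |\tilde\gamma(t-h)\tilde\gamma(t+h)|\ \le\ 2h.
\]
We also have
\[
|\tilde\gamma(t-h)\tilde\gamma(t)|,\ |\tilde\gamma(t)\tilde\gamma(t+h)|\ \le\ h .
\]
This triple gives two things. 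First, $\tilde\gamma$ has unit speed: the lengths of inscribed polygons with fine mesh equal $\ell$ up to $O(h^2)$, so $\tilde\gamma$ is parametrized by arclength. Second, the planar triangle with vertices $\tilde\gamma(t-h),\tilde\gamma(t),\tilde\gamma(t+h)$ has its exterior angle $\phi$ at $\tilde\gamma(t)$ bounded by $O(h)$ uniformly. Indeed, by the law of cosines, a triangle with two sides at most $h$ and third side at least $2h-Ch^3$ satisfies $1-\cos\phi=O(h^2)$, hence $\phi\le C'h$.

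Since $\tilde\gamma$ is convex, its tangent turning on $[t-h,t+h]$ is controlled by such chord angles. More precisely, the turning of a convex arc between two chords is bounded by the angle between the chords. Summing over a partition of $[s_1,s_2]$ into pieces of length $2h$ then bounds the turning of the unit tangent by $C''|s_2-s_1|$. So $\tilde\gamma'$ is Lipschitz, and $\tilde\gamma$ is $\C^{1,1}$. This is the first assertion.

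For the curvature comparison, we use Rademacher's theorem. Almost every $t$ is a point where $\gamma$ is twice differentiable, since $\gamma'$ is Lipschitz in coordinates, and likewise for $\tilde\gamma$. At such a common point, Lemma~\ref{prop:riemann} applied to $\tilde\gamma$ in $\R^2$ gives
\[
|\tilde\gamma(t-h)\tilde\gamma(t+h)|=2h-\tfrac{\tilde\kappa(t)^2}{3}h^3+o(h^3).
\]
Applied to $\gamma$ in $M$, it gives
\[
|\gamma(t-h)\gamma(t+h)|=2h-\tfrac{\kappa(t)^2}{3}h^3+o(h^3).
\]
Majorization says the first quantity is at least the second. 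Dividing by $h^3$ and letting $h\to0$ yields $\tilde\kappa(t)^2\le\kappa(t)^2$, i.e.\ $\tilde\kappa\le\kappa$ almost everywhere. Here we use that $\C^{1,1}$ curves with unit speed have $\kappa=|\nabla_{\gamma'}\gamma'|$ defined almost everywhere, consistent with the definition of $\tau$.

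The main obstacle is the regularity step, namely turning the uniform $O(h)$ bound on discrete angles into a Lipschitz bound on $\tilde\gamma'$. The approach is to exploit convexity: for a convex planar arc, the total turning on $(t-h,t+h)$ is at most the angle between the supporting directions at the endpoints. That angle in turn is at most about twice the discrete exterior angle at scale $h$, once one uses that the one-sided tangents lie between adjacent chords. If this proves delicate, the fallback is a direct estimate. Write $\tilde\gamma'(t\pm h)-\tilde\gamma'(t)$ via the monotonicity of the tangent angle of the convex curve, and sandwich it between secant directions at scales $h$ and $2h$, each of which is within $O(h)$ of $\tilde\gamma'(t)$ by the triangle estimate.
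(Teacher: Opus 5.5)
Your proposal is correct. The curvature comparison (Rademacher's theorem plus Lemma~\ref{prop:riemann} applied to both curves at a common twice-differentiable point) is exactly the paper's; the difference is in the regularity step.

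The paper feeds the chord bound $|\tilde\gamma(t-h)\tilde\gamma(t+h)|\ge 2h-Ch^3$ into the parallelogram law, $|u-v|^2=2|u|^2+2|v|^2-|u+v|^2\le Ch^4$. This gives a uniform second-difference bound $|D_h^2\tilde\gamma(t)|\le Ch^2$. It then notes that for each coordinate $x$ the function $x+Ct^2/2$ is convex and $x-Ct^2/2$ is concave, which traps $x'$ and makes it Lipschitz. That argument never uses chord-convexity, so it shows that any unit-speed curve with uniformly nearly straight symmetric chords is $\C^{1,1}$.

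Your law-of-cosines bound $\phi=O(h)$ is the same inequality in angular form. You then need the monotone turning angle of the convex curve $\tilde\gamma$ to sum the discrete angles. In return you get a Lipschitz bound directly on the turning angle $\theta_{\tilde\gamma}$, which is the quantity used later in Proposition~\ref{prop:tau}.

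When writing it up, be careful with the chaining. The angle between the two chords meeting at $t$ only bounds the jump of the tangent at $t$, not the turning over $[t-h,t+h]$. The curve may turn sharply just after $t-h$ while those two chords are nearly parallel. The correct form of your ``between two chords'' principle uses chords outside the interval. For $0<s_1<s_2<\ell$, chain chords of parameter length $\delta=(s_2-s_1)/N$ from $s_1-\delta$ to $s_2+\delta$. Each of the $N+1$ junction angles is at most $C'\delta$, and the outer chord directions bracket the one-sided tangents, so $\theta(s_2^+)-\theta(s_1^-)\le C'(N+1)\delta$. Letting $N\to\infty$ gives the Lipschitz bound and rules out corners at once.

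Finally, the $1$-Lipschitz property of $\tilde\gamma$ comes from the standing unit-speed convention, not from majorization, which only bounds chords from below. So your separate derivation that $\tilde\gamma$ has unit speed is unnecessary.
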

\begin{proof}
By Lemma \ref{lem:gamma2} and the majorization property
$
|\tilde\gamma(t-h)\tilde\gamma(t+h)|\ge 2h-Ch^3,
$
for every $t\in (0,\ell)$ and small $h$.
 Set
$
u:=\tilde\gamma(t+h)-\tilde\gamma(t),
$
$
v:=\tilde\gamma(t)-\tilde\gamma(t-h).
$
Since $\tilde\gamma$ has unit speed, 
$|u|, |v|\le h$. Thus
$$
|u-v|^2=2|u|^2+2|v|^2-|u+v|^2
\le 4h^2-(2h-Ch^3)^2
=4Ch^4-C^2h^6
\le C h^4,
$$
where we relabel the constants $C$.
Letting $D_h^2 \phi(t):=\phi(t+h)-2\phi(t)+\phi(t-h)$ be the second symmetric difference, we obtain
$
|D_h^2\tilde\gamma(t)|
=
|u-v|
\le 
C h^2 .
$
Then for any coordinate function $x$ of $\tilde\gamma$
$$
|D_h^2x(t)|\le C h^2 .
$$
Define $\alpha(t):=x(t)+Ct^2/2$ and $\beta(t):=x(t)-Ct^2/2$. Then
$D_h^2\alpha(t)\ge0$ and $D^2_h\beta(t)\le0$. Thus  $\alpha$
is convex and $\beta$ is concave.
So $\alpha'$ is nondecreasing and
$\beta'$ is nonincreasing. It follows that
$-C(t-s)\le x'(t)-x'(s)\le C(t-s)$. Hence $x'$ and consequently $\tilde\gamma'$ is Lipschitz, as desired.

Next, to obtain the inequality between geodesic curvatures, fix $t\in(0,\ell)$ such that both $\tilde\gamma$ and $\gamma$ are twice differentiable at
$t$. By Lemma \ref{prop:riemann},
$$
2h-\frac{\tilde\kappa(t)^2}{3}h^3+o(h^3)
=|\tilde\gamma(t-h)\tilde\gamma(t+h)|
\geq
|\gamma(t-h)\gamma(t+h)|
=2h-\frac{\kappa(t)^2}{3}h^3+o(h^3)
.
$$
Thus
$
\tilde\kappa(t)\le \kappa(t),
$
since the geodesic curvatures are nonnegative. 
\end{proof}

\subsection{The general case}\label{subsec:proof}
Finally, we extend the $\C^{1,1}$ case to the $\C^1$ case by an approximation to finish the proof of Proposition \ref{prop:tau} and thereby obtain Theorem \ref{thm:schur}. For every $\C^1$ curve $\gamma\colon[0,\ell]\to\R^2$, with nonvanishing speed, there exists a continuous function $\theta_\gamma\colon[0,\ell]\to\R$, defined up to $2k\pi$, $k\in\mathbf{Z}$, such that 
$$
\gamma'(t)=|\gamma'(t)|\big(\cos(\theta_\gamma(t)),\sin(\theta_\gamma(t))\big).
$$
 We call $\theta_\gamma$ the \emph{turning angle}  of $\gamma$. Note that $\theta_\gamma$ is monotone  when $\gamma$ is convex.

\begin{proof}[Proof of Proposition \ref{prop:tau}]
Note that $\tau(\gamma)$ is the limit of the sum of the exterior angles  of  polygonal geodesic
approximations of $\gamma$. Rounding the corners of these polygonal curves, we obtain a family of
$\C^{1,1}$ curves $\gamma_i\colon[0,\ell]\to M$ with constant speed $v_i\to 1$ and the same endpoints
as $\gamma$, such that $\length(\gamma_i)\to\length(\gamma)$ and
$
\tau(\gamma_i(I))\to \tau(\gamma(I))
$
for every interval $I\subset[0,\ell]$.
It follows from Lemma \ref{lem:reshetnyak} that there are chord-convex curves $\widetilde\gamma_i\colon[0,\ell]\to\mathbf{R}^2$ with constant speed $v_i$ which properly majorize $\gamma_i$. By Lemma \ref{prop:tildegamma2},  $\widetilde\gamma_i$
are $\C^{1,1}$. We may assume that
$\widetilde\gamma_i(0)=(0,0)$ and $\widetilde\gamma_i(\ell)=(-d,0)$, for $d\geq 0$, and
$\tilde\gamma_i$ lies above the $x$-axis. 
Let $\theta_i:=\theta_{\tilde\gamma_i}$ be the turning angle of
$\widetilde\gamma_i$. So
$$
\widetilde\gamma_i(t)=\int_0^t v_i\big(\cos\theta_i(s),\sin\theta_i(s)\big)ds,
$$
 where $v_i$ is the speed of $\widetilde\gamma_i$. Since $\widetilde\gamma_i$ is chord-convex, $\theta_i$ is
nondecreasing and we may assume that $0\le \theta_i\le 2\pi$. Thus, by
Helly's selection theorem \cite{rudin1976}, after passing to a subsequence  $\theta_i(t)\to\theta(t)$
for every $t\in[0,\ell]$, where $\theta\colon[0,\ell]\to\mathbf{R}$ is a nondecreasing (and possibly discontinuous) function.
Now define
$$
\widetilde\gamma(t):=\int_0^t \big(\cos\theta(s),\sin\theta(s)\big)\,ds.
$$
Since  $\theta$ is nondecreasing, $\widetilde\gamma$ is convex.
Furthermore, since  $v_i\to 1$, the
dominated convergence theorem yields that $\widetilde\gamma_i\to\widetilde\gamma$ uniformly on
$[0,\ell]$. In particular,
$
\widetilde\gamma(0)=(0,0)
$
and
$
\widetilde\gamma(\ell)=(-d,0).
$
Further, for every $a,b\in[0,\ell]$,
$$
|\widetilde\gamma(a)-\widetilde\gamma(b)|
=
\lim_{i\to\infty}
|\widetilde\gamma_i(a)-\widetilde\gamma_i(b)|
\ge
\lim_{i\to\infty}
|\gamma_i(a)\gamma_i(b)|
=
|\gamma(a)\gamma(b)|.
$$
Thus $\widetilde\gamma$ majorizes $\gamma$. It follows that $\widetilde\gamma$ is differentiable, which in turn yields that it is $\C^1$ by convexity, as we showed in the proof of Lemma \ref{lem:reshetnyak}. Finally we check that $\theta$ is the turning angle of $\tilde\gamma$. Indeed since $\widetilde\gamma$ is $\C^1$, it has a well-defined turning angle $\vartheta\colon[0,\ell]\to[0,2\pi]$. Further,
$
\widetilde\gamma'(t)=(\cos\theta(t),\sin\theta(t))
$
for almost every $t$. On the other hand,
$
\widetilde\gamma'(t)=(\cos\vartheta(t),\sin\vartheta(t))
$
for every $t$. Since $\theta,\vartheta\in[0,2\pi]$, it follows that $\theta=\vartheta$
a.e. Since $\theta$ is nondecreasing and $\vartheta$ is continuous, we conclude that
$\theta=\vartheta$ everywhere.

Now let $I=[a,b]\subset[0,\ell]$. Since $\theta_i$ and $\theta$ are the turning angles of $\widetilde\gamma_i$ and $\widetilde\gamma$, 
$$
\tau(\widetilde\gamma(I))
=
\theta(b)-\theta(a)
=
\lim_{i\to\infty}\big(\theta_i(b)-\theta_i(a)\big)
=
\lim_{i\to\infty}\tau(\widetilde\gamma_i(I)).
$$
Furthermore, by Lemma \ref{prop:tildegamma2},
$
\tau(\widetilde\gamma_i(I))\le \tau(\gamma_i(I)).
$
Hence
$$
\tau(\widetilde\gamma(I))
=
\lim_{i\to\infty}\tau(\widetilde\gamma_i(I))
\le
\lim_{i\to\infty}\tau(\gamma_i(I))
=
\tau(\gamma(I)),
$$
which completes the proof.
\end{proof}
\section{Rigidity of Convex Bodies}
Here we establish a  rigidity result for the boundaries of convex bodies in Cartan-Hadamard manifolds. This result was established in \cite[Prop.\ 4.4]{ghomi2025-convexity} for $\C^2$ boundaries. Using Theorem \ref{thm:schur}, we generalize that result to the $\C^{1}$ case. 

\begin{proposition}\label{prop:nested}
Let  $C\subset  M^n$ and $C'\subset \R^n$  be convex bodies with $\C^{1}$ boundaries $\Gamma$ and $\Gamma'$ respectively. Suppose that there exists a $\C^1$ isometry $f\colon\Gamma\to\Gamma'$ which preserves the total curvature of $\C^1$ curves. Then $f$ extends to an isometry $C\to C'$. 
\end{proposition}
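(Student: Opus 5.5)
The argument is metric: first show that $f$ preserves chord lengths in both directions, then extend across the bodies using the Kirszbraun--Lang--Schroeder theorem together with Reshetnyak majorization. Concretely, the goal is to prove that for all $p,q\in\Gamma$,
$$
|pq|_M=|f(p)f(q)|_{\R^3}.
$$
For the extension step I would follow \cite[Prop.\ 4.4]{ghomi2025-convexity}. Once the chordal distances agree, $f^{-1}\colon\Gamma'\to\Gamma$ is $1$-Lipschitz from a subset of $\R^n$ into the $\CAT(0)$ space $M$. By Lang--Schroeder \cite{lang-schroeder1997} it extends to a $1$-Lipschitz map $g\colon C'\to M$, and since $C$ is convex we may compose with the nearest-point projection onto $C$ to assume $g(C')\subset C$. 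Symmetrically, $f$ extends to a $1$-Lipschitz map $C\to\R^n$, because $\R^n$ is also $\CAT(0)$. From here the argument of \cite{ghomi2025-convexity} gives an isometry: for $x\in C'$, lay a segment through $x$ with endpoints $p',q'\in\Gamma'$. Its length equals $|f^{-1}(p')f^{-1}(q')|$, and $g$ maps it $1$-Lipschitz onto a curve of equal endpoint distance, so it must map the segment onto the geodesic. Varying the chords through $x$ then shows that $g$ is distance preserving and onto $C$. I expect this step to transfer essentially verbatim, since it uses only chord data and convexity, not regularity.

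For the inequality $|pq|\le|f(p)f(q)|$, take the straight segment $\sigma$ from $f(p)$ to $f(q)$ in $C'$. If it lies in $\Gamma'$, its preimage $f^{-1}\circ\sigma$ is a curve in $\Gamma$ of the same length, which gives the bound. Otherwise I would use the $\C^1$ curve $\beta:=f^{-1}(\Gamma'\cap H)$, where $H$ is a plane through $f(p),f(q)$. The section $\Gamma'\cap H$ is a convex planar curve, and the arc from $f(p)$ to $f(q)$ is chord-convex. Applying Theorem \ref{thm:schur} with $\gamma_1$ that planar arc and $\gamma_2=\beta$ (same length and same total curvature on every subinterval, since $f$ is an isometry preserving $\tau$) gives $|pq|\ge|f(p)f(q)|$. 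That is the reverse inequality, which is also needed.

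The other direction is the main obstacle. Here I would use Reshetnyak majorization applied to a closed curve in $\Gamma$ formed by two arcs from $p$ to $q$ that are preimages of a planar section, with Proposition \ref{prop:tau} providing a chord-convex $\C^1$ curve with no more curvature. Comparing total curvatures, the majorant has turning $\le 2\pi$ while the section in $\R^3$ already has $2\pi$, which should force equality in the Schur comparison. Alternatively, apply Theorem \ref{thm:schur} with the roles reversed, letting $\gamma_1$ be a Reshetnyak majorant of a curve in $\Gamma$ through $p,q$, obtained via Lemma \ref{lem:reshetnyak} and Proposition \ref{prop:tau}, and comparing it against its $f$-image in $\R^3$. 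That image has the same curvature, so the planar Schur theorem gives $|f(p)f(q)|\ge$ chord of the majorant $=|pq|$. This last comparison is the step I would check most carefully.
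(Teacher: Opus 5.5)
You correctly reduce the statement to chord-length preservation. Your proof of $|pq|\ge|f(p)f(q)|$, obtained by applying Theorem~\ref{thm:schur} to $f^{-1}$ of an arc of a planar section of $\Gamma'$, is exactly the paper's first step. The gap is the reverse inequality $|pq|\le|f(p)f(q)|$, which you flag as the main obstacle but do not prove.

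Your second option fails. For an open arc $\beta$ from $p$ to $q$, Proposition~\ref{prop:tau} only gives $\tau(\tilde\beta(I))\le\tau(\beta(I))=\tau(f\circ\beta(I))$. So $f\circ\beta$ does not have ``the same curvature'' as the majorant $\tilde\beta$; it has at least as much. Theorem~\ref{thm:schur} with $\gamma_1=\tilde\beta$ and $\gamma_2=f\circ\beta$ needs the opposite inequality, and with the roles swapped it only returns $|pq|\ge|f(p)f(q)|$ again. Your first option gestures in the right direction but never says how equal total curvature $2\pi$ yields an inequality between chords. That fact alone says nothing about chords.

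The missing idea is to use the inequality you already have on \emph{all} pairs of points of the closed curve $\gamma:=f^{-1}(\gamma')$, where $\gamma':=\Pi\cap\Gamma'$. Let $\gamma''$ be a Reshetnyak majorant of $\gamma$. Then $|\gamma''(s)\gamma''(t)|\ge|\gamma(s)\gamma(t)|\ge|\gamma'(s)\gamma'(t)|$, so $\gamma''$ majorizes $\gamma'$. Two closed convex planar curves of equal length, one majorizing the other, are congruent \cite{brooks-strantzen1992}. Hence $\gamma'$ itself majorizes $\gamma$, which is the desired inequality.

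Your first option can also be repaired, but the decisive steps are absent from your proposal:
\begin{enumerate}
\item Apply Proposition~\ref{prop:tau} to the \emph{closed} curve $\gamma$ to get a curvature-nonincreasing closed majorant.
\item Check that this majorant has no corner at the closing point, by running the argument of Lemma~\ref{lem:reshetnyak} cyclically.
\item Observe that $\tau(\gamma''([0,a]))\le\tau(\gamma'([0,a]))$ and $\tau(\gamma''([a,\ell]))\le\tau(\gamma'([a,\ell]))$, with both totals equal to $2\pi$. This forces equality on every arc, so the turning angles agree and $\gamma''\cong\gamma'$.
\end{enumerate}

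Separately, you claim that $f$ extends to a $1$-Lipschitz map $C\to\R^n$ ``because $\R^n$ is $\CAT(0)$''. This is not a valid use of Lang--Schroeder, which requires a lower curvature bound on the domain. For example, send the three ends of a tripod with unit legs to an equilateral triangle of side $2$; no $1$-Lipschitz extension to the tripod exists. The claim is also unnecessary: any two points of $C'$ lie on a common chord, which your extension $g$ maps isometrically onto a geodesic.
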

\begin{proof}
For any $x\in\Gamma$ set $x':=f(x)$.  
By the proof of \cite[Lem.\ 4.2]{ghomi2025-convexity}, it suffices to show that $f$ preserves extrinsic distances or chord lengths, i.e., for every pair of points $x$, $y\in\Gamma$, 
$
|xy|= |x'y'|
$ 
(this follows from the generalized Kirszbraun extension theorem due to Lang-Schroeder \cite{lang-schroeder1997,akp2011}).

Let $\Pi\subset\R^n$ be a plane containing $x'y'$ which is transverse to $\Gamma'$. Then $\gamma':=\Pi\cap\Gamma'$ is a closed $\C^1$ convex curve in $\Pi$.  Let $\arc{x'y'}$ be one of the arcs connecting $x'$, $y'$ in $\gamma'$, and $\arc{xy}:=f^{-1}(\arc{x'y'})$ be the corresponding arc in $\gamma:=f^{-1}(\gamma')$. 
By assumption, $f\colon \arc{xy}\to\arc{x'y'}$ preserves the total curvature of all subsegments, as well as the arclength.
The curvature  of $\arc{x'y'}$ in $\R^n$ is the same as its curvature  in $\Pi$, which we may identify with $\R^2$. Thus, by Theorem \ref{thm:schur}, 
$$
|xy|\geq |x'y'|,
$$
for all pairs of points $x$, $y\in\Gamma$.
On the other hand, by Reshetnyak's theorem, there exists a convex curve $\gamma''\subset \Pi$ which majorizes $\gamma$. But, by the above inequality, all chords of $\gamma$ are at least as long as the corresponding chords of $\gamma'$.
Thus $\gamma''$ majorizes $\gamma'$.  Since $\gamma''$ and $\gamma'$ are both convex planar curves, it follows that they are congruent \cite{brooks-strantzen1992}. So $\gamma'$ majorizes $\gamma$, which yields
$$
|xy|\leq |x'y'|
$$ 
and completes the proof.
\end{proof}

Combining Propositions \ref{prop:nested} and \ref{prop:C11} immediately yields the following result which had been obtained earlier  in the $\C^3$ case \cite{ghomi2025-convexity}.

\begin{theorem}\label{thm:extension}
Let $\Gamma\subset M^3$ be a $\C^1$ convex surface. Suppose that $K$ vanishes on tangent planes of $\Gamma$. Then $K$ vanishes on the convex body bounded by $\Gamma$.
\end{theorem}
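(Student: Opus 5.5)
The plan is to chain Proposition \ref{prop:C11} with Proposition \ref{prop:nested}, and then use the Riemannian structure of the resulting isometry to read off the curvature.

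\textbf{Step 1: embed $\Gamma$ in Euclidean space.} Since $K$ vanishes on the tangent planes of $\Gamma$, Proposition \ref{prop:C11} provides a $\C^1$ convex isometric embedding $f\colon\Gamma\to\R^3$ that preserves the total curvature of all $\C^1$ curves. Its image $\Gamma':=f(\Gamma)$ is a convex surface; the point to check is that it is the $\C^1$ boundary of a convex body $C'\subset\R^3$. It is $\C^1$ because $f$ is a $\C^1$ immersion. It bounds a body with interior points because it is a closed convex surface, so it is not degenerate.

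\textbf{Step 2: extend to the bodies.} Let $C$ be the convex body bounded by $\Gamma$. Then $C\subset M^3$ and $C'\subset\R^3$ are convex bodies with $\C^1$ boundaries, and $f\colon\Gamma\to\Gamma'$ is a $\C^1$ isometry preserving total curvature. Proposition \ref{prop:nested} therefore applies, and $f$ extends to an isometry $F\colon C\to C'$. Here $F$ is a distance-preserving bijection for the restricted metrics, which agree with the induced length metrics because both bodies are convex.

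\textbf{Step 3: conclude flatness.} On the interior, $F$ restricts to a distance-preserving map between open subsets of the Riemannian manifolds $M$ and $\R^3$. By the Myers--Steenrod theorem, $F$ is a smooth Riemannian isometry on $\operatorname{int} C$, so $K$ vanishes there. Since $C$ is a convex body, $C$ is the closure of its interior, and continuity of $K$ on the Grassmann bundle gives $K=0$ on all of $C$, including $\Gamma$, which is in any case given.

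The substantive work lies entirely in the two propositions, so the main obstacle here is only bookkeeping: verifying that $\Gamma'$ satisfies the hypotheses of Proposition \ref{prop:nested}, and justifying the passage from a metric isometry to a smooth one. For the latter I would invoke Myers--Steenrod on the open interior, where both sides are smooth Riemannian manifolds, and then use density of the interior in $C$.
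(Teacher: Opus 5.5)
Your proposal is correct and follows the paper's argument exactly: the paper obtains Theorem \ref{thm:extension} by combining Proposition \ref{prop:C11} with Proposition \ref{prop:nested}, and treats the resulting isometry $C\to C'$ as immediately giving flatness. Your Myers--Steenrod-plus-density step makes that last deduction explicit. Note that $F$ maps $\operatorname{int}C$ onto $\operatorname{int}C'$ simply because $F$ is a bijection extending $f$ and $f(\Gamma)=\Gamma'$.
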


This result  implies that if the curvature of a Cartan-Hadamard manifold $M^3$ vanishes outside a compact set $X$, then it vanishes everywhere, for we may let $\Gamma$ be a convex hypersurface enclosing $X$. Results of this type, which are sometimes called ``gap theorems'' \cite{seshadri2009}, were first obtained by Greene-Wu \cite{greene-wu1982} and Gromov \cite[Sec.\ 3]{ballmann-gromov-schroeder}. See \cite{ghomi-spruck2023a,schroeder-strake1989a} for more results in this genre, and \cite{ghomi-stavroulakis2026b} for a recent variation.

\section{Proof of Theorem \ref{thm:main}}
We are now ready to establish the main result of this work. First we need one more definition. For any $\C^{1,1}$ surface $\Gamma\subset M^3$, the Gauss-Kronecker curvature is defined almost everywhere by Rademacher's theorem and thus the \emph{total (signed) curvature} is given by
$$
\G(\Gamma):=\int_{\Gamma}GK.
$$
If $\Gamma$ is convex, let $d\colon M\to\R$ be the distance function from the convex body bounded by $\Gamma$. Then the \emph{outer parallel surface} $\Gamma^t$ of $\Gamma$ at distance $t> 0$ is defined as $d^{-1}(t)$. Note that $\Gamma^t$ is $\C^{1,1}$ \cite[Prop.\ 2.7]{ghomi-spruck2022} and thus $\G(\Gamma^t)$ is well-defined. We set
$$
\G(\Gamma):=\lim_{\;\;t\to 0^+}\G(\Gamma^t).
$$
This limit exists since $t\mapsto \G(\Gamma^t)$ is nondecreasing \cite[Cor.\ 5.3]{ghomi-spruck2022} and $\G(\Gamma^t)\geq 0$ by convexity of the distance function \cite{bridson-haefliger1999}.

\subsection{The inequality}
To establish \eqref{eq:main} let $\Gamma_0:=\partial\conv(\Gamma)$ be the boundary of the convex hull of $\Gamma$, and $\Gamma_0^t$ denote the outer parallel surfaces of $\Gamma_0$. 
By Gauss' equation 
$$
GK_{\Gamma_0^t}(p)=K_{\Gamma_0^t}(p)-K(T_p\Gamma_0^t),
$$ 
for almost all $p\in \Gamma_0^t$, where $K_{\Gamma_0^t}$ is the intrinsic curvature of $\Gamma_0^t$. Integrating this equation over $\Gamma_0^t$, and using the Gauss-Bonnet theorem, we obtain
$$
\G(\Gamma_0^t)= 4\pi-\int_{p\in\Gamma_0^t} K(T_p\Gamma_0^t)\geq 4\pi.
$$
Thus 
$
\G(\Gamma_0)\geq4\pi.
$ 
Furthermore, we have $\G(\Gamma\cap\Gamma_0)=\G(\Gamma_0)$, as was observed by Kleiner \cite[p.\ 42--43]{kleiner1992}, see \cite[Prop.\ 6.6]{ghomi-spruck2022}.
Let
$\Gamma_+\subset\Gamma$ be the region where $GK_\Gamma\geq 0$, and define $\G_+(\Gamma):=\int_{\Gamma_+} GK_\Gamma$ to be the \emph{total positive curvature} of $\Gamma$. Then
$$
\tilde\G(\Gamma)\geq \G_+(\Gamma)\geq \G(\Gamma\cap\Gamma_0)=\G(\Gamma_0)\geq 4\pi,
$$
as desired.

\subsection{The case of equality}
To characterize the equality case in \eqref{eq:main}, suppose that $\tilde{\G}(\Gamma)= 4\pi$. Then equalities hold in the last displayed expression. In particular,
$$
4\pi
=
\G(\Gamma_0)
=
\lim_{t\to 0} \G(\Gamma_0^t)
=
4\pi- \lim_{t\to 0} \int_{p\in \Gamma_0^t} K(T_p\Gamma_0^t).
$$
So $\int_{p\in \Gamma_0^t} K(T_p\Gamma_0^t)\to 0$. This forces $K$ to vanish on tangent planes of $\Gamma_0$, which are well-defined by Proposition \ref{prop:convexhull}. Indeed, suppose towards a contradiction that $K(T_{p_0}\Gamma_0)<0$ for some $p_0\in\Gamma_0$. Then there exists a neighborhood $U\subset \Gamma_0$ of $p_0$ and a constant $\delta>0$ such that
$
K(T_p\Gamma_0)\le -2\delta
$
for all $p\in U$.
 Let $U_t\subset\Gamma_0^t$ be the neighborhoods obtained by the outward normal flow of $U$, and note that 
 $$
 \operatorname{area}(U_t)\geq \operatorname{area}(U)
 $$ 
 since the nearest point projection $\Gamma_0^t\to\Gamma_0$ is nonexpansive \cite[Cor.\ 2.5]{bridson-haefliger1999}. Then for all sufficiently small $t$, we have $
K(T_p\Gamma_0^t)\le -\delta
$
for all $p\in U_t$, since $\Gamma_0^t\to\Gamma_0$ in $\C^1$, $K$ is continuous, and $\Gamma_0$ is compact.
Hence
$$
\int_{p\in\Gamma_0^t} K(T_p\Gamma_0^t)
\leq
\int_{p\in U_t} K(T_p\Gamma_0^t)
\leq
-\delta\,\operatorname{area}(U_t)\leq-\delta\,\operatorname{area}(U),
$$
which is impossible, since  $\int_{p\in \Gamma_0^t} K(T_p\Gamma_0^t)\to 0$. So $K=0$ on tangent planes of $\Gamma_0$, as claimed.
Now by Theorem \ref{thm:extension}, $\conv(\Gamma)$ is flat and therefore is isometric to a convex body in $\R^3$. Under this isometry, $\Gamma$ corresponds to a closed surface $\Gamma'\subset\R^3$ with $\tilde\G(\Gamma')=\tilde\G(\Gamma)=4\pi$.
Hence, by Chern-Lashof's theorem, $\Gamma'$ is convex \cite[Thm.\ 3]{chern-lashof1957}. Therefore $\Gamma$ is convex. It follows that $\Gamma=\Gamma_0$, which completes the proof.

\subsection{Generalizations}
Here we improve the regularity requirement in Theorem \ref{thm:main}, and  discuss other refinements which extend the notion of tightness in Euclidean space.

\subsubsection{$\C^{1,1}$ surfaces}
In the argument above, we used smoothness of $\Gamma$ only to invoke Chern-Lashof's theorem. Since that result also holds for $\C^{1,1}$ surfaces,  so does Theorem \ref{thm:main}.
Indeed, let $f\colon \Gamma\to \R^3$ be a $\C^{1,1}$ immersion, for any closed surface $\Gamma$. Then the corresponding unoriented Gauss map
$
\ol\nu\colon \Gamma\to \RP^2
$
 is Lipschitz. Note also that $\ol\nu=\pi\circ \nu$ for any local Gauss map $\nu$, where $\pi\colon \S^2\to \RP^2$ is the covering map. So at every twice differentiable point of $f$,  the (unsigned) Jacobian  $\text{Jac}(\ol\nu)=\text{Jac}(\nu)=|GK|$ since $\pi$ is a local isometry. Hence,
by the area formula \cite[Thm.\ 3.2.3]{federer1969},
$$
\tilde \G(\Gamma)=\int_\Gamma \text{Jac}(\ol\nu)
=\int_{\RP^2} \#\big(\ol\nu^{\,-1}\big)\geq 4\pi,
$$
since the area of $\RP^2$ is $2\pi$, and $\#(\ol\nu^{\,-1})\geq2$. Indeed, if for $u\in\S^2$ we let $[u]:=\{u,-u\}\in\RP^2$, then
$\ol\nu^{\,-1}([u])$ is  the set of critical
points of the height function 
$
h_u(p):=\langle f(p),u\rangle,
$
which includes a minimum and a maximum by compactness of $\Gamma$.
Now if $\tilde \G(\Gamma)=4\pi$, then $\#(\ol\nu^{\,-1})=2$ almost everywhere, which yields that $h_u$ has exactly two critical points for almost every $u\in\S^2$. Then $\Gamma$ is homeomorphic to $\S^2$, by the generalized Reeb theorem \cite[Thm.\ 1']{milnor1964} \cite{mcauley1972}, and $f$ is a convex embedding by a result of Kuiper \cite[Thm.\ 4]{kuiper1970}.

\subsubsection{Total positive curvature and tightness}
  The proof of Theorem \ref{thm:main} actually shows that the total positive curvature $\G_+(\Gamma)\geq 4\pi$ with equality only if $K$ vanishes on the convex hull of $\Gamma$.
 Since  $\G_+(\Gamma)\leq\tilde\G(\Gamma)$, this is an improvement of the theorem.  Closed surfaces $\Gamma\subset\R^3$ with $\G_+(\Gamma)= 4\pi$ are called \emph{tight} \cite{cecil-chern1985}, since they minimize $\tilde\G$ in their topological class. Adopting the same terminology for Cartan-Hadamard manifolds, we may say that
\emph{all tight surfaces in $M^3$ lie in flat convex bodies}. 

We should also mention that for a closed surface $\Gamma$ of topological genus $g$  in $\R^3$, $\tilde{\G}(\Gamma)\geq 2\pi(2+2g)$ with equality precisely when $\Gamma$ is tight; however, Solanes \cite{solanes2007} constructed examples  in hyperbolic space $\mathbf{H}^3$ with $\tilde{\G}(\Gamma)< 2\pi(2+2g)$ for every genus $g\geq 1$. Thus  in Cartan-Hadamard manifolds minimizers of $\G_+$ and $\tilde \G$ no longer coincide, and $\G_+$ is indeed the right quantity for extending the notion of tightness.

\section*{Acknowledgements}
We thank Igor Belegradek, Fran\c{c}ois Fillastre, Misha Gromov, Ivan Izmestiev,  Reza Pakzad, Anton Petrunin, and  Andrzej \'{S}wi\k{e}ch for useful communications.

\bibliography{references}

\end{document}